\newcommand{\RR}[0]{\mathbb{R}}
\newcommand{\HH}[0]{\mathbb{H}}
\newcommand{\Z}{\mathbb{Z}}
\newcommand{\A}[0]{\mathcal{A}}
\newcommand{\PP}[0]{\mathcal{P}}
\newcommand{\R}{\mathbb{R}}
\renewcommand{\sl}{\mathrm{slope}}
\newcommand{\sing}{\mathrm{sing}}
\newcommand{\diam}{\mathrm{diam}}
\newcommand{\s}{\mathbf{s}}
\renewcommand{\l}{\lambda}
\newcommand{\LL}{{\mathcal L}}
\newcommand{\CH}{{\operatorname{CH}}}
\newtheorem{theorem}{Theorem}[section]
\newtheorem{lemma}[theorem]{Lemma}
\newtheorem{proposition}[theorem]{Proposition}
\newtheorem{corollary}[theorem]{Corollary}
\newtheorem*{claim*}{Claim}
\theoremstyle{definition}
\newtheorem{remark}[theorem]{Remark}
\newcommand{\FF}{\mathcal{F}}
\newcommand{\cR}{\mathcal{R}}
\newcommand{\union}{\cup}
\newcommand{\intersect}{\cap}
\newcommand{\boundary}{\partial}
\newcommand\tsim{\kern-.4em\sim}
\newcommand\til{\widetilde}
\newcommand\ep{\epsilon}
\newcommand\cT{{\mathcal T}}
\newcommand\thull{{\mathbf t}}
\newcommand\rhull{{\mathbf r}}
\newcommand\ssm{\smallsetminus}
\renewcommand{\int}{\mathrm{int}}
\newcommand{\side}[1]{\mathcal{D}_{#1}}
\newcommand{\openside}[1]{\mathcal{D}_{#1}^{\mathrm{o}}}
\long\def\restate#1#2{
\medskip\par\noindent
{\bf \Cref{#1}} 
{\it #2}
\par\medskip
}
\long\def\realfig#1#2{
  \begin{figure}[htbp]
    \begin{center}
\graphicspath{ {./onelipschitz/} }
\includegraphics {#1}
\caption[#1]{#2}
\label{#1}
    \end{center}
\end{figure}}
\begin{document}
\title{Fibered faces, veering triangulations, and the arc complex}
\author[Y.N. Minsky]{Yair N. Minsky}
\address{Department of Mathematics\\ 
Yale University}
\email{\href{mailto:yair.minsky@yale.edu}{yair.minsky@yale.edu}}
\author[S.J. Taylor]{Samuel J. Taylor}
\address{Department of Mathematics\\ 
Temple University}
\email{\href{mailto:samuel.taylor@temple.edu}{samuel.taylor@temple.edu}}
\date{\today}
\thanks{This work was partially supported by NSF grants DMS-1311844
and DMS-1400498.}
\maketitle

\begin{abstract}
We study the connections between subsurface projections in curve and
arc complexes in fibered
3-manifolds and Agol's veering triangulation. The main theme is that
large-distance subsurfaces in fibers are associated to large simplicial
regions in the veering triangulation, and this correspondence holds
uniformly for all fibers in a given fibered face of the Thurston norm.
\end{abstract}

\section{Introduction}
\label{intro}

Let $M$ be a 3-manifold fibering over the circle with fiber $S$ and
pseudo-Anosov monodromy $f$. The stable/unstable laminations
$\lambda^+,\lambda^-$ of $f$ give rise to a function on the essential
subsurfaces of $S$,
$$
Y \mapsto d_Y(\lambda^+,\lambda^-), 
$$
where $d_Y$ denotes distance in the curve and arc complex of $Y$
between the lifts of $\lambda^\pm$ to the cover of $S$ homeomorphic to $Y$. This distance
function plays an important role in the geometry of the mapping class
group of $S$ \cite{MM2,BKMM, masur2013geometry}, and in the hyperbolic 
geometry of the manifold $M$ \cite{ECL1, ELC2}.

In this paper we study the function $d_Y$ when $M$ is fixed
and the fibration is varied. The fibrations of a given manifold are
organized by the faces of the unit ball of Thurston's norm on
$H_2(M,\boundary M)$, where each {\em fibered face} $\FF$ has the
property that every irreducible integral class in the open cone $\RR_+\FF$ 
represents a fiber. 
There is a pseudo-Anosov flow which is transverse to 
every fiber represented by $\FF$, and whose stable/unstable
laminations $\Lambda^\pm\subset M$ intersect each fiber to give the laminations
associated to its monodromy. With this we note that
the projection distance $d_Y(\lambda^+,\lambda^-)$ can be defined for any
subsurface $Y$ of any fiber in $\FF$.  We use $d_Y(\Lambda^+,\Lambda^-)$ to
denote this quantity. 

Our main results give explicit connections between $d_Y$ and
the {\em veering triangulation} of $M$, introduced by Agol \cite{agol2011ideal} and refined by
Gu\'eritaud \cite{gueritaud}, with the main feature being that when $d_Y$ satisfies
explicit lower bounds, a thickening of $Y$ is realized as
an embedded subcomplex of
the veering triangulation. In this way, the ``complexity'' of the
monodromy $f$ is visible directly in the triangulation in a way that
is independent of the choice of fiber in the face $\FF$.
This is in contrast with the results of \cite{ELC2} in which the estimates
relating $d_Y$ to the hyperbolic geometry of $M$ are
heavily dependent on the genus of the fiber.

The results are cleanest in the setting of a {\em fully-punctured}
fiber, that is when the singularities of the monodromy $f$ are assumed
to be punctures of the surface $S$ (one can obtain such examples by
starting with any $M$ and puncturing the singularities and their
flow orbits). All fibers in a face $\FF$ are fully-punctured when any one
is, and in this case we say that $\FF$ is a {\em fully-punctured face.}
In this setting $M$ is a cusped manifold and the veering triangulation
$\tau$ is an ideal triangulation of $M$.

We obtain bounds on $d_W(\Lambda^+,\Lambda^-)$ that hold for $W$ in any
fiber of a given fibered face: 

\begin{theorem}[Bounding projections over a fibered
    face] \label{th:bounding projections}
Let $M$ be a hyperbolic 3-manifold with fully-punctured fibered face
$\FF$ and veering triangulation $\tau$.
For any essential subsurface $W$ of any fiber of $\FF$,
\[
\alpha \cdot (d_W(\Lambda^- ,\Lambda^+) -\beta) < |\tau|, 
\]
where $|\tau|$ is the number of tetrahedra in $\tau$, 
$\alpha = 1$ and $\beta = 10$ when $W$ is an annulus and
$\alpha = 3|\chi(W)|$ and $\beta = 8$ when $W$ is not an annulus.
\end{theorem}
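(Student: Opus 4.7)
The plan is to reduce the theorem to a structural correspondence, foreshadowed in the introduction, between large subsurface projections and embedded subcomplexes of $\tau$: whenever $d_W(\Lambda^-,\Lambda^+) > \beta$, one produces a subcomplex $\Sigma_W \subset \tau$ with $|\Sigma_W| \geq \alpha(d_W(\Lambda^-,\Lambda^+) - \beta)$, after which $|\tau| \geq |\Sigma_W|$ finishes the argument. If $d_W(\Lambda^-,\Lambda^+) \leq \beta$, the bound is trivial because $\alpha(d_W - \beta) \leq 0 < |\tau|$, so one may assume throughout that $d_W(\Lambda^-,\Lambda^+) > \beta$.

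For the embedding step, I would invoke the paper's main structural result, realizing a thickened neighborhood of $W$ in a fiber as an embedded subcomplex $\Sigma_W \subset \tau$. Using Gu\'eritaud's description of $\tau$ as the layered history of diagonal exchanges sweeping across any fiber, the tetrahedra of $\Sigma_W$ should correspond to flips between successive ideal triangulations of $W$ encountered between the triangulations adapted to $\Lambda^-$ and $\Lambda^+$. The essential content of this step is that the correspondence is well-defined independently of the choice of fiber used to present $W$, since $\tau$ depends only on the fibered face $\FF$.

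For the counting step, I would bound $|\Sigma_W|$ from below separately in the two cases. When $W$ is an annulus, $d_W$ is essentially a twist parameter about the core of $W$, and $\Sigma_W$ takes the form of a chain of tetrahedra spiraling around that core; each unit of twist should contribute a tetrahedron, giving $|\Sigma_W| \geq d_W - 10$ once the end effects at the top and bottom of the spiral are absorbed into the additive constant. When $W$ is not an annulus, ideal triangulations of $W$ have $-2\chi(W) = 2|\chi(W)|$ triangles and $-3\chi(W) = 3|\chi(W)|$ edges, and $\Sigma_W$ is stratified by such triangulations with consecutive strata differing by a single edge flip (one tetrahedron); a counting argument relating arc-complex distance in $W$ to the number of edge flips needed across a triangulation of $W$ should then yield $|\Sigma_W| \geq 3|\chi(W)|(d_W - 8)$.

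The hard part will be the embedding step, since locating $\Sigma_W$ inside the globally-defined $\tau$, while maintaining uniform control as the fiber varies within $\FF$, is the conceptual core of the paper: one must match local combinatorics of arc and curve complex projections, which naturally live on a chosen fiber, with tetrahedra of a single fiber-independent triangulation. Once the embedding is produced, the counting reduces to combining the Euler-characteristic formulas for ideal triangulations of $W$ with an elementary estimate for how subsurface projections can change under a flip.
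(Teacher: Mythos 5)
Your proposal is correct and follows essentially the same route as the paper: the embedded subcomplex $\Sigma_W$ is exactly the isolated pocket $V_W$ furnished by \Cref{lem:iso_pocket} and embedded in $M$ by \Cref{thm:pocket summary}, and the count of tetrahedra as diagonal flips between the top and bottom triangulations, measured against the number of interior edges of a triangulation of $W$, is the paper's counting argument. The only detail to tighten is the strict inequality: the paper uses that each triangulation of $W$ has at least $3|\chi(W)|+1$ non-boundary edges (together with the fact that a triangulation has diameter $1$ in $\A(W)$) to pass from $d_W(V^-,V^+)\ge d_W(\Lambda^-,\Lambda^+)-\beta$ to the strict bound $\alpha\cdot(d_W(\Lambda^-,\Lambda^+)-\beta)<|\tau|$.
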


Note that
this means that $d_W \le |\tau| +10$ for each subsurface $W$,
no matter which fiber $W$ lies in.
Further, the complexity $|\chi(W)|$ of any subsurface $W$ of any fiber
of $\FF$ with $d_W(\Lambda^+,\Lambda^-) \ge 9$ is also bounded in 
terms of $M$ alone. 

In addition, given one fiber with a collection of subsurfaces
of large $d_Y$, we obtain control over the appearance of
high-distance subsurfaces in all other fibers:

\begin{theorem}[Subsurface dichotomy] \label{th:sub_dichotomy_fully_punctured}
Let $M$ be a hyperbolic 3-manifold with fully-punctured fibered face
$\FF$ and suppose that $S$ and $F$ are each fibers in $\R_+\FF$.
If $W$ is a subsurface of $F$, then either $W$ is isotopic
along the flow to a subsurface of $S$, or
$$3|\chi(S)| \ge d_W(\Lambda^-,\Lambda^+) -\beta,$$
where $\beta =10$ if $W$ is an annulus and $\beta = 8$ otherwise.
\end{theorem}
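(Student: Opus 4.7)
The plan is to refine the argument of \Cref{th:bounding projections} by replacing the global bound involving $|\tau|$ with a local bound coming from how the fiber $S$ meets the region of $\tau$ associated to $W$. I would first recall that the proof of \Cref{th:bounding projections} produces, from a subsurface $W$ with $d := d_W(\Lambda^-,\Lambda^+)$ large, an ordered sequence of tetrahedra of $\tau$ of length at least $d - \beta$ (widened by a factor of $3|\chi(W)|$ when $W$ is non-annular). The crucial geometric feature of this sequence is that consecutive tetrahedra are stacked along the transverse pseudo-Anosov flow $\Phi_t$, so the sequence is monotone in the flow direction.

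Next I would bring in the fiber $S \in \R_+\FF$. Since $S$ is a transverse cross-section of $\Phi_t$, it is carried by the upper branched surface of $\tau$, and the number of tetrahedra $S$ crosses is controlled linearly by $|\chi(S)|$, with the precise constant being $3$. The dichotomy would then proceed as follows. Let $N$ denote the flow-saturation in $M$ of the ordered tetrahedral sequence. If the flow-saturation of $W$ fits inside a single fundamental domain of $\Phi_t|_N$ together with $S$, then $W$ can be pushed along the flow lines until it lies in $S$, giving the required flow-isotopy of $W$ onto a subsurface of $S$. Otherwise, $S$ must cut through $N$ in a way that — by flow-monotonicity of the sequence — transversely intersects every tetrahedron of the sequence. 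Summing the Euler-characteristic contributions yields $|\chi(S)| \geq (d - \beta)/3$, which is exactly $3|\chi(S)| \geq d_W(\Lambda^-,\Lambda^+) - \beta$.

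The hard part will be the rigorous verification of the dichotomy in its second case: showing that failure of flow-isotopy of $W$ into $S$ really does force $S$ to cross every tetrahedron in the $W$-sequence, rather than only a fraction of them. Because $S$ is a genuine global cross-section of the flow and the sequence is monotone along $\Phi_t$, once $S$ enters $N$ between any two consecutive tetrahedra of the sequence it cannot avoid crossing all subsequent ones without doubling back against the flow — which a transverse cross-section is precisely prohibited from doing. Making this precise while simultaneously handling the annular case ($\beta = 10$) and the non-annular case ($\beta = 8$) uniformly, both with the common factor $3$ from the Euler-characteristic accounting, is the technical heart of the argument.
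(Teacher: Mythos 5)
Your proposal has a genuine gap in each branch of the dichotomy. For the first branch, the condition that ``the flow-saturation of $W$ fits inside a single fundamental domain of $\Phi_t|_N$ together with $S$'' is neither precisely defined nor shown to be complementary to the second case, and the actual content of the first alternative is missing: if $S$ can be pushed off $W$, \Cref{lem:flow_to_fiber_2} only yields $\pi_1(W)\le\pi_1(S)$, which a priori gives an \emph{immersed} (covered) subsurface of $S$, not an embedded one. Upgrading this to ``$W$ is isotopic along the flow to a subsurface of $S$'' is exactly the role of \Cref{lem:embedding_fullly_punctured} (immersion to cover), which requires LERF, passage to a finite cover, and the section/sweeping machinery, followed by a finite-index argument using torsion-freeness of $\Z$ and the class dual to $F$. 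Simply ``pushing $W$ along flow lines into $S$'' presupposes that the flow projection of $W$ to $S$ is injective, which is precisely what needs proof.

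For the second branch, the counting mechanism is not justified and does not match the asserted inequality. The tetrahedra realized by the diagonal-exchange sequence in the pocket are not stacked monotonically along single flow lines, so the claim that a transverse fiber entering the region must cross \emph{every} tetrahedron of the sequence does not follow from transversality alone; moreover there is no bound of the form ``$S$ crosses at most $3|\chi(S)|$ tetrahedra'' --- the number $3|\chi(S)|$ counts the \emph{edges} of an ideal triangulation of $S$, not tetrahedra met. Note also that the sequence has length at least $3|\chi(W)|\,(d_W-\beta)$ in the nonannular case, so your scheme would produce an inequality involving $|\chi(W)|$ as well, not the stated $3|\chi(S)|\ge d_W(\Lambda^-,\Lambda^+)-\beta$. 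The paper's argument instead uses \Cref{prop:invariance} to realize $S$ simplicially in $(M,\tau)$, embeds the isolated pocket $V_W$ in $M$ via \Cref{thm:pocket summary}, sweeps $V_W$ by sections $W_i$ differing by single flips, and shows that after discarding ``removable'' intersection components (a subtlety your argument ignores, since local intersections can be isotoped away) the edge sets $E_i=f(S)\cap W_i$ are nonempty and consecutive ones share an edge; counting the distinct edges, all drawn from the $3|\chi(S)|$ edges of the triangulation of $S$, gives $3|\chi(S)|\ge d_W(V_W^-,V_W^+)\ge d_W(\Lambda^-,\Lambda^+)-\beta$ by \Cref{lem:iso_pocket}.
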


One can apply this theorem with $S$ taken to be the
smallest-complexity fiber in $\FF$. In this case there is some finite
list of ``large'' subsurfaces of $S$, and 
for all other fibers and all subsurfaces $W$ with $d_W$ sufficiently large,
$W$ is already accounted for on this finite list. 

For a sample
application of \Cref{th:sub_dichotomy_fully_punctured},
let $W$ be an essential annulus with core 
curve $w$ in a fiber $F$ of $M$ 
 and suppose that $d_W(\Lambda^-,\Lambda^+) \ge K$ for some $K > 10$. (We note
that it is easy to construct explicit examples  of $M$ with $d_W(\Lambda^-,\Lambda^+)$ 
as large as one wishes by starting with a pseudo-Anosov homeomorphism of $F$ 
with large twisting about the curve $w$.) If $w$ is trivial
in $H_1(M)$, then \Cref{th:sub_dichotomy_fully_punctured} 
(or more precisely \Cref{always subsurface}) implies that $w$ is actually isotopic
to a simple closed curve in \emph{every} fiber in the open cone $\RR_+\FF$ containing
$F$. When $w$ is nontrivial in $H_1(M)$ it determines a codimension-$1$ hyperplane
$P_w$ in $H^1(M) = H_2(M,\partial M)$ consisting of cohomology classes which 
vanish on $w$. For each fiber $S$ of $\RR_+\FF$ either $S$ is contained in $P_w$,
in which case $w$ is isotopic to a simple closed curve in $S$ as before, or $S$ lies 
outside of $P_w$ and $|\chi(S)| \ge \frac{K-10}{3}$.
We remark that the second alternative is non-vacuous so long as
$H^1(M)$ has rank at least 2.

The general (non-fully-punctured) setting is also approachable with
our techniques, but a number of complications arise and 
the connection to the veering triangulation of the fully-punctured manifold
 is much less explicit.
 An extension of the results in this paper to the 
general setting will be the subject of a subsequent paper.

\subsection*{Pockets in the veering triangulation}
When $Y$ is a subsurface of a fiber $X$ in $\FF$ and 
$d_Y(\Lambda^+,\Lambda^-)>1$, we show (\Cref{thm: tau-compatible}) 
that $Y$ is realized simplicially in the veering triangulation lifted
to the cover $X\times\R$. If $d_Y(\Lambda^+,\Lambda^-)$ is even larger then
this realization can be  thickened to a ``pocket'', which is a
simplicial region bounded by two isotopic copies of $Y$. With
sufficiently large assumptions this pocket can be made to embed in $M$
as well, and this is our main tool for connecting arc complexes to the
veering triangulation and establishing Theorems \ref{th:bounding
  projections} and \ref{th:sub_dichotomy_fully_punctured}:

\begin{theorem}\label{thm:pocket summary}
  Suppose $Y$ is a subsurface of a fiber $X$ with
  $d_Y(\lambda^-,\lambda^+) > \beta$, where $\beta=8$ if $Y$ is
  nonannular and $\beta=10$ if $Y$ is an annulus.
Then there is an embedded simplicial pocket $V$ in $M$ isotopic to a
thickening of $Y$, and
with $d_Y(V^-,V^+) \ge d_Y(\lambda^-,\lambda^+) - \beta$.
\end{theorem}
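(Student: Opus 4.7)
The plan is to assemble the pocket in three stages: realize $Y$ simplicially in the infinite cyclic cover, thicken this realization to a simplicial region bounded by two isotopic copies of $Y$, and then argue that the resulting pocket projects injectively to $M$. The first stage is immediate from the earlier theorem: since $d_Y(\lambda^-,\lambda^+)>\beta\ge 1$, \Cref{thm: tau-compatible} produces a simplicial (or annular) realization $Y_0$ of $Y$ in the lifted veering triangulation $\widetilde{\tau}$ on $X\times\R$.

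For the second stage, I would use the flow direction on $X\times\R$ to push $Y_0$ through successive layers of $\widetilde\tau$, producing simplicial copies $Y^-$ below and $Y^+$ above $Y_0$. The region $V$ between $Y^-$ and $Y^+$ is then a union of tetrahedra of $\widetilde\tau$ homeomorphic to $Y\times I$ and is a simplicial pocket in the sense of the paper. Its two boundary surfaces $V^\pm$ each carry laminations recorded by the tetrahedra of $V$ meeting $\boundary V$, which gives meaning to $d_Y(V^-,V^+)$. The combinatorial ``thickness'' required to actually fit such parallel copies $Y^\pm$ into the cover is where the hypothesis $d_Y(\lambda^-,\lambda^+)>\beta$ first gets used: if $d_Y$ were too small, there would not be enough layers of $\widetilde\tau$ straddling $Y_0$ to carry distinct simplicial translates.

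The main obstacle is the third stage: showing that the pocket $V$ embeds in $M$ once $d_Y(\lambda^-,\lambda^+)>\beta$. My approach would be by contradiction. If the covering projection $V\to M$ is not injective, then some nontrivial deck transformation $\phi$ sends a tetrahedron of $V$ to another tetrahedron of $V$, which forces the translate $\phi(V^\pm)$ to interlock combinatorially with $V^\mp$ inside $\widetilde\tau$. Concatenating translates then exhibits simplicial paths in $\widetilde\tau$ whose $Y$-projections stay within uniformly bounded distance; the total $Y$-projection distance between $\lambda^-$ and $\lambda^+$ is thus controlled by a universal constant, contradicting $d_Y(\lambda^-,\lambda^+)>\beta$. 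Tracking the veering combinatorics carefully enough to extract the precise constant $\beta$ is the most delicate part of the argument. Finally, the inequality $d_Y(V^-,V^+)\ge d_Y(\lambda^-,\lambda^+)-\beta$ would follow from showing that each of the projections of $V^\pm$ to $\A(Y)$ (or $\C(Y)$ in the annular case) lies within $\beta/2$ of the corresponding projection of $\lambda^\pm$, via a combinatorial analysis of the tetrahedra of $\widetilde\tau$ meeting $\boundary Y$: two applications of the triangle inequality in the arc or curve complex then yield the stated bound.
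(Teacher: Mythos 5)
Your stages 1--2 follow the paper (via \Cref{thm: tau-compatible} and the pocket structure of \Cref{Y pocket}), but stage 3 — the embedding — contains the real gap, and the way you propose to close it would not work. The deck group of $X\times\R\to M$ is generated by the monodromy lift $\Phi$, and the translate $\Phi^i(V)$ is not a vertically stacked copy of a pocket over $Y$: it is a pocket over the \emph{different} subsurface $f^i(Y)$, which overlaps $Y$ sideways in $X$. Because of this, an intersection $V\cap\Phi^i(V)$ does not force any universal bound on $d_Y(\lambda^-,\lambda^+)$, so your proof by contradiction has nothing to contradict. Indeed, the ``maximal'' pocket between the top and bottom sections containing $\partial_\tau Y$ will in general meet its translates near its top and bottom no matter how large $d_Y(\lambda^-,\lambda^+)$ is; largeness of the projection distance alone does not make the naive pocket embed. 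What an intersection \emph{does} force, via \Cref{near lambda}, is that any shared edge not lying on $\partial_\tau Y\cup\partial_\tau f^i(Y)$ has $d_Y$-distance at most about $2$ from $\lambda^+$ or from $\lambda^-$.

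The missing idea is therefore the \emph{isolation} condition: one must trim the maximal pocket to a subpocket $V$ all of whose interior edges are at $d_Y$-distance at least $3$ (nonannular case; $4$ for annuli) from \emph{both} $\lambda^+$ and $\lambda^-$, while keeping the top and bottom of $V$ close enough to $\lambda^{\pm}$ that the triangle inequality still gives $d_Y(V^-,V^+)\ge d_Y(\lambda^-,\lambda^+)-\beta$; this is exactly what \Cref{lem:iso_pocket} accomplishes, and it is where the constant $\beta=2c+2$ comes from (the loss is the amount trimmed at the two ends, not a symmetric ``$\beta/2$ on each side'' comparison as you suggest). With isolation in hand, disjointness from the translates $\Phi^i(V)$ — which are isolated pockets of $f^i(Y)$, subsurfaces that are disjoint from or overlap $Y$ but cannot be nested in it — is the content of \Cref{prop:disjoint_pockets}: any edge common to two such pockets and not on their $\tau$-boundaries would be too close to $\lambda^\pm$ to satisfy the isolation condition. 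Your sketch omits both the isolation requirement in stage 2 and this mechanism in stage 3, and without them the asserted embedding does not follow.
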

In this statement, $V^+$ and $V^-$ refer to the triangulations of the
top and bottom surfaces of the pockets, regarded as simplices in
the curve and arc complex $\A(Y)$. Also,  $d_Y(V^-,V^+)$ denotes
 the smallest $d_Y$-distance between an arc of $V^ -$ and an arc of $V^+$.\\

The veering triangulation in fact
recovers a number of aspects of the geometry of curve
and arc complexes in a fairly concrete way. As an illustration we prove

\begin{theorem}\label{th:total geodesic}
In the fully punctured setting, the arcs of the veering triangulation
form a geodesically connected subset $\A(\tau)$ of the curve and arc graph, in the sense that any two points in $\A(\tau)$ are connected by a geodesic that lies in $\A(\tau)$. 
\end{theorem}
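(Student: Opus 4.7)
The plan is to exploit the bi-infinite flip sequence $\{T_k\}_{k \in \Z}$ of ideal triangulations of the fiber $S$ dual to the veering triangulation, as given by Gu\'eritaud's construction. Each $T_k$ is a maximal simplex of the arc complex $\A(S)$ whose edges all lie in $\A(\tau)$; consecutive triangulations $T_k, T_{k+1}$ differ by a single flip and hence share a common codimension-one sub-simplex; and $\A(\tau)$ is precisely the set of arcs appearing as an edge of some $T_k$. Each $a \in \A(\tau)$ therefore has a nonempty ``lifespan'' interval $I_a = \{k : a \in T_k\}$.

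I would argue by induction on $n := d_{\A(S)}(a,b)$ for $a, b \in \A(\tau)$. The cases $n \le 1$ are immediate. For $n \ge 2$, it suffices to produce some $c \in \A(\tau)$ that is disjoint from $a$ and satisfies $d(c,b) = n - 1$; applying induction to $(c,b)$ and prepending $a$ then yields the desired geodesic in $\A(\tau)$. The natural candidates for $c$ are the other edges of the triangulations $T_k$ with $k \in I_a$, together with the new arc that replaces $a$ under the flip destroying $a$ at either endpoint of $I_a$. Each such candidate is disjoint from $a$ and lies in $\A(\tau)$.

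The main obstacle is verifying that at least one candidate actually realizes $d(c,b) = n-1$, rather than merely $d(c,b) \le n$. My proposed approach, when $n$ exceeds the threshold $\beta$ of \Cref{thm:pocket summary}, is to invoke that theorem with $Y = S$: the distance between $a$ and $b$ is then witnessed by a simplicial pocket $V \subset M$ whose top and bottom triangulations $V^\pm$ lie in $\A(\tau)$ and realize the distance up to an additive error, and inside $V$ the internal flip sequence produces a path through $\A(\tau)$ that can be tightened into a geodesic by a standard surgery on successive triangulations. For small $n$ (below the pocket threshold) the question is purely combinatorial and should be settled directly from the veering structure of the finitely many tetrahedra whose quadrilaterals meet $a$ or $b$.

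The hard part will be executing the surgery inside the pocket without leaving $\A(\tau)$: one must show that every intermediate arc produced by removing redundant flips still belongs to the veering flip sequence. This is plausible because the pocket itself is assembled from veering tetrahedra, so its internal edges are all in $\A(\tau)$ by construction, but care is needed at the boundary where the pocket meets the rest of the triangulation.
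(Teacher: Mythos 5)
Your induction scheme does not close, and the step you yourself flag as ``the hard part'' is in fact the entire content of the theorem. Concretely: to run the induction you must produce $c\in\A(\tau)$ disjoint from $a$ with $d(c,b)=n-1$, and your only tool for this is \Cref{thm:pocket summary} applied ``with $Y=S$.'' That theorem does not apply there and does not say what you need. Its hypotheses and conclusions are about the subsurface projection distance $d_Y(\lambda^-,\lambda^+)$ for an \emph{essential subsurface} $Y\subset X$ (the construction runs through $\partial_\tau Y$, sections containing $\partial_\tau Y$, and isolated pockets); for $Y=S$ there is no $\tau$-boundary, and $\lambda^\pm$ are filling laminations, so $d_S(\lambda^-,\lambda^+)$ is not a finite quantity comparable to $d_{\A(S)}(a,b)$. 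More fundamentally, the distance between two particular $\tau$-edges $a,b$ has no a priori relation to any $d_W(\lambda^-,\lambda^+)$, so neither the pocket machinery nor the flip sequence ``witnesses'' $d(a,b)$. Even granting a flip path of triangulations from one containing $a$ to one containing $b$, that only gives a (typically very long) path in $\A(\tau)$; the proposed ``standard surgery'' tightening it to a geodesic while staying in $\A(\tau)$ is not standard at all --- arbitrary arc surgeries produce saddle connections that need not span singularity-free rectangles, hence need not be $\tau$-edges --- and no argument is offered for why intermediate arcs remain in the veering flip sequence. The small-$n$ cases are likewise waved at rather than proved.

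The paper's proof avoids all of this by a different mechanism: it builds a coarse $1$-Lipschitz retraction $\rhull\circ\s\colon\A(X)\to\A(\tau)$ that is the identity on the vertices of $\A(\tau)$. Here $\s$ sends an arc to the saddle connections of its $q$-geodesic representative (its Lipschitz property is \Cref{saddle_proj}, resting on \Cref{cor:side_coherence}), and $\rhull$ sends saddle connections to $\tau$-edges via rectangle hulls (its Lipschitz property is \Cref{r disjoint}). Given $a,b\in\A(\tau)$, one then takes \emph{any} geodesic between them in $\A(X)$ and pushes it into $\A(\tau)$ by the retraction without increasing its length; since the endpoints are fixed, the image contains a geodesic in $\A(\tau)$. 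If you want to salvage your approach, you would need a substitute for this retraction --- some device that converts an arbitrary arc (or an arbitrary geodesic in $\A(X)$) into nearby $\tau$-edges in a distance-non-increasing way --- and nothing in your proposal supplies it.
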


\subsection*{Hierarchies of pockets}

One is naturally led to generalize \Cref{thm:pocket summary} from a
result embedding one pocket at a time to a description of all pockets
at once. Indeed \Cref{prop:disjoint_pockets} tells us that whenever
subsurfaces $Y$ and $Z$ of $X$ have large enough projection distances and are
not nested,
they have associated pockets $V_Y$ and $V_Z$ which are disjoint in 
$X \times \RR$.
These facts, taken together with
\Cref{th:total geodesic}, strongly suggest that the veering triangulation $\tau$ 
encodes the hierarchy of curve complex geodesics between $\lambda^\pm$ as 
introduced by Masur-Minsky in \cite{MM2}. We expect that, using a
version of \Cref{th:total geodesic} that applies to subsurfaces and
adapting the notion of ``tight geodesic'' from \cite{MM2}, one can
carry out a hierarchy-like construction within the veering triangulation
and recover much of the structure found in \cite{MM2}, with more
concrete control, at least in the 
fully-punctured setting. We plan to explore this approach in future
work. 

\subsection*{Related and motivating work}
The theme of using fibered 3-manifolds to study infinite families of
monodromy maps is deeply explored in McMullen \cite{mcmullen2000polynomial} and
Farb-Leininger-Margalit \cite{farb-leininger-margalit}, where the focus is on 
Teichm\"uller translation distance. 

Distance inequalities analogous to 
\Cref{th:sub_dichotomy_fully_punctured}, in the setting of Heegaard
splittings rather than surface bundles, appear in Hartshorn \cite{hartshorn}, 
and then more fully
in Scharlemann-Tomova \cite{scharlemann-tomova}. Bachman-Schleimer
\cite{BSc} use Heegaard surfaces to give bounds on the
curve-complex translation distance of the monodromy of a fibering. All
of these bounds apply to entire surfaces and not to subsurface
projections. In Johnson-Minsky-Moriah
\cite{johnson-minsky-moriah:subsurface}, subsurface projections are
considered in the setting of Heegaard splittings. A basic difficulty
in these papers which we do not encounter here is the compressibility of
the Heegaard surfaces, which makes it tricky to control essential
intersections. On the other hand,
unlike the surfaces and handlebodies that are used to obtain control
in the Heegaard setting, the foliations we consider here are infinite
objects, and the connection between them and finite arc systems in the
surface is a priori dependent on the fiber complexity. The veering
triangulation provides a finite object that captures this connection
in a more uniform way. 

The totally-geodesic statement of \Cref{th:total geodesic} should be compared to 
Theorem 1.2 of Tang-Webb \cite{tang-webb}, in which Teichm\"uller disks give rise to
quasi-convex sets in curve complexes. While the results of
Tang-Webb are more general, they are coarse, and it is interesting
that in our setting a tighter statement holds. Finally, we note that work by several authors
has focused on geometric aspects of the veering triangulation, including \cite{hodgson2011veering,futer2013explicit,hodgson2016non}.

\subsection*{Summary of the paper}
In \Cref{background} we set some notation and give Gu\'eritaud's
construction of the veering triangulation.
We also recall basic facts
about curve and arc complexes, subsurface projections and Thurston's
norm on homology. We spend some time in this section describing the
flat geometry of a punctured surface with an integrable
holomorphic quadratic
differential, and in particular giving an explicit description of the
circle at infinity of its universal cover (\Cref{same compactification}). 
While this is a fairly familiar picture, some delicate issues arise
because of the incompleteness of the metric at the punctures.

In \Cref{sections} we study {\em sections} of the veering
triangulations, which are simplicial surfaces isotopic to $X$ in the
cover $X\times\R$, and transverse to the suspension flow of the
monodromy. These can be thought of as triangulations of the surface
$X$ using only edges coming from the veering triangulation.
We prove \Cref{lem:extension} which says that a partial triangulation
of $X$ using only edges from $\tau$ can always be extended to a full section, and
\Cref{prop:connect} which says that any two extensions of a partial
triangulation are connected by a sequence of ``tetrahedron
moves''. This is what allows us to define and study the ``pockets''
that arise between any two sections.

In \Cref{hulls} we define a simple but useful construction, rectangle and
triangle hulls, which map saddle connections in our surface to unions
of edges of the veering triangulation. An immediate consequence of
the properties of these hulls is a proof of \Cref{th:total
  geodesic}.

In \Cref{surface_reps} we apply the flat geometry developed in
\Cref{background} to control the convex hulls of subsurfaces of the
fiber, and then use \Cref{hulls} to construct what we call
$\tau$-hulls, which are representatives of the homotopy class of a
subsurface that are simplicial
with respect to the veering triangulations. \Cref{thm: tau-compatible}
states that quite mild assumptions on $d_Y(\lambda^+,\lambda^-)$ imply
that the $\tau$-hull of $Y$ has embedded interior. The idea here is that any
pinching point of the $\tau$-hull is crossed by leaves of $\lambda^+$ and
$\lambda^-$ that intersect each other very little.
The main results of both \Cref{hulls} and \Cref{surface_reps} apply 
in a general setting and do not require that the surface $X$ be fully-punctured.

In \Cref{pockets} we put these ideas together to prove our main
theorems for fibered manifolds with a fully-punctured fibered face.
In \Cref{Y pocket} we describe the maximal
pocket associated to a subsurface $Y$ with $d_Y(\Lambda^+,\Lambda^-)$
sufficiently large (greater than 2, for nonannular $Y$).
We then introduce the notion of an {\em isolated pocket},
which is a subpocket of the maximal pocket 
that has good embedding properties in the manifold $M$. The existence
and embedding properties of these pockets are established in
\Cref{lem:iso_pocket} and \Cref{prop:disjoint_pockets},  which
together allow us to prove \Cref{thm:pocket summary}. 

From here, a simple counting argument gives \Cref{th:bounding projections}: the size of the
embedded isolated pockets is bounded from below in terms of
$d_Y(\Lambda^+,\Lambda^-)$ and $\chi(Y)$, and from above by the total number
of veering tetrahedra.

To obtain \Cref{th:sub_dichotomy_fully_punctured}, we use the pocket
embedding results to show that, if $Y$ is a subsurface of one fiber
$F$ and $Y$ essentially intersects another fiber $S$, then $S$ must
cross every level surface of the isolated pocket of $Y$, and hence the
complexity of $S$ gives an upper bound for $d_Y(\Lambda^+,\Lambda^-)$. To
complete the proof we need to show that, if $Y$ does not essentially
cross $S$, it must be isotopic to an embedded (and not merely immersed)
subsurface of $S$. This is handled by
\Cref{lem:embedding_fullly_punctured}, which may be of independent
interest. It gives a uniform upper bound for $d_Y(\Lambda^+,\Lambda^-)$ when
$Y$ corresponds to a finitely generated subgroup of $\pi_1(S)$, unless
$Y$ covers an embedded subsurface.

\subsection*{Acknowledgments}
The authors are grateful to Ian Agol and Fran\c{c}ois Gu\'eritaud for explaining their work to us. We also thank Tarik Aougab, Jeff Brock, and Dave Futer for helpful conversations and William Worden for pointing out some typos in an earlier draft. Finally, we thank the referee for a thorough reading of our paper and comments which improved its readability.

\newcommand{\Fr}{\operatorname{Fr}}
\section{Background}
\label{background}

The following notation will hold 
throughout the paper.
Let $\bar X$ be a closed Riemann surface
with an integrable
meromorphic quadratic differential $q$.
We remind the reader that $q$ may have poles of order $1$.
 We denote the vertical and
horizontal foliations of $q$ by $\lambda^+$ and $\lambda^-$
respectively.  Let $\PP$ be a finite subset of $\bar X$
that includes the poles of $q$ if any, and let
$X = \bar X \ssm \PP$.
Let $\sing(q)$ denote the union of $\PP$ with the set of zeros
of $q$.
We require further that $q$ has no horizontal or
vertical saddle connections, that is no leaves of $\lambda^\pm$ that
connect two points of $\sing(q)$.
This situation holds in particular if
$\lambda^\pm$ are the stable/unstable foliations of a pseudo-Anosov
map $f:X\to X$, which will often be the case for us.
If $\PP=\sing(q)$ (i.e. $\PP$ contains all zeros of $q$)
we say $X$ is {\em fully-punctured}.

Let $\hat X$ denote the metric completion of the universal cover $\til
X$ of $X$,  and note that there is an infinite branched covering
$\hat X \to \bar X$, infinitley branched over the points of $\PP$. The
preimage $\hat\PP$ of $\PP$ is the set of completion points.
The space $\hat X$ is a complete CAT$(0)$ space with the metric induced by
$q$.

\subsection{Veering triangulations}
\label{veering defs}
In this section let  $\PP=\sing(q)$. 
The veering triangulation, originally defined by Agol in
\cite{agol2011ideal} in the case 
where $q$ corresponds to
a pseudo-Anosov $f:X\to X$,  is an ideal
layered triangulation of $X\times\R$ which projects to a triangulation
of the mapping torus $M$ of $f$. The
definition we give here is due to Gu\'eritaud \cite{gueritaud}. 
(Agol's ``veering'' property itself will not actually play a role in
this paper, so we will not give its definition).

A {\em singularity-free rectangle} in $\hat X$ is an embedded rectangle whose
edges consist of leaf segments of the lifts of $\lambda^\pm$ and whose
interior contains no singularities of $\hat X$. 
If $R$ is a {\em maximal} singularity-free rectangle in  $\hat X$ then
it must contain a singularity on each edge. Note that there cannot be
more than one singularity on an edge since $\lambda^\pm$ have no
saddle connections.
We associate to $R$ an ideal tetrahedron whose vertices are $\boundary R
\intersect \hat\PP$, as in \Cref{gue-tetra}. 
This tetrahedron comes equipped with a ``flattening'' map into $\hat X$ as pictured.

\realfig{gue-tetra}{A maximal singularity-free rectangle $R$ defines a
  tetrahedron equipped with a map into $R$.}

The tetrahedron comes with a natural orientation, inherited from the
orientation of $\hat X$ using the convention that the edge connecting
the horizontal boundaries of the rectangle lies {\em above} the edge
connecting the vertical boundaries. This orientation is indicated in \Cref{gue-tetra}.

The union of all these ideal tetrahedra, with faces identified whenever they
map to the same triangle in $\hat X$, is Gu\'eritaud's construction of
the veering triangulation of $\til X \times \R$.

\begin{theorem}\label{gueritaud construction} {\rm\cite{gueritaud}}
Suppose that $X$ is fully-punctured.
The complex of tetrahedra associated to maximal rectangles of $q$ is
an ideal triangulation $\til\tau$ of $\til X\times \R$, and the maps of
tetrahedra to their defining rectangles piece together to a fibration
$\pi:\til X \times \R \to \til X$. The action of $\pi_1(X)$ on $(\til
X,\til q)$ lifts simplicially
to  $\til\tau$, and equivariantly with respect to $\pi$.
The quotient is
a triangulation of $X \times \R$.

If $q$  
corresponds to a pseudo-Anosov $f:X\to X$
then the action of $f$ on $(X,q)$ lifts simplicially and
$\pi$-equivariantly to $\Phi:X\times\R\to X\times\R$.
The quotient is
a triangulation $\tau$ of the mapping torus $M$. The fibers of $\pi$
descend to flow lines for the suspension flow of $f$.
\end{theorem}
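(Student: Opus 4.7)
The plan is to follow Gu\'eritaud's construction, which builds $\til\tau$ by analyzing how singularity-free rectangles in $\til X$ evolve under one-parameter deformations. First I would record the tetrahedron combinatorics: because $q$ has no horizontal or vertical saddle connections, a maximal singularity-free rectangle $R \subset \til X$ has exactly one singularity interior to each of its four sides, so the four ideal vertices of $T_R$ are well defined. The flattening map $T_R \to R$ collapses two opposite edges of $T_R$ onto the horizontal and vertical leaf segments joining opposite singularities of $\partial R$, giving the "top" and "bottom" edges in the orientation convention; the remaining four edges project to paths along $\partial R$, and each of the four faces of $T_R$ maps to a triangular region of $R$.

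The key step is to organize how distinct tetrahedra glue. I would consider the one-parameter deformation of rectangles through a fixed non-singular point $x \in \til X$ in which the aspect ratio varies monotonically. At a discrete sequence of critical parameters the rectangle becomes maximal (a new singularity appears on a previously empty side), and between consecutive critical parameters the rectangle has only two singularities on its boundary. The core combinatorial claim I would verify is that two consecutive critical rectangles share three of their four boundary singularities, and the associated tetrahedra glue across the triangular face corresponding to these three shared vertices; in the flattened pictures this is exactly the diagonal flip of a singularity-free quadrilateral.

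With this local picture established, the global triangulation of $\til X \times \R$ is assembled by stacking tetrahedra along the time axis: at each generic time the rectangles through all non-singular points tile $\til X$ by their diagonal leaf segments, giving a triangulation of $\til X$ with ideal vertex set $\hat \PP$, and at each critical time a single tetrahedron is inserted to realize the local flip between the triangulations just before and just after. The fibration $\pi$ is the projection that forgets the time coordinate, and it is consistent tetrahedron-by-tetrahedron because each tetrahedron projects onto its defining rectangle. Equivariance under $\pi_1(X)$ follows from the equivariance of $\lambda^\pm$ and $\sing(q)$, so the construction descends to $X \times \R$. In the pseudo-Anosov case, $f$ scales horizontal and vertical leaves by reciprocal factors, which shifts the time parameter by a constant; this gives the simplicial lift $\Phi$ and a finite quotient triangulation $\tau$ of $M$, and the fibers of $\pi$ are precisely the time lines, which descend to orbits of the suspension flow.

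The main obstacle is showing that this gluing pattern assembles into a bona fide manifold triangulation of $\til X \times \R$. In particular I would need to verify that (i) the time slices sweep out all of $\til X$ as the parameter ranges over $\R$, (ii) interiors of distinct stacked tetrahedra are disjoint, and (iii) the link of each ideal edge is a disk and each ideal vertex has the expected planar link. The fully-punctured hypothesis $\PP = \sing(q)$ is essential here: it guarantees that every edge of $\til\tau$ is a saddle connection with both endpoints in $\hat\PP$, so the link analysis reduces to a finite local picture around each singularity of $\hat X$.
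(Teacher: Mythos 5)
The paper does not actually prove this statement --- it is imported wholesale from Gu\'eritaud \cite{gueritaud} --- and your outline does follow his route: maximal rectangles give tetrahedra, rectangles with three boundary singularities govern the face identifications, and a sweep through families of rectangles is meant to produce the product structure. The problem is that what you have written is a plan rather than a proof: every load-bearing assertion is announced and then deferred. The claims that consecutive maximal rectangles in your one-parameter family share three boundary singularities; that a rectangle with singularities on exactly three sides extends to exactly two maximal rectangles, so each ideal triangle is a face of precisely two tetrahedra; that the maximal rectangles containing a fixed non-singular point form a discrete, totally ordered family (this is what makes $\pi^{-1}(x)$ a line, forces disjointness of tetrahedron interiors, and ultimately yields the homeomorphism with $\til X\times\R$); that edge links close up into circles; and that at a generic parameter the diagonals of the two-singularity rectangles tile $\til X$ by an ideal triangulation --- these \emph{are} the content of the theorem, and your final paragraph lists them as things you ``would need to verify.'' In particular the last item (existence of spanning triangulations, i.e.\ sections) is itself nontrivial: in the paper it appears as \Cref{gue-sweep}, again credited to Gu\'eritaud, with an independent argument only sketched later (second proof of \Cref{lem:extension}). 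So beyond the definition of the tetrahedra, nothing has actually been established.

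Two further points to tighten if you carry this out. First, for a general $q$ (not arising from a pseudo-Anosov map) there is no canonical global ``time axis'' on $\til X\times\R$; the product structure has to be extracted from the fibration $\pi$ with ordered line fibers (in the pseudo-Anosov case one may use $\log(h/w)$ for a maximal rectangle of width $w$ and height $h$), and your ``stacking along the time axis'' should be rephrased accordingly before conditions (i)--(iii) even make sense. Second, the flattening is misdescribed: the top and bottom edges of $T_R$ are \emph{not} horizontal or vertical leaf segments --- such saddle connections are excluded by hypothesis --- but slanted saddle connections joining the singularities on opposite sides of $R$, and they are embedded by the flattening, not collapsed; relatedly, the assertion that each side of a maximal rectangle carries exactly one singularity in its interior uses maximality and discreteness of $\hat\PP$ (to exclude a corner singularity, which would degenerate the tetrahedron), not only the absence of horizontal and vertical saddle connections.
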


We will frequently abuse notation and use $\tau$ to refer to the
triangulation both in $M$ and in its covers. 

We note that a saddle connection $\sigma$ of $q$ is an edge of $\tau$
if and only if $\sigma$ spans a singularity-free rectangle in $X$. See
\Cref{extend-rect}.

\realfig{extend-rect}{The singularity-free rectangle spanned by $\sigma$ can be
  extended horizontally (or vertically) to a maximal one.}

If $e$ and $f$ are two crossing $\tau$-edges spanning rectangles $R_e$ and $R_f$,
note that $R_e$ crosses $R_f$ from top to bottom, or from left to
right -- any other configuration would contradict the singularity-free
property of the rectangles (\Cref{edges-cross}). If $\sl(e)$ denotes
the absolute value of the slope of $e$ with respect to $q$, we can see 
that $R_e$ crosses $R_f$ from top to bottom if and only if
$e$ crosses $f$ and 
$\sl(e) > \sl(f)$. We say that $e$ is {\em more vertical} than $f$ and
also 
write $e>f$. We will see that $e>f$ corresponds to $e$ lying higher
than $f$ in the uppward flow direction. 

Indeed we can see already that the relation $>$ is transitive, since
if $e>f$ and $f>g$ then the rectangle of $g$ is forced to intersect
the rectangle of $e$.

\realfig{edges-cross}{The rectangle of $e$ crosses $f$ from top to
  bottom and we write $e>f$.}

We conclude with a brief description of  the local structure of $\tau$ around
an edge $e$: The rectangle spanned by $e$ can be extended
horizontally to define a tetrahedron lying below $e$ in the flow direction (\Cref{extend-rect}),
and vertically to define a tetrahedron lying above $e$ in the flow
direction. Call these $Q_-$ and $Q_+$ as in
\Cref{edge-swing}. Between these, on each side of $e$,
is a sequence of tetrahedra $Q_1,\ldots,Q_m$ $(m\ge 1)$ so that two successive
tetrahedra in the sequence $Q_-,Q_1,\ldots,Q_m,Q_+$ share a triangular
face adjacent to $e$. 
We find this sequence by starting with one
of the two top faces of $Q_-$, extending its spanning rectangle vertically until it hits
a singularity, and calling $Q_1$ the tetrahedron whose projection is inscribed in the new rectangle. If the new singularity belongs to $Q_+$ we are done $(m = 1)$, otherwise we repeat from the top face of $Q_1$ containing $e$ to find $Q_2$, and continue in this manner.
\Cref{edge-swing} illustrates this structure on one side of an edge
$e$. Repeating on the other side, note that the link of the edge $e$ is a circle, as expected.

\realfig{edge-swing}{The tetrahedra adjacent to an edge $e$ on one
  side form a sequence ``swinging'' around $e$}

\subsection{Arc and curve complexes}
\label{sec: arc_complex}

The arc and curve complex $\A(Y)$ for a compact surface $Y$ is usually
defined as follows:
its vertices are essential homotopy classes of 
embedded circles and properly embedded arcs $([0,1],\{0,1\}) \to
(Y,\boundary Y)$, where ``essential'' means not homotopic to a point or
into the boundary \cite{MM2}. We must be clear about the meaning of homotopy
classes here, for the case of arcs: If $Y$ is not an annulus,
homotopies of arcs are assumed to be homotopies of maps of pairs. When
$Y$ is an annulus the homotopies are also required to fix the endpoints.
Simplices of $\A(Y)$, in all cases, correspond to tuples of vertices
which can be simultaneously realized by maps that are disjoint on
their interiors. We endow $\A(Y)$ with the simplicial distance on its 
$1$-skeleton.

It will be useful, in the non-annular case, to observe that the
following definition is equivalent: Instead of maps of closed
intervals consider proper embeddings $\R \to \int(Y)$ into the
interior of $Y$, with equivalence arising from proper homotopy. This
definition is independent of the compactification of
$\int(Y)$. The natural isomorphism between these two versions of
$\A(Y)$ is induced by 
a straightening construction in a collar
neighborhood of the boundary.

If $Y\subset S$ is an essential subsurface  (meaning the inclusion of
$Y$ is $\pi_1$-injective and is not homotopic to a point or to an end
of $S$), we have subsurface projections $\pi_Y(\lambda)$ which are
defined for simplices $\lambda\subset \A(S)$ that intersect $Y$
essentially. Namely, after lifting $\lambda$ to the
cover $S_Y$ associated to $\pi_1(Y)$ (i.e. the cover to which $Y$ 
lifts homeomorphically and for which $S_Y \cong \int(Y)$), 
we obtain a collection of
properly embedded disjoint essential arcs and curves, 
which determine a simplex of $\A(Y)$.
We let $\pi_Y(\lambda)$ be the union of these vertices \cite{MM2}.
We make a similar definition for a lamination $\lambda$ that
intersects $Y$ essentially, except that we include not just the leaves
of $\lambda$ but all leaves that one can add in the complement of
$\lambda$ which accumulate on $\lambda$. This is natural when we
realize $\lambda$ as a measured {\em foliation} (as we do in most of
the paper), and need to include {\em generalized leaves}, which are
leaves that are allowed to pass through singularities. 
Note that the diameter of $\pi_Y(\lambda)$ in $\A(Y)$ is at most 2. 

Note that when  $Y$ is an annulus these arcs have natural endpoints
coming from the standard compactification of $\til S = \HH^2$ by a
circle at infinity. We remark that $\pi_Y$ does not depend on any choice
of hyperbolic metric on $S$.

When $Y$ is not an annulus and $\lambda$ and $\boundary Y$ are in
minimal position, we can also identify $\pi_Y(\lambda)$ with the
isotopy classes of components of $\lambda\intersect Y$.

These definitions naturally extend to 
immersed surfaces arising from covers of $S$. 
Let $\Gamma$ be a finitely generated subgroup of $\pi_1(S)$. Then the
corresponding cover $S_\Gamma \to S$ has a compact core $W$ --  a
compact subsurface $W \subset S_\Gamma$ such that $S_\Gamma \ssm
W$ is a collection of boundary parallel annuli. For curves or
laminations $\lambda^\pm$ of $S$, we have lifts
$\widetilde{\lambda}^\pm$ to $S_\Gamma$ and define
$d_W(\lambda^-,\lambda^+) = d_{S_\Gamma}(\widetilde{\lambda}^-,
\widetilde{\lambda}^+)$.

Throughout this paper, when $\lambda,\lambda'$ are two laminations or
arc/curve systems, we denote by $d_Y(\lambda,\lambda')$ the {\em
  minimal} distance between their images in $\A(Y)$,
 that is
$$
d_Y(\lambda,\lambda') = \min\{d_Y(l,l') : l \in \pi_Y(\lambda), l' \in  \pi_Y(\lambda') \}.
$$
To denote the \emph{maximal} distance between $\lambda$ and $\lambda'$ in $\A(Y)$ we write
$$
\diam_Y(\lambda,\lambda') = \diam_{\A(Y)}(\pi_Y(\lambda)\union\pi_Y(\lambda')). 
$$

\subsection{Flat geometry}
\label{AY in flat geometry}
In this section we return to the singular Euclidean geometry of $(X,q)$ 
and describe a circle at infinity for the flat metric
induced by $q$ on the universal cover $\til X$. 
We identify $\til X$ with $\HH^2$
after fixing a reference hyperbolic metric on $X$.
Because of incompleteness of the flat metric
at the punctures $\PP$, the connection between the circle we will describe
 and the usual circle at infinity for
$\HH^2$ requires a bit of care. A related 
discussion appears in Gu\'eritaud \cite{gueritaud}, although he deals
explicitly only with the fully-punctured case. 
With this picture of the
circle at infinity we will be able to describe $\pi_Y$ in terms of
$q$-geodesic representatives, and to describe a
$q$-convex hull for essential subsurfaces of $X$. In this section we do 
not assume that $X$ is fully-punctured.

The completion points $\hat \PP$ in $\hat X$ correspond
to parabolic fixed points for $\pi_1(X)$ in $\boundary \HH^2$, and
we abuse notation slightly by identifying $\hat\PP$ with this subset
of $\boundary \HH^2$.

A {\em complete $q$-geodesic ray} is either a geodesic ray
$r:[0,\infty)\to\hat X$ of infinite
length, or a finite-length geodesic segment that
terminates in $\hat\PP$.  A complete $q$-geodesic line is a geodesic
which is split by any point into two complete $q$-geodesic rays.
Our goal in this section is to describe a
circle at infinity that corresponds to endpoints of these rays.

\begin{proposition}\label{same compactification}
There is a compactification $\beta(\til X)$ of $\til X$ on which $\pi_1(X)$ acts
by homeomorphisms, with the following properties:
\begin{enumerate}
  \item There is a $\pi_1(X)$-equivariant homeomorphism $\beta(\til X) \to
    \overline{\HH^2}$, extending the identification of $\til X$ with $\HH^2$ and
taking $\hat \PP$ to the corresponding
    parabolic fixed points in $\boundary \HH^2$.
\item If $l$ is a complete $q$-geodesic line in $\hat X$ then its
  image in $\overline{\HH^2}$ is an embedded arc with endpoints on
  $\boundary\HH^2$ and interior points in $\HH^2 \cup \hat \PP$. Conversely, every pair of distinct points $x,y$ in
  $\boundary\beta(\til X) = \beta(\til X) \ssm \til X$ are the
  endpoints of a complete $q$-geodesic line.  The termination point in
  $\boundary\HH^2$ of a complete $q$-geodesic
  ray is in $\hat\PP$ if and only if it has finite length.
  \item The $q$-geodesic line connecting distinct
    $x,y\in\boundary\beta(\til X)$ is either unique, or there is a
    family of parallel geodesics making up an infinite Euclidean strip.
\end{enumerate}
\end{proposition}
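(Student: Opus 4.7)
The plan is to construct $\beta(\til X)$ as the cone-topology compactification of the CAT(0) completion $\hat X$, and then produce an equivariant homeomorphism from its visual boundary to $\partial\HH^2$ that extends the identifications on $\til X$ and on $\hat\PP$. Since $\hat X$ is a proper complete CAT(0) space, it carries a canonical visual boundary $\partial_\infty\hat X$ of asymptote classes of infinite $q$-geodesic rays based at a fixed $x_0\in\til X$; the cone topology then makes $\beta(\til X) := \hat X \cup \partial_\infty\hat X$ a compact $\pi_1(X)$-space, with frontier $\hat\PP \cup \partial_\infty\hat X$.

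For part (1), the equivariant homeomorphism $\beta(\til X)\to\overline{\HH^2}$ is defined piecewise: it restricts to the given identification on $\til X$, it sends each point of $\hat\PP$ to the corresponding cusp of the reference hyperbolic metric (which is a parabolic fixed point in $\partial\HH^2$), and it sends each asymptote class $[r]\in\partial_\infty\hat X$ to the limit of $r(t)$ in $\overline{\HH^2}$. The key assertion is that this limit exists and lies in $\partial\HH^2\ssm\hat\PP$ for every infinite-length $q$-geodesic ray $r$. To prove it: because $r$ has unit $q$-speed and infinite length, $r(t)$ leaves every $q$-bounded subset of $\hat X$; any hyperbolic ball in $\HH^2$ decomposes into a thick-part piece where the two metrics are bi-Lipschitz and finitely many horoball pieces, each contained in a bounded $q$-neighborhood of some $\hat p\in\hat\PP$, so hyperbolic balls are $q$-bounded and $r$ escapes every one of them. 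CAT(0) convexity then rules out oscillation between different arcs of $\partial\HH^2$, giving a single hyperbolic limit, which cannot be a parabolic fixed point since $r$ has infinite $q$-length. Surjectivity onto $\partial\HH^2\ssm\hat\PP$ follows from density of attracting fixed points of loxodromic elements of $\pi_1(X)$ (whose axes are simultaneously hyperbolic and $q$-geodesics) together with a diagonal extraction.

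Parts (2) and (3) are now largely formal consequences. For (2), any complete $q$-geodesic line splits at any interior point into two complete rays; by the above these have limits in $\partial\beta(\til X)$, with finite-length rays precisely accounting for limits in $\hat\PP$. The image in $\overline{\HH^2}$ is an embedded arc since a CAT(0) geodesic is embedded in $\hat X$ and its two endpoints on $\partial\beta(\til X)$ are distinct. For the converse, given distinct $x,y\in\partial\beta(\til X)$, approximate by sequences $x_n,y_n$ in $\til X \cup \hat\PP$, join them by the unique $q$-geodesic segments in $\hat X$, and extract a limiting bi-infinite $q$-geodesic via Arzel\`a-Ascoli on $\hat X$. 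Part (3) is a direct application of the CAT(0) flat strip theorem: two bi-infinite $q$-geodesics sharing both endpoints are parallel and cobound a Euclidean flat strip in $\hat X$.

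The main obstacle is the careful analysis of $q$-geodesic behavior near the cusps: one must show precisely that horoballs around $\hat p\in\hat\PP$, which are hyperbolically unbounded, correspond to bounded $q$-neighborhoods of $\hat p$ in $\hat X$, and that a $q$-geodesic entering such a neighborhood either escapes with an infinite-length tail or reaches $\hat p$ in finite $q$-length --- no hybrid behavior occurs. This dichotomy, and the resulting cleanly matched behavior of the two compactifications at cusps, hinges on the explicit local form of $q$ at a pole of order at most one, where the completion point is a finite-distance cone-like singularity rather than a true point at infinity.
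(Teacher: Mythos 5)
Your construction breaks at its very first assertion: $\hat X$ is complete and CAT(0), but it is \emph{not} proper. The covering $\hat X \to \bar X$ is infinitely branched over $\PP$, so every completion point $p \in \hat\PP$ has infinite cone angle; taking points $x_n$ at $q$-distance $\epsilon$ from $p$ in directions separated by angle at least $\pi$ produces a $2\epsilon$-separated sequence inside the closed ball of radius $\epsilon$ about $p$. Thus closed balls are not compact and $\hat X$ fails to be locally compact along $\hat\PP$ (the paper flags exactly this point). As a consequence, the cone-topology bordification $\hat X \cup \partial_\infty \hat X$ is not compact: the sequence $x_n$ stays at bounded distance from any basepoint, so it cannot accumulate on $\partial_\infty\hat X$, and it has no limit in $\hat X$ either; so your $\beta(\til X)$ cannot be homeomorphic to $\overline{\HH^2}$. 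The same example shows the topology you put at $p\in\hat\PP$ is the wrong one for part (1): in your space the neighborhoods of $p$ are metric balls, i.e.\ (by the identification you yourself describe) horoball neighborhoods, whereas in $\overline{\HH^2}$ a neighborhood basis at the parabolic point consists of a horoball \emph{together with two half-plane regions flanking $p$}. Concretely, the orbit of a point under the parabolic stabilizer of $p$ converges to $p$ in $\overline{\HH^2}$ but converges to nothing in your space. This mismatch is precisely the delicacy caused by incompleteness at the punctures that the proposition exists to resolve, and your closing paragraph about geodesic behavior near cusps (which amounts to the paper's \Cref{hat same}) does not address it.

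The paper's construction is built around this failure of properness: it identifies $\hat X$ with $\HH^2 \cup \hat\PP$ carrying the horoball topology at $\hat\PP$, forms the Freudenthal end compactification --- which is non-Hausdorff exactly because $p$ and its two adjacent ends $p^{\pm}$ cannot be separated --- and then passes to the quotient identifying $p^-\sim p\sim p^+$; that quotient is what manufactures the coarser neighborhoods (horoball plus two adjacent half-planes) needed to match $\overline{\HH^2}$. Your arguments for (2) and (3) also quietly use properness where it is unavailable: Arzel\`a--Ascoli does not apply directly in $\hat X$, and the paper replaces it by an exhaustion by compact polygonal disks $D_n$ with vertices in $\hat\PP$ and boundary edges of bounded $q$-length through which the connecting geodesics must pass, together with an accumulated-angle argument at the vertices to show an infinite-length ray cannot limit onto a point of $\hat\PP$ (a statement you assert but do not prove). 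The flat-strip argument for (3) is sound in a complete CAT(0) space once one knows the two lines have the same endpoints in the \emph{correct} boundary and are hence parallel, but it cannot stand independently of a repaired part (1)/(2).
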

One of the tricky points of this picture is that $q$-geodesic rays and
lines may meet points of the boundary $\boundary\beta(\til X)$ not just at their
endpoints. 

\begin{proof}
When $\PP = \emptyset$ and $X$ is a closed surface, $\til X$ is
quasi-isometric to $\HH^2$ and the proposition holds for the standard
Gromov compactification. We assume from now on that
$\PP\ne\emptyset$.  

We begin by  setting $\hat \HH^2 = \HH^2 \cup \hat \PP$ and endowing
it with the topology obtained 
by taking, for each $p \in \hat \PP$, horoballs based at $p$ as a
neighborhood basis for $p$. 

\begin{lemma}\label{hat same}
The natural identification of $\widetilde{X}$ with $\HH^2$
extends to a homeomorphism from $\hat X$ to $\hat \HH^2$.
\end{lemma}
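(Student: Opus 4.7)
The plan is to first define an extension $\Phi : \hat X \to \hat{\HH}^2$ as a $\pi_1(X)$-equivariant bijection and then to verify it is a homeomorphism by comparing neighborhood bases at the added points.

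First, I will identify the completion points $\hat\PP \subset \hat X$ with the parabolic fixed points in $\boundary\HH^2$ corresponding to punctures. For each $p \in \PP$, let $\gamma_p \in \pi_1(X)$ be a parabolic generator of the peripheral subgroup of $p$. Take a small punctured-disk neighborhood $U_p$ of $p$ in $\bar X$ on which $q$ has local form $c\, z^n dz^2$ with $n \ge -1$ (by integrability). The preimage of $U_p$ in $\widetilde X$ decomposes into connected components indexed by $\pi_1(X)/\langle\gamma_p\rangle$, each the universal cover of $U_p$ and each invariant under a conjugate of $\langle\gamma_p\rangle$. In each such component, the radial flat distance to $p$ is $|z|^{n/2+1}/(n/2+1)$, which is finite, so the flat completion adds exactly one point per component. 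The parabolic fixed points above $p$ in $\boundary\HH^2$ are indexed by the same cosets, giving the desired $\pi_1(X)$-equivariant bijection. Together with the identity on $\widetilde X$ this defines $\Phi$.

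To show $\Phi$ is a homeomorphism I compare the neighborhood bases at each added point $\tilde p$: the flat balls $B_r(\tilde p)$ in $\hat X$ and the horoballs $H_s(\tilde p)$ in $\hat{\HH}^2$. I reduce to a local statement downstairs in $X$: the flat balls $B_r(p)$ and the horocyclic neighborhoods $H_s(p)$ form equivalent neighborhood bases of $p$ in the manifold topology of $X \cup \{p\}$. Both are nested families of topological disks containing $p$ that shrink to $\{p\}$ in this topology. The cone structure of the flat metric near $p$ (cone angle $(n+2)\pi$, with the radial geodesic realizing flat distance to $p$) shows that $B_r(p)$ is a chart-disk shrinking to $p$ as $r \to 0$; and the standard cusp form of the hyperbolic metric shows $H_s(p)$ is also a chart-disk shrinking to $p$ as $s \to \infty$. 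Since both are neighborhood bases in the same underlying manifold topology, they are equivalent. Lifting back, the containment $B_r(p) \subseteq H_s(p)$ yields a component-wise containment $B_r(\tilde p) \subseteq H_s(\tilde p)$: both sides are connected, $\langle\gamma_p\rangle$-invariant neighborhoods of $\tilde p$, each equal to the connected component of the respective preimage containing $\tilde p$. Combined with continuity on the interior $\widetilde X$ (automatic, since both metrics induce the manifold topology there), this gives the homeomorphism.

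The main technical obstacle is the downstairs comparison of the two neighborhood bases near a puncture. This requires handling all local models of $q$ at $p$ uniformly: simple pole ($n = -1$, cone angle $\pi$), regular point ($n = 0$), and zeros of any order ($n \ge 1$). In each case one must verify that the radial flat geodesic realizes the distance from $p$, so that $B_r(p)$ has a clean description as a chart-disk, and that both $B_r(p)$ and $H_s(p)$ genuinely shrink to $p$ in the chart topology. Once this is settled, the passage to the universal cover is essentially bookkeeping through $\pi_1(X)$-equivariance.
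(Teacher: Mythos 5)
Your proposal is correct and follows essentially the same route as the paper: the paper also reduces the problem to the added points and uses the fact that, downstairs, horoball (cusp) neighborhoods of a puncture and $q$-metric balls form equivalent neighborhood bases, the identification of completion points with parabolic fixed points having been set up beforehand. Your write-up simply makes explicit the coset bookkeeping and the local models $c\,z^n\,dz^2$ that the paper leaves implicit.
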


\begin{proof}
First note that $\hat \PP$ is discrete as both a subspace of $\hat X$
and of $\hat \HH^2$.  Hence, it suffices to show that a sequence of
points $x_i$ in $\widetilde{X} = \HH^2$ converges to a point $p \in
\hat \PP$ in $\hat X$ if and only if it converges to $p$ in $\hat
\HH^2$. This follows from the fact that the horoball neighborhoods of
$p$ descend to cusp neighborhoods in $X$ which form a
neighborhood basis for the puncture that is equivalent to the
neighborhood basis of $q$-metric balls. 
\end{proof}

Our strategy now is to form the {\em Freudenthal space} of $\hat X$
and equivalently $\hat \HH^2$, which appends a space of {\em
  ends}. This space will be compact but not Hausdorff, and after a
mild quotient we will obtain the desired compactification which can be
identified with $\overline{\HH^2}$. Simple properties of this
construction will then allow us to obtain the geometric conclusions in
part (2) of the proposition.

Let $\epsilon(\hat X)$ be the space of ends of $\hat X$, that is the
inverse limit of the system of path components of complements of
compact sets in $\hat X$. The Freudenthal space $\Fr(\hat X)$ is the
union $\hat X\union \ep(\hat X)$ endowed with the toplogy generated by
using path components of complements of compacta to describe
neighborhood bases for the ends. Because $\hat X$ is not locally
compact, $\Fr(\hat X)$ is not guaranteed to be compact, and we have to take a bit of
care to describe it.

The construction can of course be repeated for $\hat\HH^2$, and 
the homeomorphism of \Cref{hat same} gives rise to a
homeomorphism $\Fr(\hat X) \to \Fr(\HH^2)$. 
Let us work in $\hat\HH^2$ now, where we can describe the ends
concretely using the following observations:

Every compact set
$K\subset \hat\HH^2$ meets $\hat\PP$ in a finite set $A$ (since $\hat
\PP$ is discrete in $\hat\HH^2$), and such a $K$ is
contained in an embedded closed disk $D$ which also meets $\hat\PP$ at
$A$. (This is not hard to see but does require attention to deal
correctly with the horoball neighborhood bases).
The components of $\hat\HH^2\ssm D$ determine a partition of
$\ep(\hat\HH^2)$, which in fact depends only on the set $A$ and not on
$D$ (if $D'$ is another disk meeting $\hat\PP$ at $A$, then $D\union
D'$ is contained in a third disk $D''$, and this  common refinement
of the neighborhoods gives the same partition).
Thus we have a more manageble (countable) inverse system of
neighborhoods in $\ep(\hat\HH^2)$, and with this description it is not
hard to see that $\ep(\hat\HH^2)$ is a Cantor set.

For each $p\in\hat\PP$ there are two distinguished ends $p^+,p^-\in
\ep(\hat\HH^2)$ defined as follows: For each finite subset $A\subset\hat\PP$
with at least two 
points one of which is $p$, the two partition terms adjacent to $p$ in the
circle (or equivalently, in the boundary of any $D\subset \hat \HH^2$
meeting $\hat\PP$ in $A$)
define neighborhoods in
$\ep(\hat\HH^2)$, and this pair of neighborhood systems determines $p^+$ and
$p^-$ respectively.

One can also see that $p^+$ (and $p^-$) and $p$ do not admit disjoint
neighborhoods, and this is why $\Fr(\hat\HH^2)$ is not Hausdorff. We
are therefore led to define the quotient space
\[
\beta(\hat \HH^2) = \Fr(\hat\HH^2)  / \sim , 
\]
where we make the identifications $p^- \sim p \sim p^+$, for each $p\in\hat\PP$.

We can make the same definitions in $\hat X$, obtaining
\[
\beta(\hat X) = \Fr(\hat X)  / \sim , 
\]
which we rename $\beta(\til X)$. 
Since the definitions are purely in terms of the topology of the spaces
$\hat\HH^2$ and $\hat X$, 
the homeomorphism of
 \Cref{hat same} extends to a homeomorphism $\beta(\til X) \to \beta(\hat\HH^2)$.

Part (1) of
\Cref{same compactification} follows once we establish that the
identity map of $\HH^2$ extends to a homeomorphism
$$
\beta(\hat\HH^2) \cong \overline{\HH^2}.
$$
This is not hard to see once we observe that the disks used above to
define neighborhood systems can be chosen to be ideal hyperbolic polygons. Their
halfspace complements serve as neighborhood systems for points of
$\boundary\HH^2\setminus\hat\PP$. A sequence converges in
$\overline{\HH^2}$ to  a point $p\in \hat\PP$ 
if it is eventually contained in any union of a horoball centered at p and two half-planes adjacent to $p$ on opposite sides.
This is modeled exactly by the equivalence relation $\sim$.

For part (2), let $D_0$ be a fundamental domain for $\pi_1(X)$ in
$\hat X$, which may be chosen to be a disk with vertices at points of
$\hat\PP$, and of finite $q$-diameter. Translates of $D_0$ can be glued
to build a sequence of nested disks $D_n$ exhausting $\hat X$, each of
which meets $\hat\PP$ in a finite set of vertices, and whose boundary is composed
of arcs of bounded diameter between successive vertices. 

A complete $q$-geodesic ray $r$ either has finite length and terminates in
a point of $\hat\PP$, or has infinite length in which case it leaves
every compact set of $\hat X$, and visits each point of $\hat\PP$ at
most once. Thus it must terminate in a point of
$\ep(\hat X)$ in the Freudenthal space. We claim that this point 
cannot be $p^+$ or $p^-$ for $p\in\hat\PP$. If $r$ terminates in
$p^+$, then for each disk $D_n$ ($n$ large) it must pass through the edge of
$\boundary D_n$ adjacent to $p$ on the side associated to $p^+$. Any
two such consecutive edges meet in $p$ at one of finitely many angles (images of
corners of $D_0$), and hence the accumulated angle between edges goes
to $\infty$ with $n$. If we replace these edges by their $q$-geodesic
representatives, the angles still go to $\infty$.
This means that $r$ contains infinitely many disjoint subsegments
whose endpoints are a bounded distance from $p$, but this contradicts the
assumption that $r$ is 
a geodesic ray.


The image of $r$ in the quotient $\beta(\til X)$ therefore terminates
in a point of $\hat\PP$ when it has finite length, and a point in
$\boundary\beta(\til X)\ssm \hat\PP$ otherwise.  The same is true for
both ends of a complete $q$-geodesic line $l$, and we note that both
ends of $l$ 
cannot land on the same point because then we would have a sequence of
segments $l_n\subset l$ of length going to $\infty$ with both
endpoints of $l_n$ on the same edge or on two consecutive edges
of $\boundary D_n$, a
contradiction to the fact that $l_n$ is a geodesic and the arcs in
$\boundary D_n$ have bounded $q$-length.

Now let $x,y$ be two distinct points in $\boundary\beta(\til
X)$. Assume first that both are not in $\hat\PP$. Then for large
enough $n$, they are in separate components of the complement of
$D_n$. If we let $x_i \to x$ and $y_i\to y$ be sequences in $\beta(\til X)$, then eventually $x_i$ and
$y_i$ are in the same components of the complement of $D_n$ as $x$ and
$y$, respectively. The
geodesic from $x_i$ to $y_i$ must therefore pass through the
corresponding boundary segments of $D_n$ and in particular through $D_n$,
so we can extract a
convergent subsequence as $i\to\infty$. Letting $n\to\infty$ and diagonalizing we
obtain a limiting geodesic which terminates in $x,y$ as desired. If
$x\in \hat\PP$ or $y\in\hat\PP$ the same argument works except that we
can take $x_i \equiv x$ or $y_i \equiv y$. This establishes part (2).

Now let $l$ and $l'$ be two $q$-geodesics terminating in $x$ and $y$.
If $x$ and $y$ are in $\hat\PP$ then $l=l'$ since the metric is
CAT(0). If $x\notin \hat\PP$ then both $l$ and $l'$ pass through
infinitely many segments of $\boundary D_n$ on their way
to $x$. Since these segments have uniformly bounded lengths, $l$ and
$l'$ remain a bounded distance apart. If $y\in\hat\PP$ then again
CAT(0) implies that $l=l'$, and if $y\notin\hat\PP$ then $l$ and $l'$
must cobound an infinite flat strip. This establishes part (3). 
\qedhere
\end{proof}

With \Cref{same compactification} in hand we can consider 
each complete $q$-geodesic line $l$ in $\hat X = \hat \HH^2$ as an arc
in the closed disk $\overline{\HH^2}$, which by the Jordan curve
theorem separates the disk ${\HH^2}$ into at least
$2$ components. Each component is an open disk whose closure meets
$\boundary\HH^2$ in a subarc of one of the complementary arcs of the endpoints of
$l$. We call the union of disks whose closures meet one of these complementary arcs
of the endpoints of $l$
an {\em open side} $\openside{l}$ of $l$. The closure of each open side in $\overline \HH^2$ is then a connected union
of closed disks, attached to each other along the points of $\hat\PP$
that $l$ meets on the circle. 
We call the closure of the open side $\openside{l}$ of $l$ in $\overline{\HH^2}$ the {\em side} $\side{l}$.  
Note that $\openside{l} = \int (\side{l} \cap \HH^2) = \side{l} \ssm (\partial \HH^2 \cup l)$, and if $\side{l}$ and $\side{l}'$ are the two sides of $l$, then $\side{l} \cap \side{l}' = l$.
See \Cref{line-disks}.

\realfig{line-disks}{A complete $q$-geodesic line $l$ ands its endpoints on $\partial \HH^2$.}

With this picture we can state the
following:

\begin{corollary} \label{cor:side_coherence}
Let $a,b$ be disjoint arcs in $\HH^2$ with well-defined, distinct endpoints on $\partial \HH^2$ and let $a_q,b_q$ be $q$-geodesic lines with the same endpoints as $a$ and $b$, respectively. Then $b_q$ is contained in a single side of $a_q$.
\end{corollary}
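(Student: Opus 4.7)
The plan is to proceed in two steps: first identify which side of $a_q$ should contain $b_q$, and then verify via CAT(0) geometry that it does.

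First I would use \Cref{same compactification}(2) to view $a_q$ as an embedded arc in $\overline{\HH^2}$ with distinct endpoints $\alpha_-, \alpha_+$ on $\partial \HH^2$; this arc separates $\overline{\HH^2}$ into the two closed disks $\side{a_q}$ and $\side{a_q}'$. Since $a$ and $b$ are disjoint arcs in $\HH^2$ with four distinct endpoints on $\partial \HH^2$, those endpoints are unlinked on the circle, so $\beta_-, \beta_+$ lie in one component of $\partial\HH^2 \ssm \{\alpha_-,\alpha_+\}$. This component lies in one side, which I call $\side{a_q}$. In particular $\beta_\pm \in \side{a_q}$, and a sufficiently small neighborhood in $\overline{\HH^2}$ of each $\beta_\pm$ is contained in $\side{a_q}$ (since $\side{a_q}'$ is closed and doesn't contain $\beta_\pm$).

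Second I would analyze $a_q \cap b_q$ using the CAT(0) structure on $\hat X$ coming from the flat metric. Since CAT(0) geodesics between pairs of points are unique, any two points $p,p' \in a_q \cap b_q$ are joined by a unique geodesic, which must coincide with the corresponding subsegments of both $a_q$ and $b_q$; hence $a_q \cap b_q$ is a closed, convex subset of the line $a_q$, so it is either empty, a point, a bounded segment, a ray, or all of $a_q$. A shared ray or full line would force $a_q$ and $b_q$ to share an endpoint in $\boundary\beta(\til X)$, contradicting that the four endpoints are distinct. Therefore $a_q \cap b_q$ has the form $b_q([t_1,t_2])$ for some $-\infty<t_1\le t_2<\infty$ (with empty intersection and $t_1=t_2$ as degenerate cases).

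Finally I would assemble the conclusion. The two open subarcs $b_q((-\infty,t_1))$ and $b_q((t_2,\infty))$ are disjoint from $a_q$, so each lies in a single component of $\hat X \ssm a_q$; since each accumulates at $\beta_-$ or $\beta_+$ and the relevant neighborhoods in $\overline{\HH^2}$ lie in $\side{a_q}$, each of these subarcs lies in $\openside{a_q}$. Combined with $b_q([t_1,t_2]) \subset a_q \subset \side{a_q}$, this gives $b_q \subset \side{a_q}$. The main subtlety I expect is the convexity step: I need that two $q$-geodesic lines with distinct endpoints at infinity can share at most a bounded segment, which is precisely what CAT(0) uniqueness of geodesics between pairs of points delivers.
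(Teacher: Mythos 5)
Your argument is correct and follows essentially the same route as the paper: CAT(0) geometry forces $a_q\cap b_q$ to be connected (a possibly degenerate or empty segment), and then each complementary subarc of $b_q$, having its far end at $\beta_\pm$ in the complementary arc of $\partial\HH^2$ containing $b$'s endpoints, lies in the corresponding open side of $a_q$. The only point worth noting is the degenerate case where $a_q$ passes through $\beta_+$ or $\beta_-$ as a point of $\hat\PP$ (so that $\side{a_q}'$ does contain it); there convexity of $a_q\cap b_q$ makes the corresponding subarc of $b_q\ssm a_q$ empty, so your neighborhood argument is only invoked when it is valid, matching the level of care in the paper's own proof.
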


\realfig{ab-intersect}{Disjoint arcs with their $q$-geodesic representatives.}

\begin{proof}
Letting $L$ and $R$ be the arcs of $\boundary\HH^2$
minus the endpoints of $a$, 
the endpoints of $b$ must lie in one of them, say $L$, since
$a$ and $b$ are disjoint. 

Since $a_q$ and $b_q$ are geodesics in the CAT$(0)$ space $\hat X$,
their intersection is connected. If their intersection is empty, then
the corollary is clear. Otherwise, $b_q\ssm a_q$ is one or
two arcs, each with one endpoint on $a_q$ and the other on $L$. It follows
that $b_q\ssm a_q$ is on one open side of $a_q$, and the corollary
follows. 
\end{proof}

\subsection*{Subsurfaces and projections in the flat metric}      
Let $Y \subset X$ be an essential compact subsurface, and let
$X_Y=\til X/\pi_1(Y)$ be the associated cover of $X$. (Here we have identified $\pi_1(X)$ with the deck transformations of $\widetilde{X} \to X$ and fixed $\pi_1(Y)$ within its conjugacy class.)
For any
lamination $\lambda$ in $X$, we want to show that the projection
$\pi_Y(\lambda)$ can be 
realized by subsegments of the $q$-geodesic representative of $\lambda$.
Recall that $X$ is not necessarily 
fully-punctured.

We say a boundary component of $Y$ is {\em puncture-parallel} if it bounds a
disk in $\bar X \ssm Y$ that contains a single point of $\PP$. We
denote the corresponding subset of $\PP$ by $\PP_Y$ and refer to them
as the \emph{punctures} of $Y$. Let $\til{\PP}_Y$ denote the subset of
punctures of $X_Y$ which are encircled by the boundary components of
the lift of $Y$ to $X_Y$. 
In terms of the completed space $\bar X_Y$, $\til \PP_Y$ is exactly the set of completion points which have finite total angle.  
Let  $\boundary_0Y$ denote the union of the puncture-parallel components of
$\boundary Y$ and let $\boundary'Y$ denote the rest. Observe that the 
components of $\boundary_0 Y$ are in natural bijection with $\PP_Y$ and set
$Y' = Y\ssm\boundary_0Y$. 

Identifying $\til X$ with $\HH^2$, 
let $\Lambda\subset \boundary\HH^2$ be the limit set of
$\pi_1(Y)$, $\Omega = 
\boundary\HH^2 \ssm \Lambda$, and $\hat\PP_Y\subset \Lambda$
the set of parabolic fixed points of $\pi_1(Y)$.
Let $C(X_Y)$ denote the compactification of $X_Y$ given by
$(\HH^2 \union \Omega\union \hat\PP_Y)/\pi_1(Y)$, adding a point for each 
puncture-parallel end of $X_Y$, and a circle for each of the other
ends. Now given a lamination (or foliation) $\lambda$, realized geodesically in the
hyperbolic metric on $X$, its lift to $X_Y$ extends to properly
embedded arcs in $C(X_Y)$, of which the ones that
 are essential give
$\pi_Y(\lambda)$.

\Cref{same compactification}
allows us to
perform the same construction with the $q$-geodesic representative of
$\lambda$. Note that the leaves we obtain may 
meet points of $\til \PP_Y$
in their interior, but a slight perturbation
produces properly embedded lines in $X_Y$ which are properly isotopic
to the leaves coming from $\lambda$.

If $Y$ is an annulus the same construction works, with the observation
that the ends of $Y$ cannot be puncture-parallel and hence $C(Y)$ is
a closed annulus and the leaves have well-defined endpoints in its
boundary. We have proved:

\begin{lemma}
  \label{q arcs for AY}
Let $Y\subset X$ be an essential subsurface. 
If $\lambda$ is a proper arc or lamination in $X$ then the lifts of its $q$-geodesic
representatives to $X_Y$, after discarding inessential components, give representatives of $\pi_Y(\lambda)$.   
\end{lemma}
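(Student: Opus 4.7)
The plan is to reduce to the classical hyperbolic statement about $\pi_Y$ and then transport the conclusion along the identification of compactifications provided by \Cref{same compactification}. First I would fix a complete hyperbolic structure on $X$. In this metric the vertices of $\pi_Y(\lambda)$ are defined exactly by taking the hyperbolic geodesic representative of $\lambda$, lifting to the cover $X_Y = \til X/\pi_1(Y)$, extending the lifts to properly embedded arcs in the compactification $C(X_Y) = (\HH^2\union\Omega\union\hat\PP_Y)/\pi_1(Y)$, and discarding those that bound disks or are homotopic into the boundary. I want to show that when we replace each hyperbolic leaf by its $q$-geodesic representative, we produce arcs that are properly isotopic to the hyperbolic ones in $C(X_Y)$, so that the resulting essential arcs and curves represent the same simplex of $\A(Y)$.

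The main ingredient is \Cref{same compactification}: the homeomorphism $\beta(\til X)\to\overline{\HH^2}$ extends the identification of $\til X$ with $\HH^2$ and sends $\hat\PP$ to the corresponding parabolic fixed points. Lift a leaf $\ell$ of $\lambda$ to $\til X$; its hyperbolic representative $\ell_h$ and its $q$-geodesic representative $\ell_q$ then share the same pair of endpoints in $\boundary\beta(\til X)\cong \boundary\HH^2$. Descending to the cover, this common pair of endpoints lies in $\Omega/\pi_1(Y)$ or in $\hat\PP_Y/\pi_1(Y)$ precisely when the leaf survives the passage to $C(X_Y)$ as a proper arc. Two properly embedded arcs in $C(X_Y)$ with the same endpoint pair on the boundary of this compactified surface are properly isotopic (by an elementary disk-with-punctures argument), so $\ell_q$ and $\ell_h$ represent the same vertex of $\A(Y)$ and, in particular, essentiality is preserved under the replacement.

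The genuine technical subtlety, which I expect to be the main obstacle, is that a $q$-geodesic ray is allowed to pass through points of $\hat\PP$ in its interior; some of these are singularities in the cover that do not correspond to punctures of $Y$, and even the points of $\til\PP_Y$ are not literal points of $X_Y$. I would handle this exactly as flagged in the text above the lemma: at each interior crossing of a point of $\hat\PP$ one perturbs $\ell_q$ slightly inside a small metric disk around that point, choosing one of the two sides; the perturbation is supported in a contractible region and does not change the endpoint pair, so the perturbed arc is a properly embedded line in $X_Y$ properly isotopic in $C(X_Y)$ to $\ell_h$. The annular case requires no extra argument, because the two ends of an essential annulus are never puncture-parallel, so $C(Y)$ is a closed annulus whose boundary circles inherit well-defined endpoints for $\ell_q$ from the homeomorphism of \Cref{same compactification}. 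Finally, discarding inessential components on the $q$-side gives the same list as on the hyperbolic side by the proper isotopy just produced, which completes the identification with $\pi_Y(\lambda)$.
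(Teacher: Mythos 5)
Your argument is essentially the paper's own proof: realize the leaves of $\lambda$ $q$-geodesically, use \Cref{same compactification} to see that each $q$-leaf and the corresponding hyperbolic leaf have lifts sharing the same ideal endpoints, extend to $C(X_Y)$, perturb off interior completion points, and treat the annular case by noting its ends are not puncture-parallel. One small imprecision: the proper isotopy should be deduced from the lifts sharing endpoints on $\partial \HH^2$, not from the arcs merely having the same endpoint pair in $C(X_Y)$ (which does not determine the isotopy class), and at a finite-angle completion point of $\bar X_Y$ the two perturbation sides give non-isotopic arcs, so the side must be chosen consistently with the lifted picture --- though the paper's own treatment of this perturbation is equally brief.
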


\subsection*{$q$-convex hulls}  
We will need a flat-geometry analogue of the hyperbolic convex hull.
The main idea is simple -- pull the boundary 
of the regular
convex hull tight using $q$-geodesics. The only difficulty comes from
the fact that
these geodesics can pass through parabolic fixed points, and
fail to be disjoint from each other, so 
the resulting object may fail to be an embedded surface. 
Our discussion is similar to Section $3$ of
Rafi \cite{rafi2005characterization}, but the discussion there
requires adjustments to handle correctly the incompleteness at punctures.

As above, identify $\til X$ with $\HH^2$. Let 
$\Lambda \subset \partial \HH^2$ be a closed set and let 
$\CH(\Lambda)$ be the convex hull of $\Lambda$ in $\HH^2$. We define
$\CH_q(\Lambda)$ as follows.

Assume first that $\Lambda$ has at least 3 points.
Each boundary geodesic $l$ of $\CH(\Lambda)$ has the same
endpoints as a
(biinfinite) $q$-geodesic $l_q$.  
By part (3) of \Cref{same compactification},
$l_q$ is
unique unless it is part of a parallel family of geodesics, making a
Euclidean strip.

The plane is divided by $l_q$ into two sides as in the discussion
before \Cref{cor:side_coherence},
and one of the sides, which we call  
$\side{l}$, meets $\partial \HH^2$
 in a subset of the complement of $\Lambda$.
Recall that $\side{l}$ is either a disk
or a string of disks attached along puncture
points.
If $l_q$ is one of 
a parallel family of geodesics, we include this family in $\side{l}$.
After deleting from $\hat X$ the interiors of $\side{l}$ for all $l$ 
in $\boundary \CH(\Lambda)$ (which are disjoint by \Cref{cor:side_coherence}),
we obtain $\CH_q(\Lambda)$, the
$q$-convex hull. 

If $\Lambda$ has 2 points then $\CH_q(\Lambda)$ is the closed Euclidean strip
formed by the union of $q$-geodesics joining those two points.

\medskip

Now fixing a subsurface $Y$ we can define a $q$-convex hull for the
cover $X_Y$, by taking a quotient of the $q$-convex hull of the limit
set $\Lambda_Y$ of $\pi_1(Y)$. This quotient, which we will denote  by
$\CH_q(X_Y)$, lies in the completion $\bar{X}_Y$.
Because $\CH_q(X_Y)$
may not be homeomorphic to $Y$, we pay explicit attention to a marking
map between $Y$ and its hull.

Let $\hat\iota:Y \to X_Y$ be the lift of the inclusion map to the
cover.

\begin{lemma}
  \label{q tight}
The lift $\hat\iota:Y\to X_Y$ is homotopic to a map $\hat\iota_q:Y\to \bar
X_Y$ whose image is the $q$-hull $\CH_q(X_Y)$ such that
\begin{enumerate}
\item  The homotopy $(h_t)_{t \in[0,1]}$ from $\hat\iota$ to $\hat\iota_q$ has the property that 
$h_t(Y) \subset X_Y$ for all $t \in [0,1)$.
\item Each 
component of $\partial_0 Y$
is taken by
    $\hat\iota_q$ to the corresponding completion point of $\til{\PP}_Y$.
\item If $Y$ is an annulus then the image of $\hat\iota_q$ is either a maximal flat cylinder in $\bar X_Y$ or the unique geodesic representative of the core of $Y$ in $\bar X_Y$.
\item If $Y$ is not an annulus then 
each component $\gamma$ of $\partial' Y$
 is taken by $\hat\iota_q$ to a
    $q$-geodesic representative in $\bar X_Y$. If there is a flat
    cylinder in the homotopy class of $\gamma$ then the interior of
    the cylinder is disjoint from $\hat\iota_q(Y)$.
  \item There is a deformation retraction $r:\bar X_Y \to
    \hat\iota_q(Y)$. For each component $\gamma$ of $\boundary'Y$, the
    preimage $r^{-1}(\hat\iota_q(\gamma))$ intersects $X_Y$ in either
    an open annulus or a union of open disks joined in a cycle along
    points in their closures.

\item If the interior $\int(\CH_q(\Lambda_Y))$ is a disk then
      $\hat\iota_q$ is a homeomorphism from $Y' = Y\ssm\boundary_0Y$  to its image.

\end{enumerate}
\end{lemma}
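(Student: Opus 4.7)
The plan is to construct $\hat\iota_q$ via a staged homotopy from $\hat\iota$: first collapse $X_Y$ onto its hyperbolic convex core, then push each hyperbolic boundary geodesic to a $q$-geodesic with the same endpoints in $\boundary\beta(\til X)$, and finally crush cusp neighborhoods at puncture-parallel boundary components to completion points. The annulus case (3) serves as a base case: when $Y$ is an annulus, $\Lambda_Y$ consists of two points and by part (3) of \Cref{same compactification}, $\CH_q(\Lambda_Y)$ is either a single $q$-geodesic line or a maximal flat Euclidean strip, on which the cyclic group $\pi_1(Y)$ acts by translation. The quotient in $\bar X_Y$ is then either the unique simple closed $q$-geodesic representative of the core or a maximal flat cylinder, and I would define $\hat\iota_q$ as the natural collapse onto this quotient; properties (1) and (5) are immediate, while (2), (4), (6) do not apply.

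For non-annular $Y$, I work $\pi_1(Y)$-equivariantly in $\hat X$. The boundary of the hyperbolic convex hull $\CH(\Lambda_Y)$ is a disjoint union of hyperbolic geodesics $l$, one per non-puncture-parallel boundary component of $Y$. A standard equivariant deformation retract of $X_Y$ onto $\CH(\Lambda_Y)/\pi_1(Y)$ gives a preliminary map in the homotopy class of $\hat\iota$. For each such $l$, I replace it by the $q$-geodesic $l_q$ with the same endpoints in $\boundary\beta(\til X)$; when $l_q$ lies in a parallel family I choose the boundary geodesic of the flat strip farthest from the hull, so that the strip interior is disjoint from $\hat\iota_q(Y)$, yielding the second clause of (4). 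By \Cref{cor:side_coherence} the sides $\side{l}$ are pairwise disjoint in $\hat X$, so these homotopies can be performed simultaneously across $\hat X$ and then descend to $X_Y$. At each puncture-parallel boundary of $Y$, collapse a horocyclic cusp collar onto the corresponding completion point in $\til\PP_Y$; this is the only step where the image meets $\hat\PP$, which is exactly what gives (1) and (2).

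The deformation retraction required for (5) comes from reversing these deformations: on each side region $\side{l}/\mathrm{Stab}(l)$ in $\bar X_Y$, retract onto $l_q/\mathrm{Stab}(l)$. When $l_q$ is disjoint from $\hat\PP$ on a fundamental domain of its stabilizer, this quotient is an open annulus in $X_Y$; otherwise, the finitely many $\mathrm{Stab}(l)$-orbits of punctures on $l_q$ cut $\side{l}$ into a periodic chain of disks, and the quotient is a cycle of open disks meeting at punctures — these are the two cases in (5). For (6), if $\int(\CH_q(\Lambda_Y))$ is a disk then no two distinct boundary $q$-geodesics share a puncture in common, so the quotient map is injective on the interior of the hull and $\hat\iota_q|_{Y'}$ is a homeomorphism onto its image. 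The main obstacle is precisely this pinching behavior: when $l_q$ passes through singularities in $\hat\PP$ on its interior, the side $\side{l}$ degenerates into a string of disks glued at those punctures, and careful bookkeeping is required to extend $\hat\iota_q$ continuously to $\bar X_Y$ through these punctures while ensuring that the homotopy $(h_t)_{t<1}$ itself stays inside $X_Y$ as required by (1).
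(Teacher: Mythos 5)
Your overall strategy is the paper's: identify $Y'$ with $\CH(\Lambda_Y)/\pi_1(Y)$, straighten each boundary geodesic $l$ of the hyperbolic hull to the $q$-geodesic $l_q$ with the same endpoints, use \Cref{cor:side_coherence} to keep the sides disjoint, send $\boundary_0 Y$ to the corresponding completion points, and obtain the retraction of part (5) by collapsing the sides $\side{l}$ onto the $l_q$; the annulus case is likewise handled as in the paper. But there are genuine gaps. First, you claim that the cusp-collapse ``is the only step where the image meets $\hat\PP$'' and deduce (1)--(2) from that; this is false, as you yourself concede in your final sentence: the geodesics $l_q$ may pass through completion points in their interiors, and this is precisely the delicate point of the lemma. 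Declaring that ``careful bookkeeping is required'' to keep $h_t(Y)\subset X_Y$ for $t<1$ and to extend continuously through these punctures is not a proof. The paper resolves this by choosing the homotopy from $l$ to $l_q$ rel endpoints so that it stays in $\HH^2$ until the last instant, and then extending equivariantly over a $\Gamma$-invariant triangulation of $\CH(\Lambda_Y)$ skeleton by skeleton, using contractibility of $\CH_q(\Lambda_Y)$. Your substitute justification --- that the boundary straightenings can be performed ``simultaneously'' because the sides $\side{l}$ are disjoint --- does not work: the track of a homotopy from $l$ to $l_q$ is not supported in $\side{l}$ (nothing prevents $l_q$ from wandering across other hyperbolic boundary geodesics), and in any case you must still extend the boundary homotopy over the interior of $Y'$ and control that the final image is exactly the $q$-hull.

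Second, your flat-strip convention is stated backwards: if $\hat\iota_q(\gamma)$ is the strip boundary geodesic ``farthest from the hull,'' then the strip interior lies inside the image region, contradicting the second clause of (4); the correct choice (already built into the definition of $\CH_q(\Lambda_Y)$, where the entire parallel family is absorbed into $\side{l}$ and deleted) is the strip boundary adjacent to the retained hull. Third, part (6) is asserted rather than proved: the map produced by your construction is a priori only a homotopy equivalence onto the hull quotient, and ``no two distinct boundary $q$-geodesics share a puncture'' does not by itself yield injectivity of $\hat\iota_q|_{Y'}$. The missing step, which the paper supplies, is that when $\int(\CH_q(\Lambda_Y))$ is a disk its quotient is a surface, and the homotopy equivalence $Y'\to \CH_q(\Lambda_Y)/\Gamma$ preserves peripheral structure, hence can be deformed rel boundary to a homeomorphism, which one then takes as the definition of $\hat\iota_q$.
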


\begin{proof}
Let $\Gamma = \pi_1 Y$ and let
$\Lambda =\Lambda_Y \subset \partial \HH^2$ denote the limit set of $\Gamma$. 
As usual, $\CH(\Lambda)/\Gamma$ can be identified with $Y' = Y \ssm \partial_0 Y$. After
isotopy we may assume $\hat\iota:Y'\to \CH(\Lambda)/\Gamma$ is this
identification.

First assume that $Y$ is not an annulus.
Form $\CH_q(\Lambda)$ as above,
and for a boundary geodesic $l$ of
$\CH(\Lambda)$ define $l_q$ and its side $\side{l}$ as in the
discussion above. The quotient of $l_q$ is
a geodesic representative of a component of $\boundary Y$, and 
the quotient of the open side $\openside{l}$ in $X_Y$
is either an open annulus or a
union of open disks joined in a cycle along points in their completion.
The $q$-geodesic may pass
through points of $\hat \PP$,
so that there is a homotopy from $l$ to $l_q$ rel endpoints which
stays in $\HH^2$ until the last instant.

We may equivariantly deform the identity to a map
$\CH(\Lambda) \to \CH_q(\Lambda)$, which takes each $l$ to
$l_q$: since $\CH_q(\Lambda)$ is contractible, it suffices to give a
$\Gamma$-invariant triangulation of $\CH(\Lambda)$ and define the
homotopy successively on the skeleta. This homotopy descends to a map
from $Y'$ to $\CH_q(\Lambda)/\Gamma$, and can be chosen so that the
puncture-parallel boundary components map to the corresponding points of $\PP_Y$. This
gives the desired map $\hat\iota_q$ and establishes properties (1-4).

Using the description of the sides $\side{l}$, we may equivariantly
retract $\overline\HH^2$ to $\CH_q(\Lambda)$, giving rise to the
retraction $r$ of part (5). 

Finally, if the interior of $\CH_q(\Lambda)$ is a disk, then its
quotient is a surface. Our homotopy yields a
homotopy-equivalence of $Y'$ to this surface which preserves peripheral
structure and can therefore be deformed rel boundary to a
homeomorphism. We let $\hat\iota_q$ be this homeomorphism, giving part $(6)$.






When $Y$ is a (nonperipheral) annulus, $\Lambda_Y$ is a pair of points
and we recall from above that $\CH_q(\Lambda)$ is either a flat strip
in $\hat{X}$ which descends to a flat cylinder in $\bar X_Y$, or it is
a single geodesic. The proof in the annular case now proceeds exactly
as above. 
\end{proof}

Let $\iota_q : Y \to \bar X$ be the composition of $\hat \iota_q$ with
the (branched) covering $\bar X_Y \to \bar X$ and set 
$\partial_q Y = \iota_q(\boundary' Y)$. Note that this will be a 1-complex of saddle connections and not necessarily a homeomorphic image of $\boundary' Y$.

\subsection{Fibered faces of the Thurston norm}
A fibration $\sigma\colon M\to S^1$ of a finite-volume hyperbolic
3-manifold $M$ over the circle comes with the following structure:
there is an integral cohomology class in $H^1(M;\Z)$
represented by $\sigma_*:\pi_1M\to \Z$, which is the Poincar\'e dual
of the fiber $F$. There is a 
representation of $M$ as a quotient $F\times\R/\Phi$
where $\Phi(x,t) = (f(x),t-1)$ and  $f:F\to F$ is called the monodromy
map. This map is pseudo-Anosov and has stable and unstable (singular) measured foliations
$\lambda^+$ and $\lambda^-$ on $F$. Finally there is the suspension
flow inherited from the natural $\R$ action on $F\times\R$, and 
suspensions $\Lambda^\pm$ of $\lambda^\pm$ which are flow-invariant 2-dimensional
foliations of $M$. All these objects are defined up to isotopy.

The fibrations of $M$ are organized by the {\em Thurston norm}
$||\cdot||$ on $H^1(M;\R)$ \cite{thurston1986norm} (see also \cite{candel2000foliations}). This norm has a
polyhedral unit ball $B$ with the following properties:
\begin{enumerate}
  \item Every cohomology class dual to a fiber is in the
    cone $\R_+\FF$ over a top-dimensional open face $\FF$ of $B$.

  \item If $\R_+\FF$ contains a cohomology class dual to a fiber
      then {\em every} irreducible integral class in $\R_+\FF$ is dual to a
      fiber. $\FF$ is called a {\em fibered face} and its irreducible integral
      classes are called fibered classes.

  \item For a fibered class $\omega$ with associated fiber $F$, 
         $||\omega||=-\chi(F)$.
\end{enumerate}
In particular if $\dim H^1(M;\R)\ge 2$ and $M$ is fibered then there
are infinitely many fibrations, with fibers of arbitrarily large
complexity.
We will abuse terminology a bit by saying that a fiber (rather than
its Poincar\'e dual) is in $\R_+\FF$. 

The fibered faces also organize the suspension flows and the
stable/unstable foliations: If $\FF$ is a fibered face then there is a
single flow $\psi$ and a single pair $\Lambda^\pm$ of foliations whose leaves
are invariant by $\psi$, such that {\em every} fibration 
associated to $\R^+\FF$ may be isotoped so that its suspension flow is
$\psi$ up to a reparameterization, and the foliations $\lambda^\pm$ for the monodromy of its fiber $F$ are
$\Lambda^\pm\intersect F$.
These results were proven by Fried \cite{fried1982geometry}; see also
McMullen \cite{mcmullen2000polynomial}.

\subsection*{Veering triangulation of a fibered face}
A key fact for us is that the veering triangulation of the manifold
$M$ depends only on the fibered face $\FF$ and not on a particular
fiber. This was known to Agol for his original construction (see
sketch in \cite{agol-overflow}), but
Gu\'eritaud's construction makes it almost immediate.

\begin{proposition}[Invariance of $\tau$] \label{prop:invariance}
Let $M$ be a hyperbolic 3-manifold with fully-punctured fibered face
$\FF$.
Let $S_1$ and $S_2$ be
fibers of $M$ each contained in $\R_+ \FF$
and let $\tau_1$ and $\tau_2$ be the corresponding veering
triangulations of $M$. Then, after an isotopy preserving
transversality to the suspension flow,
$\tau_1 = \tau_2$. 
\end{proposition}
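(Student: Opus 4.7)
The plan is to show that Gu\'eritaud's construction of $\tau_i$ depends only on the triple $(M,\psi,\Lambda^\pm)$, which by Fried's theorem is shared by every fibration with cohomology class in $\R_+\FF$. In the fully-punctured setting, the singular orbits of $\psi$ are precisely the cusps of $M$; call them $\Sigma$, and let $\tilde\Sigma \subset \tilde M$ denote their lifts, a distinguished set of flow lines of the lifted flow $\tilde\psi$. The idea is to reinterpret each ingredient of Gu\'eritaud's recipe intrinsically in $\tilde M$, so that the resulting simplicial complex visibly does not see the fiber.

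First I would translate the notion of a maximal singularity-free rectangle in $\hat S_i$ into a $3$-dimensional ``flow box'' in $\tilde M$. A rectangle $R \subset \hat S_i$ bounded by four leaves of $\tilde\lambda^\pm_i = \tilde\Lambda^\pm \cap \tilde S_i$ sweeps out, under $\tilde\psi$, a region $B_R \subset \tilde M$ whose boundary lies in two leaves of $\tilde\Lambda^+$ and two of $\tilde\Lambda^-$. The rectangle $R$ is singularity-free exactly when $B_R$ misses $\tilde\Sigma$ in its interior, and $R$ is maximal with that property exactly when each of the four faces of $\partial B_R$ is tangent to one orbit of $\tilde\Sigma$. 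Thus the collection of maximal flow boxes in $\tilde M$ is defined purely in terms of $(\tilde M, \tilde\psi, \tilde\Lambda^\pm, \tilde\Sigma)$, and is fiber-independent. Each such box determines an ideal tetrahedron whose four vertices are the tangent orbits and whose edges, faces, and gluings with neighboring boxes (the ``swinging'' of \Cref{edge-swing}) are dictated by the incidence pattern on $\partial B_R$. This produces a $\pi_1(M)$-equivariant ideal triangulation $\tilde\tau$ of $\tilde M$ constructed without any choice of fiber.

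To pass from this intrinsic $\tilde\tau$ to the desired isotopy $\tau_1 \to \tau_2$ in $M$, I would observe that $\tilde S_1$ and $\tilde S_2$ can be connected through surfaces transverse to $\tilde\psi$ (e.g.\ by interpolating between the Poincar\'e-dual cohomology classes in $\R_+\FF$ and taking level sets of their lifts), and that Gu\'eritaud's flattening map is nothing but projection along $\tilde\psi$ onto a chosen transverse surface. Hence for each $i$, $\tau_i$ is recovered from the intrinsic $\tilde\tau$ by flowing it onto $\tilde S_i$. A flow-transverse isotopy from $\tilde S_1$ to $\tilde S_2$ then drags $\tau_1$ simplicially onto $\tau_2$ and descends $\pi_1(M)$-equivariantly to $M$. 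The main point to verify carefully is that the flattening map of \Cref{gueritaud construction} coincides with this flow-projection, so that the maximal flow boxes cut each transverse fiber in exactly its set of maximal singularity-free rectangles; this is essentially built into Gu\'eritaud's definition, but it is the step that needs to be unpacked to make the intrinsic reformulation rigorous.
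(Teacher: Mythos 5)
Your argument is correct and is essentially the paper's proof: the paper identifies each $\til S_i$ with the leaf space of the lifted suspension flow, so your maximal flow boxes in $\til M$ are precisely the preimages of the paper's maximal singularity-free rectangles, and both arguments rest on Fried's theorem that all fibers in $\R_+\FF$ share $(\psi,\Lambda^\pm)$, so the combinatorial structure is fiber-independent. The step you flag as needing unpacking is handled in the paper by observing that the $q$-geodesic edges (which do depend on the fiber) are always transverse to both foliations inside their defining rectangles, which is what produces the transversality-preserving isotopy between the two geometric realizations.
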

\begin{proof}
The suspension flow associated to $\FF$ lifts to the universal cover
$\til M$, and any fiber $S$ in $\R_+\FF$ is covered 
by a copy
of its universal cover $\til S$ in $\til M$ which meets every flow
line transversely, exactly once. Thus we may 
identify $\til S$ with the leaf space $\LL$ of this flow. The lifts
$\til\Lambda^\pm$ of the suspended laminations project to the leaf
space where they are identified 
with the lifts $\til\lambda^\pm$ of $\lambda^\pm$ to $\til S$.

The foliated rectangles used in the construction of $\tau$ from
 $\til{q}$ on $\til{S}$ depend 
 only on the (unmeasured) foliations $\til\lambda^\pm$.
Thus the abstract cell structure of $\tau$ depends
 only on the fibered face $\FF$ and not on the fiber. The map $\pi$ from each tetrahedron
 to its rectangle does depend a bit on the fiber, as we choose
 $q$-geodesics for the edges (and the metric $q$ depends on the
 fiber); but the edges are always mapped to arcs 
 in the rectangle that are transverse to both foliations. It follows
 that there is a transversality-preserving isotopy between the
 triangulations associated to any two fibers.
\end{proof}

\subsection*{Fibers and projections}
We next turn to a few lemmas relating subsurface projections over the various fibers in a fixed face of the Thurston norm ball. 

\begin{lemma}\label{lem:subgroup_projection}
If $\FF$ is a fibered face for $M$
and $Y \to S$ is an infinite covering where 
$S$ is a fiber in $\R_+\FF$ and $\pi_1(Y)$ is finitely generated,
then the projection distance $d_Y(\lambda^-,\lambda^+)$ depends only on $\FF$ and
the conjugacy class of the subgroup $\pi_1(Y) \le \pi_1(M)$ (and not
on $S$).
\end{lemma}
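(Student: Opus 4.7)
The plan is to work in the cover $M_\Gamma \to M$ corresponding to $\Gamma := \pi_1(Y) \le \pi_1(M)$, and to use the suspension flow $\psi$ associated to $\FF$ (which, as the paper has already recorded, depends only on the face) to canonically identify the covers of any two fibers $S, S'$ in $\R_+\FF$ whose fundamental groups contain $\Gamma$.

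The first step is to show that for such a fiber $S$, there is a product decomposition $M_\Gamma \cong S_\Gamma \times \R$, where $S_\Gamma$ is the cover of $S$ corresponding to $\Gamma$ and the $\R$-direction is the flow. Since $S$ is a fiber of a fibration with suspension flow $\psi$, any sheet $\til S$ of the preimage of $S$ in $\til M$ is a global section of $\til\psi$, giving $\til M \cong \til S \times \R$ with $\pi_1(S)$ acting on the first factor alone. Quotienting by $\Gamma \le \pi_1(S)$ therefore yields $M_\Gamma \cong S_\Gamma \times \R$, with $S_\Gamma$ embedded as a global section of the induced flow. Applying the same argument to $S'$, the cover $S'_\Gamma$ is likewise a global section of the same flow on $M_\Gamma$.

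The second step is to produce the identification. Flowing from $S_\Gamma$ to $S'_\Gamma$ along the flow lines gives a well-defined homeomorphism $\phi: S_\Gamma \to S'_\Gamma$. Because $\Lambda^\pm$ are flow-invariant on $M$, their lifts $\til\Lambda^\pm$ to $M_\Gamma$ are flow-invariant, and so $\phi$ carries $\til\Lambda^\pm \cap S_\Gamma$ to $\til\Lambda^\pm \cap S'_\Gamma$. By the description of the flow and laminations recalled before \Cref{prop:invariance}, these intersections are precisely the lifts $\til\lambda^\pm$ of the monodromy laminations on each fiber. Hence $\phi$ induces a simplicial isomorphism $\A(S_\Gamma) \cong \A(S'_\Gamma)$ taking $\pi_{S_\Gamma}(\til\lambda^\pm)$ to $\pi_{S'_\Gamma}(\til\lambda^\pm)$, from which the equality of projection distances is immediate.

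The only real obstacle is the global-section claim in the first step, which requires knowing that the flow $\til\psi$ on $\til M$ has leaf space $\til S$ simultaneously for every fiber in $\R_+\FF$. This transfer between the product structures of different fibrations in the same face is handled by Fried's invariance theorem, already invoked in the proof of \Cref{prop:invariance}.
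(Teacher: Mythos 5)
Your proposal is correct and takes essentially the same route as the paper: the paper identifies $\til S$, for every fiber in $\R_+\FF$, with the leaf space of the lifted flow and then quotients by $\pi_1(Y)$, which is precisely the identification you produce by flowing one section of $M_\Gamma$ onto the other. Both arguments rest on the same inputs — Fried's theorem that each fiber in the cone is a global cross-section to the common suspension flow, and the flow-invariance of $\Lambda^\pm$ — so no further comparison is needed.
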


Note that $Y$ need not correspond to an embedded subsurface of $S$.

\begin{proof}
As in the proof of \Cref{prop:invariance}, $\til S$ can be
identified with the leaf space ${\LL}$ of the flow in $\til M$. 
The action of $\pi_1(M)$  on $\til M$ descends to $\LL$, and 
thus the cover $Y = \til S/\pi_1(Y)$ is identified with the quotient
${\LL}/\pi_1(Y)$ and the lifts of $\lambda^\pm$ to $Y$ are identified
with the images of $\til\Lambda^\pm$ in ${\LL}/\pi_1(Y)$. Thus the
projection $d_Y(\lambda^+,\lambda^-)$ can be obtained without
reference to the fiber $S$.
\end{proof}
This lemma justifies the notation $d_Y(\Lambda^+,\Lambda^-)$ used in
the introduction.

We will also require the following lemma, where we allow maps homotopic to fibers which are not necessarily embeddings.

\begin{lemma}\label{lem:flow_to_fiber_2}
Let  $F$ be a fiber of $M$. Let $Y\subset M$ be a
compact surface and let $h \colon F \to M$ be a map which is
homotopic to the inclusion. Suppose that $h(F) \cap Y$ is inessential
in $Y$, i.e. each component of the intersection is homotopic into the
ends of $Y$. Then the image of $\pi_1(Y)$ is
contained in $\pi_1(F) \vartriangleleft \pi_1(M)$.  
\end{lemma}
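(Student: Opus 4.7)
The plan is to use the circle-valued map $\sigma\colon M \to S^1$ defining the fibration, whose induced $\sigma_*\colon \pi_1(M) \to \mathbb{Z}$ has kernel precisely $\pi_1(F)$. Because $\pi_1(F) \vartriangleleft \pi_1(M)$, the containment $\pi_1(Y) \subset \pi_1(F)$ is base-point independent and is equivalent to showing that $\sigma|_Y \colon Y \to S^1$ is null-homotopic, i.e., that $\sigma_*(\gamma) = 0$ for every loop $\gamma$ in $Y$.

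I would exploit the fact that $\sigma_*(\gamma)$ computes the algebraic intersection number of $\gamma$ with the fiber $F$, because $[\sigma] \in H^1(M;\mathbb{Z})$ is Poincar\'e--Lefschetz dual to $[F] \in H_2(M, \partial M;\mathbb{Z})$. Since $h$ is homotopic to the inclusion $F \hookrightarrow M$, we have $[h(F)] = [F]$, so equivalently $\sigma_*(\gamma) = \gamma \cdot h(F)$, computed by a signed count of intersections.

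After putting $h$ transverse to $Y$, the intersection $h(F) \cap Y$ is realized by $h(h^{-1}(Y))$, with $h^{-1}(Y) \subset F$ a $1$-submanifold (I work with the preimage in $F$ to avoid assuming $h$ is injective). For any loop $\gamma$ in $Y$, each intersection point of $\gamma$ with $h(F)$ in $M$ lies on some component of $h(F) \cap Y$. By hypothesis each such component is homotopic into an end of $Y$, hence inessential as a circle or arc in $Y$. A standard surface-topology argument -- pushing $\gamma$ across the disks bounded by inessential circles, across the annuli between boundary-parallel circles and $\partial Y$, and across half-disks between inessential arcs and $\partial Y$ -- homotopes $\gamma$ inside $Y$ to be disjoint from $h(F) \cap Y$, and thus disjoint from $h(F)$ in $M$. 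It follows that $\gamma \cdot h(F) = 0$, so $\sigma_*(\gamma) = 0$ and $\gamma \in \pi_1(F)$.

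The main subtlety is the push-off step, particularly when $h$ fails to be an embedding and components of the image $h(F) \cap Y$ meet at self-intersections of $h$; in this case one argues one component at a time with $h^{-1}(Y) \subset F$, homotoping $\gamma$ off the image of each component successively. Alternatively, the entire argument can be phrased homologically via Poincar\'e--Lefschetz duality on $Y$: under $H^1(Y; \mathbb{Z}) \cong H_1(Y, \partial Y; \mathbb{Z})$, the restricted class $[\sigma|_Y]$ is represented by the relative $1$-cycle $[h(F) \cap Y]$, which vanishes in $H_1(Y, \partial Y; \mathbb{Z})$ because each of its components is homotopic into $\partial Y$ and hence null-homologous rel $\partial Y$.
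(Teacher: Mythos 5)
Your proposal is correct and follows the same route as the paper: the paper also evaluates the cohomology class $\zeta$ dual to $F$ (whose kernel in $\pi_1(M)$ is exactly $\pi_1(F)$) on loops in $Y$, pushes those loops off $h(F)$ using the inessentiality of $h(F)\cap Y$, and concludes $\zeta$ vanishes on $\pi_1(Y)$. You have simply spelled out the transversality, push-off, and duality details that the paper leaves implicit.
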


\begin{proof}
Let $\zeta$ be the  cohomology class dual to $F$. Since $h(F)$
meets $Y$ inessentially, every loop in $Y$ can be pushed off of $h(F)$
so $\zeta$ vanishes on $\pi_1(Y)$. But the
kernel of $\zeta$ in $\pi_1(M)$ is exactly $\pi_1(F)$, so the image of
$\pi_1(Y)$ is in $\pi_1(F)$. 
\end{proof}

\section{Sections and pockets of the veering triangulation}
\label{sections}

In this section the surface $X$ is fully-punctured.
A {\em section} of the veering triangulation $\tau$ is an 
embedding $(X,T) \to (X \times \R, \tau)$ which is simplicial with
respect to an ideal triangulation $T$ of $X$, and is a section of the
fibration $\pi \colon X \times \R \to X$ (hence transverse to the vertical flow). By \emph{simplicial} we 
mean that the map takes simplices to simplices.
The edges of $T$ are saddle connections of $q$ that are also edges of $\tau$ (i.e.
those which span singularity-free rectangles), and indeed any 
triangulation by $\tau$-edges gives rise to a section. 
We will abuse
terminology a bit by letting $T$ denote both the triangulation and the
section.

A {\em diagonal flip} $T\to T'$ between sections is an isotopy that
pushes $T$ through a single tetrahedron of $\tau$, either above it or below
it. Equivalently, if $R$ is a maximal rectangle and $Q$ its associated
tetrahedron, the bottom two faces of $Q$ might appear in $T$, in which
case $T'$ would be obtained by replacing these with the top two
faces. This is an upward flip, and the opposite is a downward flip.
We will refer to the transition as both a \emph{diagonal flip/exchange} and a \emph{tetrahedron move}, depending on the perspective.

An edge $e$ of $T$ can be flipped downward exactly when it is the
tallest edge, with respect to $q$, among the edges in either of the
two triangles adjacent to it. This makes $e$ the top edge of a
tetrahedron (i.e. the diagonal of a quadrilateral that connects the
horizontal sides of the corresponding rectangle). 
Similarly it can be flipped upward when it is the widest
edge among its neighbors.
See \Cref{flippability2}.

\realfig{flippability2}{The edge $e$ is upward flippable, $g$ is
  downward flippable, and $f$ is not flippable.}

In particular it follows that every section has to admit both an
upward and downward flip -- simply find the tallest edge and the
widest edge.

However it is not a priori obvious that a section even
exists. Gu\'eritaud gives an argument for this and more: 

\begin{lemma}[\cite{gueritaud}]\label{gue-sweep}
There is a sequence of sections $\cdots \to T_i\to
  T_{i+1}\to\cdots$ separated by upward diagonal flips, which sweeps through the
  entire manifold  $(X\times\R,\tau)$.
  Moreover, when $(X\times\R,\tau)$ covers the manifold $(M,\tau)$, this sequence is invariant by
  the deck translation $\Phi$.
\end{lemma}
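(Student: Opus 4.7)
The plan is to build the sequence by induction, then verify the sweep and periodicity properties, using the flippability observation recorded immediately before the statement as the main engine.

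For the inductive construction, I would first produce an initial section $T_0$. Using the edge-swing description (the $Q_-, Q_1, \ldots, Q_m, Q_+$ structure around each $\tau$-edge), one may take the two top faces of any single tetrahedron $Q_0$ as a local piece transverse to the flow, then extend outward by attaching the top face of the unique tetrahedron immediately above each $\tau$-edge on the boundary of what has been built. Iterating equivariantly under $\pi_1(X)$ yields a triangulation of $X$ whose triangles are $\tau$-triangles, which is to say a section $T_0$. Given $T_0$, the observation before the statement—every section contains an upward-flippable edge (the $q$-widest in some triangle) and a downward-flippable edge (the $q$-tallest)—lets me inductively produce $T_i$ for all $i \in \Z$, each $T_{i+1}$ being obtained from $T_i$ by flipping a widest edge upward. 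Each flip inserts exactly one new tetrahedron between consecutive sections.

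To show the bi-infinite sequence sweeps $(X\times\R,\tau)$, fix any flow line $\gamma = \{x\} \times \R$ and let $p_i = T_i \cap \gamma$. By construction $p_i$ is non-decreasing in $i$, and each flip either leaves $p_i$ fixed (if the flipped tetrahedron does not meet $\gamma$) or pushes $p_i$ past exactly one tetrahedron. Suppose for contradiction that $p_i$ is bounded above by some $t_0$ as $i \to +\infty$. Then for all sufficiently large $i$, the flips take place in the region between $T_0$ and $X \times \{t_0\}$ along $\gamma$; more globally, they are confined to a region containing only finitely many tetrahedra modulo $\pi_1(X)$ and admitting only finitely many $\tau$-simplicial sections agreeing with $T_0$ outside. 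Some section must repeat, but upward flips are strictly monotone in the partial order on tetrahedra given by "added above," contradicting recurrence. Symmetrically $p_i \to -\infty$ as $i \to -\infty$, so every tetrahedron of $\tau$ is traversed.

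For the moreover, work in the quotient $M$, where $\tau$ has only finitely many tetrahedra and hence only finitely many triangulations of a fiber $F$ by $\tau$-edges. Projecting $T_0$ to $M$ and iteratively flipping upward produces a sequence in $M$ that, by the same finiteness-plus-monotonicity argument, must close up after some $n$ flips. Lifting this cyclic sequence back to $X \times \R$ with a choice of initial lift yields $T_{i+n} = \Phi(T_i)$ for every $i$, which is the claimed $\Phi$-invariance. The main obstacle is the sweep argument: one has to rule out the degenerate possibility that the upward flips concentrate in a bounded subregion and fail to advance along some flow line, and the essential tool is the finiteness of $\tau$ modulo the relevant deck group combined with the strict monotonicity of upward flips.
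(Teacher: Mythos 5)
Note first that the paper does not prove this lemma at all: it is quoted from Gu\'eritaud, whose argument constructs the sweep explicitly (each maximal rectangle becomes a square at a unique time under the flow rescaling of the flat structure, and the layers are read off from these times, which also makes $\Phi$-equivariance automatic); the paper's own contribution is only the independent existence argument for sections in proof two of \Cref{lem:extension}. Your from-scratch attempt has a genuine gap at its central step, the sweep. Boundedness of $p_i=T_i\cap\gamma$ along one flow line only constrains the finitely many flips whose tetrahedra cross $\gamma$; it does not ``more globally'' confine the flips to a region with finitely many tetrahedra. (Also, finiteness of tetrahedra in $X\times\R$ is modulo $\Phi$, not modulo $\pi_1(X)$: sections are copies of $X$ in $X\times\R$, which is already the $\pi_1(X)$-quotient.) The scenario you must exclude is precisely that, from some index on, the widest edge is never an edge of the (eventually constant) triangle of $T_i$ over the basepoint of $\gamma$, so the upward flips advance forever elsewhere without repetition and without ever crossing $\gamma$ again; your finiteness-plus-monotonicity contradiction never gets off the ground. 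To make the ``flip a widest edge'' rule work one would need something quantitative, e.g.\ that the edge created by an upward flip is strictly narrower, that consequently the maximal width of $T_i$ tends to $0$, and that once every edge of a section is narrower than the bottom edge of a given tetrahedron the section lies above it; the middle step is not free, since saddle connections of width bounded below but unbounded height exist in abundance and their widths need not be discrete, so the nonincreasing maximal width could a priori stabilize above $0$. None of this is supplied, and it is exactly where the real content of Gu\'eritaud's proof lies.

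Two further points. The construction of the initial section $T_0$ is not justified: the tetrahedron ``immediately above'' a boundary edge $e$ has $e$ as its \emph{bottom} edge, so its top faces do not even contain $e$; with the intended reading (choosing a face from the swing sequence around $e$) you still must verify that the local choices made along different edges and around each ideal vertex close up into a single embedded $\tau$-triangulation of $X$ and that the process terminates --- this is why the paper proves existence separately (proof two of \Cref{lem:extension}), and appealing to $\pi_1(X)$-equivariance is beside the point since no universal cover is involved. Finally, the pigeonhole argument in $M$ only shows the projected sequence revisits a triangulation, i.e.\ $T_j=\Phi^k(T_i)$ for some $k\ge 1$; concatenating translates of that segment yields a sequence invariant under $\Phi^k$, not under $\Phi$ as the lemma asserts, and upgrading to genuine $\Phi$-invariance (for instance by flipping upward from $T_0$ to $\Phi(T_0)$, which requires knowing $\Phi(T_0)$ can be reached by upward flips) needs an argument you have not given.
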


We remark that Agol had previously proven a version of \Cref{gue-sweep} with his original definition of the veering triangulation \cite{agol2011ideal}.

For an alternative proof that sections exist, see the second proof of \Cref{lem:extension}.
We remark that \Cref{gue-sweep} does not give a complete picture of all possible
sections of $\tau$. In this section we will establish a bit more
structure. 

\medskip

For a subcomplex $K \le \tau$, denote by $T(K)$ the collection of
sections $T$ of $\tau$ containing the edges of $K$. A necessary
condition for $T(K)$ to be nonempty is that $\pi(K)$ is an
embedded complex in $X$ composed of $\tau$-simplices. We will continue
to blur the distinction between $K$ and $\pi(K)$. 

Our first result states that the necessary condition is sufficient:

\begin{lemma}[Extension lemma] \label{lem:extension}
Suppose that $E$ is a collection of
$\tau$-edges in $X$  with
pairwise disjoint interiors. Then $T(E)$ is nonempty.
\end{lemma}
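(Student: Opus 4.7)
The strategy is to extend $E$ to a full ideal triangulation of $X$ whose edges are all $\tau$-edges; by the opening discussion of this section, any such triangulation lifts canonically to a section. So the whole problem reduces to producing a triangulation by $\tau$-edges containing $E$.

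I would take $E^{\ast} \supseteq E$ to be a \emph{maximal} collection of pairwise disjoint $\tau$-edges in $X$. Such a maximal collection exists: in the mapping-torus setting $X$ carries only finitely many $\tau$-edges modulo the $\Z$-action on $X\times\R$ coming from the monodromy, so maximality is achieved by a finite induction (and by Zorn's lemma in general). I then claim that $E^{\ast}$ triangulates $X$. If not, some complementary region $P$ of $X \ssm E^{\ast}$ fails to be an ideal triangle — either $\partial P$ is a cycle of four or more $\tau$-edges, or $P$ is topologically nontrivial. In either case I would produce a $\tau$-edge whose endpoints are punctures on $\partial P$ and whose interior is contained in $\mathrm{int}(P)$, contradicting the maximality of $E^{\ast}$.

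To find this missing $\tau$-edge, I pass to the universal cover $\widetilde X$ with its flat metric $\widetilde q$ and pick a lift $\widetilde P$ of $P$. Its boundary is a union of saddle connections between vertices in $\hat\PP$, and because $P$ is a complementary region and $X$ is fully-punctured, $\mathrm{int}(\widetilde P)$ contains no singularities of $\widetilde q$. Consequently the foliations $\widetilde\lambda^\pm$ foliate $\widetilde P$ by leaves which are genuine line segments away from $\partial \widetilde P$. Given a carefully chosen pair of non-adjacent vertices $u,v$ on $\partial \widetilde P$, I would trace the horizontal leaf of $\widetilde\lambda^-$ from $u$ and the vertical leaf of $\widetilde\lambda^+$ from $v$, extending them until they meet, and check that the resulting closed rectangle $R$ is contained in $\widetilde P$. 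Since $\mathrm{int}(R)\subseteq \mathrm{int}(\widetilde P)$ contains no singularities, $R$ is a singularity-free rectangle, and the diagonal of $R$ running through $\widetilde P$ is then a saddle connection spanning a singularity-free rectangle — i.e.\ a $\tau$-edge — lying inside $\widetilde P$. Projecting back to $X$ yields the desired $\tau$-edge in $P$.

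\textbf{Main obstacle.} The delicate geometric point is choosing $u,v$ so that the inscribed rectangle $R$ actually stays inside $\widetilde P$ rather than escaping through the interior of one of the boundary saddle connections. I expect to use the transitive ordering $e > f$ on $\tau$-edges (introduced right before \Cref{edges-cross}) together with the observation that each boundary edge of $\widetilde P$ is itself the diagonal of a singularity-free rectangle. Concretely, among the boundary $\tau$-edges one selects the widest (smallest slope) and the tallest (largest slope); their rectangles force the existence of a non-adjacent vertex pair whose horizontal/vertical leaves first meet inside $\widetilde P$, which is precisely what rules out the rectangle from leaving $\widetilde P$ prematurely. A minor auxiliary point is ruling out the topologically nontrivial case for $P$, which can be handled by the same rectangle-inscribing construction applied to a non-trivial loop in $P$ rather than a pair of vertices.
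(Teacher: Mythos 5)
Your reduction (pass to a maximal collection $E^{\ast}\supseteq E$ of disjoint $\tau$-edges and show it triangulates, then invoke the fact that a triangulation by $\tau$-edges gives a section) is fine as a frame, but it leaves the entire content of the lemma unproved: the claim that a non-triangular complementary region $P$ of $E^{\ast}$ must contain a $\tau$-edge through its interior \emph{is} the lemma, and your argument for it is only a hoped-for construction. You propose to trace a horizontal leaf from one vertex $u$ of $\partial\widetilde P$ and a vertical leaf from another vertex $v$ and to ``check that the resulting closed rectangle $R$ is contained in $\widetilde P$,'' with the choice of $u,v$ governed by the widest and tallest boundary edges. You yourself flag this as the main obstacle, and the mechanism offered (``their rectangles force the existence of a non-adjacent vertex pair whose leaves first meet inside $\widetilde P$'') is an assertion, not an argument: leaves emanating from a vertex can exit $\widetilde P$ through the interior of a boundary saddle connection, there may be several separatrices to choose from at each vertex, and nothing in the slope ordering by itself prevents the inscribed rectangle from leaving the region. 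Moreover the ``topologically nontrivial $P$'' case, which you defer as minor, is the same unproved claim in disguise (one still needs a $\tau$-edge, not merely a loop or rectangle, inside $P$), so it cannot be waved away. A smaller quibble: finiteness of $\tau$-edges modulo the monodromy does not by itself give existence of a maximal disjoint collection; what does is that disjoint, pairwise non-isotopic essential arcs on $X$ are bounded in number (and distinct saddle connections are non-isotopic by CAT(0) uniqueness), but this is easily repaired.

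For comparison, the paper closes exactly this gap in two different ways, neither of which needs your global choice of a vertex pair. The first proof starts from a section lying above all of $E$ (supplied by \Cref{gue-sweep}) and flips downward: by \Cref{lem:down_flip}, as long as some edge of $E$ is crossed from above there is a downward-flippable edge not in $E$, and local finiteness makes the descending flips sweep past every edge below the starting section, so eventually $E$ is contained in the section. The second proof is purely local at a single boundary edge $e$ of a non-triangular complementary region $D$: it takes the fan of tetrahedra $Q_-,Q_1,\dots,Q_m,Q_+$ swinging around $e$ on the side of $D$ (\Cref{edge-swing}), observes that the disjoint edges of $E$ must cross the defining singularity-free rectangle of any one tetrahedron consistently (all vertically or all horizontally), propagates this consistency across shared faces, and gets a contradiction because $Q_-$ can only be crossed vertically and $Q_+$ only horizontally; hence some triangular face lies in $D$ and donates a new edge through $\int(D)$. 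If you want to salvage your outline, the cleanest fix is to replace your rectangle-inscribing step by this fan argument.
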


The second states that $T(K)$ is always connected by tetrahedron
moves. This includes in particular the case of $T(\emptyset)$, the set
of all sections.

\begin{proposition}[Connectivity] \label{prop:connect}
  If $K$ is a collection of
$\tau$-edges in $X$  with
pairwise disjoint interiors,
  then $T(K)$ is connected
  via tetrahedron moves. 
\end{proposition}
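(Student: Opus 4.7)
The plan is to induct on the number of $\tau$-tetrahedra in the compact pocket $P(T_1, T_2) \subset X \times \R$ lying strictly between the two sections. The base case $|P(T_1,T_2)| = 0$ forces $T_1 = T_2$. For the inductive step, I decompose $P = R^+ \cup R^-$, where $R^+$ is the union of those tetrahedra whose interior lies above $T_2$ and below $T_1$, and $R^-$ is the symmetric region. The goal is then to find a single tetrahedron move from $T_1$ or $T_2$ that lands back in $T(K)$ and reduces $|P|$.

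Assume without loss of generality that $R^+ \neq \emptyset$, and let $Q$ be a topmost tetrahedron of $R^+$ --- one whose top faces are not shared with another tetrahedron of $R^+$ above. Then both top faces of $Q$ must lie in $T_1$, so $T_1$ admits a downward tetrahedron move through $Q$, replacing the top edge of $Q$ by its bottom edge. If this top edge does not lie in $K$, the new section lies in $T(K)$ and the pocket loses one tetrahedron, closing the induction. Three analogous families of candidate moves are available in parallel: upward moves of $T_1$ through bottommost tetrahedra of $R^-$, downward moves of $T_2$ through topmost tetrahedra of $R^-$, and upward moves of $T_2$ through bottommost tetrahedra of $R^+$.

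The main obstacle is the possibility that every candidate move in all four families has its flipped edge in $K$. To handle this, I would analyze the local structure around such a blocking edge $e \in K$ using the picture of \Cref{edge-swing}. Since $K \subset T_1 \cap T_2$, but the relevant extremal tetrahedron of $P$ adjacent to $e$ cannot be in $P$ unless $T_1$ and $T_2$ assign different pairs of adjacent triangles to $e$, the link of $e$ in $\tau$ is split by $T_1$ and $T_2$ into (up to) four arcs corresponding to being above or below each section. I would argue that these arcs must contain tetrahedra producing unblocked candidate moves of a different type at some other edge, so that the four families cannot simultaneously be blocked by $K$: in effect, each time a $K$-edge obstructs one kind of move, the geometry of the link forces $R^+$ or $R^-$ to harbor a tetrahedron adjacent to $e$ enabling an available move of another kind. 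I expect this combinatorial case analysis around in-conflict $K$-edges to be the most delicate step of the proof; once it succeeds, any unblocked move reduces $|P|$ and the induction closes.
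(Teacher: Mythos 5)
Your overall strategy --- induct on the number of tetrahedra between the two sections and remove one at a time by a flip that avoids $K$ --- is sound and close in spirit to the paper's argument, but the step you yourself flag as ``the most delicate'' is precisely the step you have not carried out, and as written this is a genuine gap. The entire content of the proposition is that one can always flip \emph{without disturbing $K$}; you reduce this to the claim that the four families of candidate moves cannot all be blocked by $K$-edges, and then offer only an expectation that a case analysis of the link of a blocking edge (\Cref{edge-swing}) ``must'' produce an unblocked move. No configuration is actually ruled out and no argument is given, so the induction does not close.

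The good news is that the obstruction you fear never occurs, and seeing this is far easier than the case analysis you anticipate. If $Q$ is a topmost tetrahedron of $R^+$ (interior above $T_2$ and below $T_1$), the edge removed by the downward flip of $T_1$ through $Q$ is the top edge $e$ of $Q$, and $Q$ is the \emph{unique} tetrahedron having $e$ as its top edge. If $e$ lay in $K\subset T_1\cap T_2$, then --- by the local picture around an edge used in the proof of \Cref{lem:slope_drop} --- the tetrahedron with top edge $e$ would lie locally below any section containing $e$, in particular below $T_2$, contradicting $Q\in R^+$. The symmetric statement (unique tetrahedron with a given bottom edge lies above any section containing it) handles the other three families, so every one of your candidate flips is automatically unblocked and the induction closes. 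The paper avoids the issue by a different selection mechanism: in a pocket where $T$ lies above $T'$, \Cref{lem:slope_drop} together with \Cref{lem:down_flip} produces a downward-flippable edge of $T$ that \emph{crosses} an edge of $T'$; such an edge cannot belong to $K$, since $K$-edges lie in both sections and cross nothing in either. Either repair is short, but as submitted your proof is incomplete at its crucial point.
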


\subsection*{Finding flippable edges}
Let $T$ be a section and let $\sigma$ be an edge of $\tau$, which is
not an edge of $T$. Any edge $e$ of $T$ crossing $\sigma$ must do so
from top to bottom ($e>\sigma$) or left to right ($e<\sigma$), as in
 \Cref{veering defs}, and we further note that all 
edges of $T$ that cross $\sigma$ do it consistently, all top-bottom or all
left-right, since they are disjoint from each other.

\begin{lemma} \label{lem:down_flip}
Let $T$ be a section and suppose that an edge $\sigma$ of $\tau$ is
crossed by an edge $e$ of $T$. If $e>\sigma$, then
there is an edge of $T$ crossing $\sigma$ which is downward flippable.
Similarly if $e<\sigma$ then
there is an edge of $T$ crossing $\sigma$ which is upward flippable.
\end{lemma}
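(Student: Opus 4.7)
My plan is to prove the downward-flip case; the upward-flip case is the symmetric statement obtained by interchanging the roles of the horizontal and vertical foliations. For a $\tau$-edge $e$, write $\Delta y(e)$ for the vertical projection of $e$, equivalently the height of its spanning rectangle $R_e$. The key geometric input will be: if $e>\sigma$, then by definition $R_e$ crosses $R_\sigma$ from top to bottom, so $R_e$ protrudes both above and below $R_\sigma$; the two endpoints of $e$ are the two corners of $R_e$ joined by the diagonal $e$, so they must lie strictly one above and one below $R_\sigma$ in the $q$-metric. Hence every edge of $T$ that crosses $\sigma$ has one endpoint above and one below $R_\sigma$.

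Enumerate the edges of $T$ crossing $\sigma$, in order along $\sigma$, as $f_1,\dots,f_k$, and let $\Delta_0,\dots,\Delta_k$ be the triangles of $T$ traversed by $\sigma$, so that $f_i$ is the shared edge of $\Delta_{i-1}$ and $\Delta_i$. I will choose $j$ so that $\Delta y(f_j)=\max_i\Delta y(f_i)$, and claim that this $f_j$ is strictly tallest in each of the adjacent triangles $\Delta_{j-1}$ and $\Delta_j$, hence downward flippable.

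Consider $\Delta_{j-1}$; the analysis of $\Delta_j$ is identical. Its three vertices are the two endpoints of $f_j$ plus a third vertex $w$, which for $j>1$ is the common endpoint of $f_{j-1}$ and $f_j$. By the observation above, $w$ lies strictly above or strictly below $R_\sigma$; without loss of generality $w$ is above. Then the other endpoints $w_{j-1}$ of $f_{j-1}$ and $w_j$ of $f_j$ both lie below $R_\sigma$, and the third edge of $\Delta_{j-1}$ is $c_{j-1}=w_{j-1}w_j$, joining two vertices both below $R_\sigma$. A direct computation of vertical projections yields
\[
\Delta y(c_{j-1}) \;=\; |y_{w_{j-1}}-y_{w_j}| \;=\; \Delta y(f_j)-\Delta y(f_{j-1}),
\]
where the second equality uses $y_{w_{j-1}}\ge y_{w_j}$, which is exactly the condition $\Delta y(f_j)\ge\Delta y(f_{j-1})$ guaranteed by our choice of $j$. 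Since the standing hypothesis forbids horizontal saddle connections, $c_{j-1}$ is not horizontal, so $\Delta y(c_{j-1})>0$, and thus $\Delta y(f_j)>\Delta y(f_{j-1})$ strictly. Rearranging the identity and using $\Delta y(f_{j-1})>0$ (by the same no-horizontal-saddle-connections hypothesis applied to $f_{j-1}$) also gives $\Delta y(c_{j-1})<\Delta y(f_j)$ strictly. So $f_j$ is strictly tallest in $\Delta_{j-1}$.

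The boundary case $j=1$ is simpler: the third vertex of $\Delta_0$ is the endpoint $s_1$ of $\sigma$ itself, which lies on $\partial R_\sigma$, and the two edges of $\Delta_0$ incident to $s_1$ have vertical projections that sum to $\Delta y(f_1)$, so each is strictly less than $\Delta y(f_1)$. The case $j=k$ (and the analysis of $\Delta_j$) is handled symmetrically; the only extra bookkeeping is that $f_j$ and $f_{j+1}$ may share either endpoint of $f_j$, giving two subcases in the vertex-labelling, but both yield the same identity after relabeling. I do not foresee any serious obstacle beyond this case enumeration: everything reduces to the single observation that an edge $>\sigma$ spans a pair of endpoints on opposite sides of $R_\sigma$, and the maximum-$\Delta y$ choice of $f_j$ then dominates the vertical extents of the other edges by the displayed identity.
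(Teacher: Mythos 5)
Your proof is correct and takes essentially the same route as the paper: both arguments select a crossing edge of maximal $q$-height and verify that it is the tallest edge in each of its two adjacent triangles (the paper by showing the tallest edge of each adjacent triangle is forced, via its bounding rectangle, to cross $\sigma$ and hence coincide with the chosen edge; you by the explicit height identity, with the endpoint triangles of $\sigma$ treated separately). Aside from the garbled labelling sentence for $\Delta_{j-1}$ (the third vertex is the other endpoint of $f_{j-1}$, not the shared vertex $w$, as your subsequent computation in fact uses) and the unstated but easy fact that all edges of $T$ crossing $\sigma$ cross it consistently, the argument is sound.
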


\begin{proof}
Assuming the crossings of $\sigma$ are top to bottom,
let $e$ be the edge crossing $\sigma$ that has largest height with
respect to $q$. Let $D$ be a triangle of $T$ on either side of
$e$ and let $f$ be its tallest edge. 
Drawing the rectangle $M$ in which $D$ is inscribed (\Cref{tallest-crossing})
one sees that $R$, the rectangle of $\sigma$, is forced to cross it
from left to right. Hence, the edge $f$ must also cross $\sigma$.
Therefore, $f=e$ by choice of $e$.
It follows that $e$ is a downward flippable edge. 
\end{proof}

\realfig{tallest-crossing}{The tallest $T$-edge crossing $\sigma$ must
  also be tallest in its own triangles.}

\subsection*{Pockets}
Let $T$ and $T'$ be
two sections and $K$ their intersection, as a subcomplex in $X\times\R$. Because both sections are embedded copies of $X$
transverse to the suspension flow, their union $T \cup T'$ divides
$X \times \R$ into two unbounded regions and some number of bounded
regions. Each bounded region $U$ is a union of tetrahedra bounded by two isotopic
subsurfaces of $T$ and $T'$, which correspond to a component $W$ of
the complement of $\pi(K)$ in $X$. The isotopy is obtained by
following the flow, and if it takes the subsurface of $T'$ upward to
the subsurface of $T$ we say that 
{\em $T$ lies above $T'$ in $U$}. We call $U$ a \emph{pocket over
  $W$}, and sometimes write $U_W$. We call $W$ the \emph{base} of the pocket $U$.

\begin{lemma}\label{lem:slope_drop}
 With notation as above, $T$ lies above $T'$ in the pocket $U_W$ if and only if,
 for every edge $e$ of $T$ in $W$ and edge $e'$ of $T'$ in $W$, if $e$ and $e'$ cross then $e>e'$.
\end{lemma}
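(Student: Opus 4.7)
The plan is to prove the forward direction; the converse follows by swapping $T$ and $T'$.

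Suppose $T$ lies above $T'$ in $U_W$, and let $e\in T$ and $e'\in T'$ be edges with interiors in $W$ and with $\pi(e)\cap\pi(e')$ containing a point $x$. As sections of $\pi$, both $T$ and $T'$ meet the vertical fiber over $x$ in single points, with the $T$-point strictly above the $T'$-point by hypothesis. Since $x$ lies in the interior of $\pi(e)$ and of $\pi(e')$, the fiber meets $e$ and $e'$ at precisely those two points, so $e$ sits strictly above $e'$ in the flow direction at the crossing.

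It thus suffices to prove the following local principle: \emph{for any two $\tau$-edges $a,b$ in $X\times\R$ whose projections cross transversely at a point $x$, $a$ lying above $b$ at $x$ in the flow direction implies $a>b$.} The base case is a single veering tetrahedron $Q$: its top edge $g_Q$ and bottom edge $f_Q$ are the two diagonals of the flattening rectangle $\pi(Q)$; the top edge connects the horizontal sides (hence $\sl(g_Q)>\sl(f_Q)$, i.e., $g_Q>f_Q$) while sitting above $f_Q$ in the flow direction by the Gu\'eritaud orientation convention of \Cref{veering defs}. For the general case I would argue by induction on the number of tetrahedra in a pocket separating $a$ from $b$ at $x$, which exists by applying \Cref{lem:extension} and \Cref{prop:connect} to sections containing $a$ and $b$ respectively. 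The inductive step peels off a tetrahedron at the bottom of the separating pocket via an upward flip and combines the tetrahedron base case with the transitivity of $>$ noted in \Cref{veering defs}.

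The main obstacle is the case where an upward flip introduces a new crossing: the new top edge $g_Q$ of the flipped tetrahedron may cross a fixed $\tau$-edge $\sigma$ even when its replaced bottom edge $f_Q$ does not, so the chain $g_Q>f_Q>\sigma$ is unavailable. To treat this case I would compare the rectangles $\pi(Q)=R_{g_Q}$ and $R_\sigma$ directly: the hypothesis that $\pi(\sigma)$ misses $\pi(f_Q)$ constrains the pair of sides of $\pi(Q)$ through which $\pi(\sigma)$ enters and exits, and the singularity-free requirement on $R_\sigma$ then forces its crossing type with $\pi(Q)$ to be the one yielding $g_Q>\sigma$ by the slope dichotomy from \Cref{veering defs}. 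Weaving this direct comparison into the inductive step in the one problematic case completes the argument.
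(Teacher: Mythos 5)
Your reduction to the local principle (a $\tau$-edge lying above a crossing $\tau$-edge in the flow direction is the more vertical one), the single-tetrahedron base case, and the rectangle comparison in your final paragraph are all sound; in particular, the claim that an edge $\sigma$ crossing the top edge $g_Q$ of a tetrahedron but not its bottom edge $f_Q$ must satisfy $g_Q>\sigma$ does follow from the spanning-rectangle analysis you sketch. The genuine problem is the engine of your induction: you invoke \Cref{prop:connect} to produce the flip sequence sweeping through the pocket separating $a$ from $b$. But \Cref{prop:connect} is proved \emph{from} \Cref{lem:slope_drop} (together with \Cref{lem:down_flip}): one needs to know that the top of a pocket dominates the bottom in the slope order in order to find a flippable edge inside the pocket at all. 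So, as written, your argument is circular. Nor can you substitute \Cref{lem:down_flip} directly, since its hypothesis is phrased in terms of the slope relation $>$, which at this stage you possess only in its flow-height form; producing an upward-flippable edge of the bottom section whose flip stays inside the separating pocket, without already knowing the slope comparison, is precisely the content you would have to re-prove.

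The circularity is avoidable, and the paper avoids it without any flip machinery: starting from an edge $e$ of $T$ over $W$, take the tetrahedron $Q$ having $e$ as its top edge; $Q$ lies inside the pocket, its bottom edge $e_1$ satisfies $e>e_1$, and one repeats with the tetrahedron having $e_1$ as its top edge. Finiteness of the pocket forces this chain to terminate at an edge of $T'$, and transitivity of $>$ together with the observation (paragraph before \Cref{lem:down_flip}) that all edges of $T'$ crossing $e$ do so consistently gives $e>e'$ for every crossing edge $e'$ of $T'$; the converse is by exchanging $T$ and $T'$, as you say. If you want to keep your flip-based induction, you must first establish, independently of \Cref{lem:slope_drop}, that the pocket between two sections can be swept by tetrahedron moves (or work along the particular monotone sweep of \Cref{gue-sweep} and argue there); otherwise the step ``peel off a tetrahedron at the bottom of the separating pocket via an upward flip'' has no justification available at this point in the development.
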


Note that, for each edge $e$ of $T$ in $W$ there is in fact an edge $e'$ of
$T'$ in $W$ which crosses $e$, since both $T$ and $T'$ are
triangulations, with no common edges in $W$.

\begin{proof}
Suppose that $T$ lies above $T'$ in $U_W$ and let $e$ be an edge of
$T$ in $W$; hence, it is in the top boundary of 
$U$. Let $Q$ be the tetrahedron of $\tau$ for which $e$ is the top
edge. Via the local picture around $e$ (see \Cref{veering defs} and 
\Cref{edge-swing}), we see that $Q$ lies locally
below $T$. Its interior is of course disjoint from $T$ and $T'$ (and
the whole $2$- skeleton), hence it is inside $U$. Let $e_1$ be the
bottom edge of $Q$.
Note $e > e_1$. If $e_1$ is in $T'$,
stop (with $e' = e_1$). Otherwise it is in the interior of $U$, and we can repeat with
the tetrahedron for which $e_1$ is the top edge. We get a sequence of
steps terminating in some $e'$ in $T'$, which must be in the boundary
of $U$, and conclude $e > e'$ (by the transitivity of $>$ as in
\Cref{veering defs}). Now from the paragraph before \Cref{lem:down_flip},
the same slope relation holds for every edge of $T'$
crossing $e$, hence giving the first implication of the lemma. For the
other direction, exchange the roles of $T$ and $T'$ in the proof. 
\end{proof}

\subsection*{Connectedness of $T(K)$}
We can now prove \Cref{prop:connect}.
\begin{proof}
Let us consider $T$, $T'$ in $T(K)$. Let $U$ be one of the pockets,
and suppose $T$ lies above $T'$ in $U$. \Cref{lem:slope_drop} together
with \Cref{lem:down_flip} implies that $T$ has a downward flippable
edge $e$ which crosses an edge of $T'$ that is in $W$. In particular
$e$ itself is in $W$.
Performing this flip we reduce the number of tetrahedra contained in pockets. Thus a finite number of moves will take $T$ to $T'$, without disturbing $K$.
\end{proof}

As a consequence of \Cref{prop:connect} and its proof we have: 
\begin{corollary}\label{cor:top}
If $K$ is a nonempty subcomplex of $\tau$ and $T(K) \neq \emptyset$,
then there are unique sections $T^+(K)$ and  $T^-(K)$ in  $T(K)$ such
that every $T \in T(K)$ can be upward flipped to $T^+(K)$ and downward flipped
to $T^-(K)$. 
\end{corollary}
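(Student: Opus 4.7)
The proof constructs $T^+(K)$ as the unique maximum of $T(K)$ under the partial order $T \preceq T'$ meaning $T'$ is reachable from $T$ by a finite sequence of upward tetrahedron moves inside $T(K)$; the argument for $T^-(K)$ follows by reversing the flow direction. This is a genuine partial order because every upward flip strictly enlarges the region of $X \times \R$ lying below the section by adjoining exactly one new tetrahedron, so no nontrivial composition of upward flips can return to its starting point. In particular, uniqueness of $T^+(K)$ is immediate: if $T^+$ and $T^{+'}$ both satisfy the conclusion, each admits an upward-flip path to the other, forcing $T^+ = T^{+'}$.

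For existence, start from any $T_0 \in T(K)$ (which exists by nonemptiness) and greedily perform upward flips within $T(K)$ until none is available, yielding a candidate section $T^+(K) \in T(K)$ that admits no upward flip inside $T(K)$. Termination is the key technical step: because $K$ is nonempty, pick an edge $e_0 \in K$; every section in $T(K)$ is pinned at the fixed $X \times \R$-position of $e_0$. Combined with the finite structure of the edge link of $e_0$ in $\tau$ (the swing of tetrahedra about $e_0$ described in \Cref{veering defs}), this pinning controls how far above $T_0$ any reachable section can rise and bounds the number of tetrahedra that can be adjoined before no flip in $T(K)$ remains. Equivalently, one can exhibit $T^+(K)$ directly as the pairwise join $T_1 \vee T_2$ of sections in $T(K)$, defined by taking, in each pocket of $T_1 \cup T_2$, the upper bounding subsurface; since $T_1 \cap T_2 \supset K$, the join lies in $T(K)$ and dominates both $T_i$, and the pinning at $K$ forces iterated joins to stabilize.

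For reachability, take any $T \in T(K)$ and consider the pockets of $T \cup T^+(K)$. I claim $T$ lies weakly below $T^+(K)$ in every such pocket: if not, by \Cref{lem:slope_drop} there is a pocket $U$ with base $W$ in which $T$ lies above $T^+(K)$, and then an edge $\sigma$ of $T$ in $W$ satisfies $\sigma > e$ for some edge $e$ of $T^+(K)$ in $W$. The upward analogue of \Cref{lem:down_flip} (applied to $T^+(K)$ with respect to $\sigma$) produces an upward-flippable edge of $T^+(K)$ crossing $\sigma$; since this edge is in $W$ and therefore not in $K$, the flip stays inside $T(K)$, contradicting the maximality of $T^+(K)$. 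Hence $T \preceq T^+(K)$, and the pocket-reduction argument of \Cref{prop:connect} carried out only with upward flips produces the required finite sequence $T \to \cdots \to T^+(K)$. The principal obstacle is the termination step in the construction of $T^+(K)$: without the hypothesis $K \neq \emptyset$, \Cref{gue-sweep} shows the upward flipping can be iterated indefinitely with no top section, so the pinning provided by any edge of $K$ is what makes the proof work.
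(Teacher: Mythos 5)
Your proof is correct and takes essentially the same route as the paper: finiteness/termination of upward flipping within $T(K)$ coming from the pinning at the nonempty $K$ together with local finiteness of $\tau$ at edges, existence of a maximal (non-upward-flippable) section, and the join/flip-lemma argument (via \Cref{lem:slope_drop} and the upward case of \Cref{lem:down_flip}) showing every section of $T(K)$ flips up to it, which is exactly how the paper's $T_3$ (your join $T_1 \vee T_2$) is used. The one soft spot, your termination step, is at the same level of detail as the paper's own finiteness claim for $T(K)$ --- to make it precise one propagates local finiteness from $K$ across all edges of a section rather than using only the link of a single edge $e_0$ --- so there is no genuine gap.
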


\begin{proof}
First note that $T(K)$ is finite: because $\tau$ is locally finite at the edges, there are only finitely many choices for a triangle adjacent to $K$. We then enlarge $K$ successively, noting that there is a bound on the number of triangles in a section.
Thus there exists a section $T^+$ in $T(K)$ which is not upward
flippable in $T(K)$.
For any two sections $T_1,T_2\in T(K)$ there is a $T_3\in
T(K)$ obtained as the union of the tops of the pockets of $T_1$ and
$T_2$ and their intersection. Thus $T_1$ is upward flippable unless
$T_1=T_3$, and similarly for $T_2$. This implies that $T^+$ is the
unique section in $T(K)$ which is not upward flippable, and every
other section is upward flippable to $T^+$. 
We define $T^-$ analogously.
\end{proof}

The section $T^+(K)$ is called the \emph{top of $T(K)$} and the
section $T^-(K)$ is called the \emph{bottom of $T(K)$}. Note that any
section obtained from $T^+(K)$ by upward diagonal exchanges is not in
$T(K)$.

\subsection*{Extension lemma}
We conclude this section with two proofs of \Cref{lem:extension}.

\begin{proof}[Proof one]
  \Cref{gue-sweep} gives us, in particular, the existence of at
  least one section $T_0$ which is disjoint from $E$, which we may
  assume lies above every edge of $E$.  

Then by \Cref{lem:down_flip} there is a downward
  flippable edge $e$ in $T_0$. The tetrahedron involved in the move lies
  above $E$, so $E$ still lies below (or is contained in) the new
  section $T_1$. We repeat this process, and at each stage every edge
  of $E$ is either contained in $T_i$ or crosses an edge of $T_i$ and
  lies below it. Thus by \Cref{lem:down_flip}, unless $E\subset T_i$ each $T_i$
  contains a downward flippable edge that is not contained in
  $E$. 

  Because $\tau$ is locally finite at each
  edge, {\em any} sequence of downward flips is a proper sweepout of
  the region below $T_0$, and hence
  must eventually meet every edge of $\tau$ below $T_0$. Thus we may
  continue until every edge of $E$ lies in $T_i$. 
\end{proof}

\begin{proof}[Proof two] Our second proof does not use \Cref{gue-sweep}, and in
  particular it gives an independent proof of the existence of
  sections.

Let $D$ be a component of the complement of $E$ which is not a triangle.
Let $e$ be an edge of $\boundary D$ and consider the collection of
$\tau$-tetrahedra adjacent to $e$. These contain a sequence
$Q_-,Q_1,\ldots Q_m,Q_+$, as in \Cref{edge-swing}, where $Q_-$ is the
tetrahedron with $e$ as its top 
edge, $Q_+$ is the tetrahedron with $e$ as its bottom edge, and the
rest are adjacent to $e$ 
on the same side as $D$ (if $D$ meets $e$ on two sides we just choose one). 
Two successive tetrahedra in this sequence
share a triangular face. We claim that one of
these faces must be contained in $D$. Equivalently we claim that one
of the triangles is not crossed by any edge of $E$.

Since each tetrahedron $Q$ is inscribed in a singularity free rectangle $R$, if an edge
$f$ of $E$ crosses any edge of $Q$ its rectangle crosses
all of $R$. It follows immediately, since the edges of $E$
have disjoint interiors, that they consistently cross $R$ all vertically, or
all horizontally. Because successive tetrahedra in the sequence share
a face it follows inductively that, if all the faces are crossed
by $E$, then they are all consistently crossed horizontally,
or all vertically.

However, $Q_-$ can 
only be crossed vertically by $E$ (since $E$ does
not cross $e$). Similarly $Q_+$  can only be crossed horizontally. It
follows that there must be a triangular face $F$ that is {\em not} crossed
by $E$. Thus $F$ is contained in $D$. Since $D$ is not a triangle,
at least one edge of $F$ passes through the
interior of $D$. We add this edge to $E$ and
proceed inductively. 
\end{proof}

\section{Rectangle and triangle hulls}
\label{hulls}

In this section we discuss a number of constructions that associate a 
configuration of $\tau$-edges to
a saddle connection of the quadratic differential $q$. These
will be used later to show that subsurfaces with large projection are 
compatible with the veering triangulation in the appropriate sense.
As a byproduct of our investigation, we prove the (to us) unexpected result
(\Cref{th:total geodesic})
that the edges of the veering triangulation form a totally 
geodesic subgraph of the curve and arc graph of $X$.

We emphasize that in \Cref{sec:rectangles_along_saddles} and 
\Cref{sec:t_hulls}, the surface $X$ is not necessarily
fully-punctured. Thus by $\tau$ we mean the veering triangulation
associated to the fully-punctured surface $X \ssm \sing(q)$. We will
say that a saddle connection of $X$ is a {\em $\tau$-edge} if its interior
is an edge of this veering triangulation. In particular this means
that its lift to $\til X$ spans a singularity-free rectangle.

\subsection{Maximal rectangles along a saddle connection}
\label{sec:rectangles_along_saddles}

Let $\sigma$ be a saddle connection, for the moment in the completed universal
cover $\hat X$. Consider the set $\cR(\sigma)$ of all rectangles which are {\em maximal 
with respect to the property that $\sigma$ passes through a diagonal}. 
Thus each $R\in\cR(\sigma)$ contains singularities in at least two
edges. Let $h(R)$ be the convex hull in $R$ of the singularities
in the boundary of $R$ and let $h^{(1)}(R)$ denote its $1$-skeleton
(see \Cref{rect-hulls}).

\realfig{rect-hulls}{The eight possible (up to symmetry) convex hulls $h(R)$, assuming
  at most one singularity per leaf of $\lambda^\pm$. The saddle
  connection $\sigma$ is in blue.}

Let
$$
\rhull(\sigma) = \bigcup \{  h^{(1)}(R): R\in \cR(\sigma) \}.
$$
See \Cref{r-example} for an example.
Note that all the saddle connections in $\rhull(\sigma)$ are edges of $\tau$ --- 
each of these arcs spans a singularity-free rectangle by construction. 
Moreover,  $\rhull(\sigma) = \{\sigma\}$ if $\sigma$ is itself a $\tau$-edge.  

\realfig{r-example}{Example of $\rhull(\sigma)$ (in red)}

The following lemma will play an important role throughout 
this paper.

\begin{lemma}\label{r disjoint}
  If saddle connections $\sigma_1$ and $\sigma_2$ have no transversal intersections then
  neither do $\rhull(\sigma_1)$ and $\rhull(\sigma_2)$. 
\end{lemma}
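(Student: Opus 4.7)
I would argue by contradiction. Suppose edges $e_1\in h^{(1)}(R_1)$ and $e_2\in h^{(1)}(R_2)$ cross transversally at a point $p$, with $R_i\in\cR(\sigma_i)$; the aim is to deduce that $\sigma_1$ and $\sigma_2$ must then also cross transversally. Since $q$ admits no horizontal or vertical saddle connections, each $e_i$ has its interior strictly in $\mathrm{int}(R_i)$, so $p\in\mathrm{int}(R_1)\cap\mathrm{int}(R_2)$.

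The central structural observation is that each $e_i$ is a side of the convex polygon $h(R_i)$ whose vertices are the singularities on $\partial R_i$, while $\sigma_i$ is a chord of the same polygon joining two opposite corners of $R_i$. In particular, inside $h(R_i)$ the side $e_i$ and the chord $\sigma_i$ cannot cross transversally: they can share a vertex but not meet in the interior. My plan is to first upgrade the transversal crossing of $e_1$ and $e_2$ to a transversal crossing of $\sigma_1$ and $e_2$, by analyzing the chord $\gamma:=e_2\cap R_1$ relative to $\sigma_1$ inside the convex polygon $h(R_1)$; then, applying the symmetric argument inside $R_2$, to upgrade further to a transversal crossing of $\sigma_1$ and $\sigma_2$, which is the desired contradiction.

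To carry out the first upgrade, let $A,C$ be the corner endpoints of $\sigma_1$ and let $u_1,v_1$ be the endpoints of $e_1$. Since $e_1$ and $\sigma_1$ do not cross transversally, $\{u_1,v_1\}$ lies in a single arc of $\partial R_1\setminus\{A,C\}$; meanwhile, the two endpoints of $\gamma$ on $\partial R_1$ are separated by $e_1$. The task is to exclude the ``bad'' configuration in which these endpoints of $\gamma$ nonetheless land on the same side of $\sigma_1$ in $R_1$. This is the principal obstacle: it requires using both the convex-hull structure of $h(R_2)$ (of which $e_2$ is a side) and the hypothesis that $\sigma_1$ and $\sigma_2$ do not cross transversally. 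I expect the cleanest route to be a direct case analysis over the eight symmetric configurations of $h(R)$ pictured in \Cref{rect-hulls}, verifying in each case that the crossing of $e_1$ and $e_2$ propagates to a crossing of $\sigma_1$ and $e_2$.
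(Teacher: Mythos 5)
There is a genuine gap, and it sits exactly where you place it yourself: the exclusion of the ``bad configuration'' is deferred to an unexecuted case analysis, and that exclusion is the entire content of the lemma. Worse, the ``central structural observation'' your plan rests on is false. The corners $A,C$ of $R_1$ joined by the diagonal are in general not singularities, hence not vertices of $h(R_1)$; so $\sigma_1$ is a chord of the \emph{rectangle}, not of the hull polygon, and a side of $h(R_1)$ can perfectly well cross $\sigma_1$ transversally. For instance, when the two singularities blocking the maximality of $R_1$ lie on the top and bottom edges, $h^{(1)}(R_1)$ is a single segment joining a point above the diagonal to a point below it, and it crosses $\sigma_1$ (this is the situation in the configurations of \Cref{rect-hulls} with singularities on opposite edges of the rectangle). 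Hence the constraint ``$\{u_1,v_1\}$ lies in a single arc of $\partial R_1\setminus\{A,C\}$'' is unavailable. The second, ``symmetric'' upgrade is also not symmetric and its natural statement is false: $\sigma_1$ is not a hull edge and need not span a singularity-free rectangle, and a saddle connection can cross a side of $h(R_2)$ without crossing $\sigma_2$ (take $R_2=[0,10]^2$ with its diagonal on $\sigma_2$, singularities $T=(5,10)$ and $L=(0,5)$, $e_2=TL$, and a saddle connection meeting $R_2$ in the segment from $(0,9)$ to $(9,10)$: it crosses $e_2$ but stays above the diagonal). So the reduction cannot be run one step at a time as proposed.

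The missing idea is to use not the convexity of $h(R_i)$ but the fact that each edge $e_i$ of $\rhull(\sigma_i)$ spans a singularity-free rectangle $Q_i\subset R_i$ whose opposite corners carry the singular endpoints of $e_i$. If $e_1$ and $e_2$ cross, then $Q_1$ and $Q_2$ cross as in \Cref{edges-cross}, say $Q_1$ taller and $Q_2$ wider; since $R_1$ and $R_2$ have singularity-free interiors, the corner singularities of $Q_1,Q_2$ force $R_2$ to be no taller than $Q_1$ and $R_1$ no wider than $Q_2$, so $R_1$ and $R_2$ meet crosswise and their diagonals --- hence $\sigma_1$ and $\sigma_2$ --- must intersect; a short separate argument (a shared corner of $R_1$ and $R_2$ cannot be a singularity) rules out the degenerate case where the only intersection is a common corner. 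A case analysis over the eight pictures of $h(R_1)$ cannot substitute for this, because the position of $e_2$ relative to $R_1$ is not controlled by the configuration of $h(R_1)$ alone; it is controlled by the rectangle $Q_2$ and the maximal rectangle $R_2$ around it, which is exactly the input your plan never uses.
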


\begin{proof}  
Say that two rectangles meet {\em crosswise} if their interiors
intersect, and no corners of one are in the interior of the
other. Note that when two distinct rectangles meet crosswise,
any two of their diagonals intersect. We say that the rectangles meet
{\em properly crosswise} if they also do not share any corners, in
which case any two diagonals intersect in the interior.

Let $\tau_1$ and $\tau_2$ be saddle connections in $\rhull(\sigma_1)$
and $\rhull(\sigma_2)$, respectively, and suppose that they intersect
transversely. Hence their spanning rectangles $Q_1$ and $Q_2$ must
cross as in \Cref{edges-cross}. Assume that $Q_1$ is the taller and
$Q_2$ the wider.

Now let $R_1$ and $R_2$ be the rectangles of $\cR(\sigma_1)$ and
$\cR(\sigma_2)$ containing $Q_1$ and $Q_2$, respectively. 
Because of the singularities in the corners of $Q_1$ and $Q_2$, $R_2$
is no taller than $Q_1$ and $R_1$ is no wider than $Q_2$. Hence $R_1$
and $R_2$ meet crosswise. (See \Cref{crossing-rects}).

\realfig{crossing-rects}{Three examples of the crossing pattern. The rectangles $R_1$
  and $R_2$ are in blue, $\tau_1$ and $\tau_2$ are in red, and  $Q_1$ and $Q_2$ are shaded. In {\em (i)} and
  {\em (ii)} the
  crossing is proper. In {\em (iii)} the corner $c$ is shared.}

If they met properly crosswise then $\sigma_1$ and $\sigma_2$ would
have an interior intersection, which is a contradiction. Hence $R_1$
and $R_2$ share a corner $c$. But the edges meeting at $c$ would
have to pass through boundary edges of $Q_1$ and $Q_2$. Those edges already
have the singularities of $\tau_1$ and $\tau_2$, and so $c$ cannot be
a singularity. Thus if $c$ is the intersection of the diagonals
contained in $\sigma_1$ and $\sigma_2$ it would be in the interior of
both saddle connections, again a contradiction.

We conclude that $\tau_1$ and $\tau_2$ cannot cross.
\end{proof}

An immediate consequence of \Cref{r disjoint} is that we can carry out the
construction downstairs: If $\sigma$ is a saddle connection in $\bar X$ we
can construct $\rhull(\hat \sigma)$ for each of its lifts $\hat \sigma$ 
to $\hat X$, and the lemma tells us none of them intersect
transversally. Thus the construction projects downstairs to give a
collection of $\tau$-edges with disjoint interior.
Moreover if $K$ is {\em any} collection
of saddle connections with disjoint interiors then $\rhull(K)$ makes sense as a subcomplex of 
$\tau$ supported on some section by \Cref{lem:extension}. 
Hence, we will continue to use $\rhull(\cdot)$ to denote 
the corresponding map on saddle connections of $\bar X$. We remark that although
$\rhull(\cdot)$ takes collections of saddle connections with disjoint interiors to collections of
$\tau$-edges with disjoint interiors, it may do so with multiplicity. 

\subsection{Triangle hulls} \label{sec:t_hulls}

Now let us consider a similar operation that uses right triangles
instead of rectangles, and associates to a transversely oriented
saddle connection in the universal cover a homotopic path of saddle
connections.

If $\sigma$ is a saddle connection in $\hat X$ equipped with a
transverse orientation, let $\cT(\sigma)$ denote the collection of
Euclidean right triangles which are {\em maximal with respect to the
property that they are attached along the hypotenuse to $\sigma$ along
the side given by its transverse orientation}. A triangle $t$ in
$\cT(\sigma)$ must have exactly one singularity in each of its legs,
and so their convex hull $h(t)$ is a single saddle connection.
The set $\cT(\sigma)$ must be finite, and its hypotenuses cover
 $\sigma$ in a sequence
of non-nested intervals, 
ordered by their left (or right) endpoints. See \Cref{t-example}. 
Let $\thull(\sigma)$ be the union of segments $h(t)$ for $t\in\cT(\sigma)$.

\realfig{t-example}{An example of $\thull(\sigma)$ and $P(\sigma)$.}

\begin{lemma}\label{thull structure}
Either $\thull(\sigma) = \sigma$ or
$\sigma\union\thull(\sigma)$ is the boundary of an embedded Euclidean
  polygon $P(\sigma)$ in $\hat X$ which is foliated by arcs of $\lambda^\pm$.
\end{lemma}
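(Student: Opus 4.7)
My plan is to analyze how the triangles $\cT(\sigma) = \{t_1, \ldots, t_n\}$ (indexed by hypotenuse position along $\sigma$) link together at shared singularities, so that the saddle connections $h(t_i)$ chain into a piecewise-linear path from one endpoint of $\sigma$ to the other which, together with $\sigma$, bounds $P(\sigma)$. Working in local Euclidean coordinates around $\sigma$ in which $\lambda^\pm$ are the horizontal and vertical foliations, for each $t_i$ let $A_i, B_i$ be the endpoints of its hypotenuse along $\sigma$ (with $A_i$ preceding $B_i$) and let $s^v_i$, $s^h_i$ be the unique singularities on its vertical and horizontal legs, so that $h(t_i) = [s^v_i, s^h_i]$.

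The key claim is $s^h_i = s^v_{i+1}$ for $1 \le i < n$. Maximality of $t_i$ prevents its hypotenuse from being extended past $B_i$, because such an extension would push $s^h_i$ into the interior of the enlarged triangle, violating the Euclidean condition. A would-be triangle whose hypotenuse starts strictly between $A_i$ and the intersection of the vertical leaf through $s^h_i$ with $\sigma$ is obstructed by $s^h_i$ in the same way, forcing it to be nested inside $t_i$, which is excluded by the non-nesting of $\cT(\sigma)$. Hence $A_{i+1}$ is exactly this intersection point, placing $s^h_i$ in the interior of the vertical leg of $t_{i+1}$; by uniqueness of the leg singularity (which uses the hypothesis that $q$ has no horizontal or vertical saddle connections), $s^v_{i+1} = s^h_i$. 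Applying the same argument at the boundary, since the hypotenuses cover $\sigma$, $A_1$ equals the initial endpoint $A$ of $\sigma$; because $A$ is a corner singularity of $t_1$ on its vertical leg, the ``exactly one'' property forces $s^v_1 = A$. Symmetrically $s^h_n = B$, the terminal endpoint of $\sigma$.

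These relations give $\thull(\sigma) = h(t_1) \cup \cdots \cup h(t_n)$ as a piecewise-linear path from $A$ to $B$ on the chosen side of $\sigma$. The degenerate case $\thull(\sigma) = \sigma$ arises exactly when $n = 1$ with $s^v_1 = A$ and $s^h_1 = B$. Otherwise, each $h(t_i)$ lies in its triangle $t_i$ on the chosen side of $\sigma$, meeting $\sigma$ only at $A$ (if $i=1$) or $B$ (if $i=n$). Since both coordinates strictly increase along each $h(t_i)$ in our local coords, the full chain is monotone and therefore embedded; thus $\sigma \cup \thull(\sigma)$ is a Jordan curve bounding an embedded disk $P(\sigma) \subset \hat X$. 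A short monotonicity argument shows $P(\sigma) \subset \bigcup_i t_i$, and all singularities in this union (namely $A$, $B$, and the leg singularities $s^v_i, s^h_i$) are vertices of the boundary $\sigma \cup \thull(\sigma)$; so the interior of $P(\sigma)$ contains no singularities and is Euclidean. Finally, since $\sigma$ and every $h(t_i)$ have nonzero slope (again by the no-horizontal/vertical-saddle-connection hypothesis), each horizontal or vertical leaf through an interior point of $P(\sigma)$ meets the boundary transversally, yielding the desired foliation by arcs of $\lambda^\pm$.

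The hardest step is the chaining identity $s^h_i = s^v_{i+1}$: one must verify that $A_{i+1}$ lies at exactly the intersection of $\sigma$ with the vertical leaf through $s^h_i$, ruling out alternative configurations consistent with maximality and coverage of $\sigma$. A secondary subtlety is the uniform treatment of the boundary triangles and the degenerate case, which hinges on interpreting ``the unique leg singularity'' as possibly a corner singularity when that corner is an endpoint of $\sigma$.
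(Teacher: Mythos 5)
Your combinatorial skeleton (consecutive triangles of $\cT(\sigma)$ chained by a shared leg singularity, with the extreme triangles anchored at the endpoints of $\sigma$) agrees with the paper's, but there is a genuine gap at the embeddedness step, which is precisely where the paper spends most of its effort. You work ``in local Euclidean coordinates around $\sigma$'' in which all the triangles $t_i$, their leg singularities, and the chain $h(t_1)\cup\cdots\cup h(t_n)$ carry well-defined monotone $(x,y)$-coordinates, and you deduce from monotonicity that the chain is embedded and that $\sigma\cup\thull(\sigma)$ is a Jordan curve bounding an embedded flat disk. No such chart exists a priori: $\hat X$ is a singular flat surface, the cone points sit exactly at the vertices of your chain, and what your coordinates really are is the developed image of the union of the triangles. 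Monotonicity of developed coordinates does not rule out two non-consecutive triangles $t_i,t_j$ overlapping inside $\hat X$, nor the chain returning to meet $\sigma$ or itself, because the developing map of the union is not known to be injective --- that injectivity is essentially the statement being proved. This cannot be repaired by purely local flat geometry: the identical local picture exists in the quotient $\bar X$, where triangle hulls genuinely fail to embed (as the paper observes just before \Cref{disjoint_thulls}), so any correct proof must use a global property of $\hat X$. The paper's missing-from-your-argument ingredient is exactly this: uniqueness of geodesics in the CAT(0) space $\hat X$ shows that if a point lies in two triangles of $\cT(\sigma)$, the two vertical segments from it to the respective hypotenuses would yield distinct geodesics with both endpoints on $\sigma$; hence any two triangles meet only in a common right sub-triangle over a common subarc of $\sigma$. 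From this the embeddedness of $\thull(\sigma)$ follows, contractibility of $\hat X$ gives the disk, and the vertical/horizontal strips between $\sigma$ and the edges of $\thull(\sigma)$ give the foliation. Your transversality remark for the foliation is fine, but only after the embedded flat polygon has been established.

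A secondary gap, more forgivable since the paper is equally terse here: your chaining step only shows that no maximal triangle has hypotenuse starting strictly between $A_i$ and the foot $p_i$ of the vertical leaf through $s^h_i$, i.e.\ $A_{i+1}\ge p_i$. The asserted equality $A_{i+1}=p_i$ (which is what places $s^h_i$ on the vertical leg of $t_{i+1}$) additionally requires producing a maximal triangle whose hypotenuse starts exactly at $p_i$ --- for instance by growing the sub-triangle of $t_i$ with corner at $s^h_i$ past $B_i$ and checking that its left end is then pinned at $p_i$ by $s^h_i$; without this, $A_{i+1}$ could lie strictly beyond $p_i$ and the identification $s^v_{i+1}=s^h_i$ would not follow.
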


\begin{proof}
  Suppose that $t$ and $t'$ are triangles of $\cT(\sigma)$ and $p\in
  t\intersect t'$ is in the interior of $t$. Let $l$ and $l'$ be the vertical
  line segments in $t$ and $t'$, respectively, joining $p$ to the
  respective hypotenuses ($l'$ could be a single point). If $l$ and
  $l'$ leave $p$ in opposite directions then $l\union l'$ is a
  vertical geodesic connecting two points of $\sigma$, which
  contradicts the uniqueness of geodesics in $\hat X$. If they leave
  $p$ in the same direction but are not equal, then their difference is a vertical
  geodesic with endpoints on $\sigma$, again a contradiction.

We conclude that if  $t$ and $t'$ intersect they do so on a common
subarc of their hypotenuses. This subarc spans a (nonmaximal) right triangle which
is exactly $t\intersect t'$.

Now given $t\in\cT(\sigma)$, the vertical and
horizontal legs of $t$ each contain a single singularity of $\hat X$;
denote these singularities by $v_t$ and $h_t$, respectively. By
construction of $\cT(\sigma)$, there is a unique triangle $t' \in \cT(\sigma)$ such
that $h_{t'} = v_{t}$, unless $v_{t}$ is an endpoint of
$\sigma$. Hence, given an orientation on $\sigma$, the edges of
$\thull(\sigma)$ come with a natural ordering induced by moving along
$\sigma$. By our observations above, we see that $\thull(\sigma)$ is
an embedded arc and meets $\sigma$ only at its endpoints.
Since $\hat X$ is contractible, $\sigma$ and $\thull(\sigma)$ must be
homotopic and hence cobound a disk $P(\sigma)$. In fact this disk is foliated by
both $\lambda^+$ and $\lambda^-$, as we can see by noting that each
edge of $\thull(\sigma)$ cobounds a vertical (similarly a horizontal)
strip with a segment in $\sigma$. Hence $P(\sigma)$ admits an isometry
to a polygon in $\R^2$.
\end{proof}

Let us define a map $\thull^+_\sigma:\sigma\to \thull(\sigma)$ (resp. $\thull^-_\sigma$) which
is the result of pushing the points of $\sigma$ along the
vertical (resp. horizontal) foliation to the other side of $P(\sigma)$. 

If $f\colon I \to \hat X$ is an embedding of an oriented 1-manifold $I$ that parametrizes
some union of saddle connections, we let
\begin{equation}\label{thull f}
  \thull^+ f\colon I \to \hat X
\end{equation}
be the map that sends each $p\in I$ to $\thull^+_\sigma(f(p))$, where
$\sigma$ is the saddle connection containing $f(p)$ with transverse orientation induced by the orientation on $I$. By composing with
covering maps we can use the same notation for the resulting operation
in quotients $\hat X_Y$ or $\bar X$.

Unlike the rectangle hulls, the edges of $\thull(\sigma)$ are not
necessarily $\tau$-edges. (See the upper-right red saddle connection in \Cref{t-example}.)
Moreover, the $\thull$-version of \Cref{r disjoint} is in general not true.
That is, the image of $\thull$ may not project to an embedded complex in $\bar X$ since $\sigma_1$ and $\sigma_2$ can be disjoint while $\thull(\sigma_1)$ and $\thull(\sigma_2)$ cross. However, we do have the following:


\begin{lemma}\label{disjoint_thulls}
Let $\sigma,\sigma'$ be saddle connections in $\hat X$ with disjoint interiors. Let $l$ be an
arc of $\lambda^+$ with endpoints on $\sigma$ and $\sigma'$, and give
$\sigma$ and $\sigma'$ the transverse orientation pointing toward the
interior of $l$. Then the polygons $P(\sigma)$ and $P(\sigma')$ of $\hat X$ (from \Cref{thull structure}) have disjoint interiors.
\end{lemma}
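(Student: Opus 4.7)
I argue by contradiction: suppose there is a point $p \in \mathrm{int}(P(\sigma)) \cap \mathrm{int}(P(\sigma'))$. By \Cref{thull structure}, each polygon $P(\sigma)$ is foliated by both vertical and horizontal leaves running from the hypotenuse $\sigma$ to $\thull(\sigma)$; equivalently, each interior point of $P(\sigma)$ is the right-angle vertex of a singularity-free right triangle with hypotenuse on $\sigma$ and legs given by the $\lambda^\pm$-leaf segments from the point to $\sigma$. Applying this at $p$ for both polygons yields singularity-free right triangles $T_\sigma \subset P(\sigma)$ and $T_{\sigma'} \subset P(\sigma')$ sharing the right-angle vertex $p$, with hypotenuses on $\sigma$ and $\sigma'$ respectively.

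After choosing coordinates so that $l$ is vertical pointing upward, the transverse-orientation hypothesis places $P(\sigma)$ above $\sigma$ and $P(\sigma')$ below $\sigma'$. The vertical legs of $T_\sigma$ and $T_{\sigma'}$ from $p$ then point in opposite directions (down to some $a \in \sigma$, up to some $a' \in \sigma'$) and combine into a second vertical $\lambda^+$-arc $l'$ from $\sigma$ through $p$ to $\sigma'$, with singularity-free interior, satisfying the same orientation hypotheses as $l$ but at the $x$-coordinate of $p$. Let $t_0 \in \cT(\sigma)$ be the triangle whose hypotenuse contains $p_0$ and $t_0' \in \cT(\sigma')$ the triangle whose hypotenuse contains $p_1$; the corresponding chords $h(t_0) \in \thull(\sigma)$ and $h(t_0') \in \thull(\sigma')$ are saddle connections that cross $l$ transversally, with endpoints among the leg singularities $v_{t_0}, h_{t_0}, v_{t_0'}, h_{t_0'}$.

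The contradiction is produced by combining: (i) $p \in P(\sigma)$ implies $p$ sits strictly below the relevant crossing of $\thull(\sigma)$ along $l'$, in particular below the chord through which $l'$ first exits $P(\sigma)$; (ii) symmetrically, $p \in P(\sigma')$ implies $p$ sits strictly above the analogous crossing of $\thull(\sigma')$ along $l'$; (iii) $\sigma$ and $\sigma'$ have disjoint interiors; and (iv) the interior of $l$ is singularity-free. Tracking the positions of $v_{t_0}, h_{t_0}, v_{t_0'}, h_{t_0'}$ relative to $l$, $l'$, and these crossings then forces one of these singularities to lie in the interior of either $T_\sigma$ or $T_{\sigma'}$, contradicting the singularity-free interiors of those triangles.

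The main obstacle is the final case analysis, which splits into four sub-cases according to the signs of the slopes of $\sigma$ and $\sigma'$; these signs determine the directions of the horizontal legs of $T_\sigma, T_{\sigma'}$ and hence where the forced singularity ultimately lies. A further subtlety is that the vertical line through $p$ need not coincide with that through $p_0, p_1$, so the argument must be transferred between $l$ and $l'$; one must verify that $l$ and $l'$ lie in a common Euclidean strip bounded by subsegments of $\sigma$ and $\sigma'$ inside which the singularity placements can be controlled. Handling these details carefully, while using the fact that horizontal and vertical saddle connections are excluded by our standing hypotheses on $q$, is the technically demanding part of the argument.
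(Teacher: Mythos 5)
Your overall strategy (take the vertical leaf through a putative common interior point $p$ and derive a contradiction from the singularity-free triangles of the two hull constructions) is in the same spirit as the paper's argument, but as written the proposal has two genuine gaps, one at the beginning and one at the end.

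First, the step where you ``choose coordinates so that $l$ is vertical pointing upward'' and conclude that the vertical legs at $p$ point in opposite directions (down to $\sigma$, up to $\sigma'$) is not justified. The surface $\hat X$ is a singular flat surface, not a subset of $\RR^2$: the region swept out between $\sigma$, $\sigma'$, $l$ and the leaf through $p$ can contain cone points of angle at least $3\pi$ (recall that in this section $X$ need not be fully-punctured, so singularities of $q$ may be honest interior cone points of $\til X$, around which the vertical direction field need not even be orientable). So ``above $\sigma$'' and ``below $\sigma'$'' have no global meaning, and it is a priori possible that the two polygons approach $p$ from the same vertical side. This coherence of directions is precisely the point the paper has to argue, and it does so by applying Gauss--Bonnet to the circuit formed by the two vertical arcs together with $\sigma$, $l$, and $\sigma'$; your proposal replaces that argument with an appeal to coordinates that do not exist.

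Second, the actual contradiction is never derived. You assert that tracking the leg singularities ``forces one of these singularities to lie in the interior of either $T_\sigma$ or $T_{\sigma'}$,'' but you then acknowledge that this requires a four-way case analysis on slope signs, a transfer between $l$ and the leaf $l'$ through $p$ (your $p_0,p_1$ are never defined), and a verification that $l$ and $l'$ lie in a common Euclidean strip --- none of which is carried out, and the strip claim runs into the same global-geometry problem as above. For comparison, the paper avoids all of this: once the orientations are shown to agree, the two vertical arcs through $p$ concatenate into a single leaf segment $J$ from $\sigma$ to $\sigma'$, and the points $y, x', p, x, y'$ (where $x = J \intersect \thull(\sigma)$, $x' = J \intersect \thull(\sigma')$) must occur in that order; on the other hand, since $p$ lies in both triangles $t \in \cT(\sigma)$ and $t' \in \cT(\sigma')$ and the chord endpoints cannot enter the other (singularity-free) triangle, the crossing point $x$ must lie below $x'$ along $J$, an immediate ordering contradiction with no case analysis on slopes. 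I would encourage you to either supply the Gauss--Bonnet step and the full case analysis, or to reorganize the endgame along the lines of the ordering argument, which is both shorter and insensitive to the slope configurations you were worried about.
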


\begin{proof}
Suppose towards a contradiction that there is a point $p$ which is in
the interior of each of the polygons $P = P(\sigma)$ and $P'  =
P(\sigma')$.
Since $P$ and $P'$ are foliated by $\lambda^+$, let $m$ and $m'$ be
the arcs of $\lambda^+$ which are properly embedded in $P$ and $P'$
respectively, and pass through $p$. Orient $m$ so that it begins in
$\sigma$, and $m'$ so that it terminates in $\sigma'$. These
orientations agree at $p$: if they did not we would obtain a
contradiction by applying Gauss--Bonnet to the circuit passing through
$m$,$\sigma$,$l$,$\sigma'$ and $m'$. 

Thus, the union $J=m\union m'$ is an interval in a leaf of $\lambda^+$ with endpoints on
$\sigma$ and $\sigma'$, with $p$ in the interior of $m\intersect m'$. (If $p$ were in $l$ already then
we would have $J=l$.)  
Orienting $J$ as $[y,y']$ where $y\in \sigma$ and $y'\in\sigma'$, we
can write $m=[y,x]$ and $m'=[x',y']$, where $x = J\intersect
\thull(\sigma)$ and $x'= J\intersect \thull(\sigma')$. These points
appear, in order along $J$, as $y,x',p,x,y'$.

\realfig{disjoint-P}{The point $p$ cannot lie in the interior of both
  $P(\sigma)$ and $P(\sigma')$.}

Let $t$ and $t'$ be the triangles of $\cT(\sigma)$ and $\cT(\sigma')$
containing $x$ and $x'$, respectively. Then $p\in t\intersect t'$.
Let $\kappa$ and $\kappa'$ be the
saddle connections of $\thull(\sigma)$ and
$\thull(\sigma')$ spanning $t$ and $t'$, respectively
(See \Cref{disjoint-P}).
The fact that the endpoints of $\kappa$ and $\kappa'$ are disjoint
from the intersection of $t$ and $t'$ implies that $\kappa\intersect
J$, which is $x$, lies below $\kappa'\intersect J$, which is
$x'$. This contradicts the ordering of the points in $J$. 
\qedhere

%
\end{proof}

\subsection{Retractions in $\A$} \label{sec:totally_geo}
In this subsection, $X$ is fully-punctured.
Let $\A(\tau) \subset \A(X)$ be the span of the vertices of $\A(X)$
which are represented by edges of $\tau$. We will
construct a \emph{coarse 1-Lipschitz retraction} from $\A(X)$ to $\A(\tau)$.
By this, we mean a coarse map which takes diameter $\le 1$ sets to diameter $\le 1$ 
sets and restricts to the identity on the 0-skeleton of $\A(\tau) \subset \A(X)$.

First, let $\mathcal{SC}(q) \subset \A(X)$ be the arcs of $X$ which
can be realized by saddle connections of $q$. Hence,
$\A(\tau) \subset \mathcal{SC}(q) \subset \A(X)$.
For any $a \in \A(X)$
define $\s(a)\subset \mathcal{SC}(q)$ as follows: If $a_q$ is the
$q$-geodesic representative of $a$ in $\bar X$, then let $\s(a)$
be the set of  saddle connections of $q$ composing
$a_q$. If $a$ is a cylinder curve of $q$, then we take $\s(a)$ to be
the set of saddle connections appearing in the boundary of the maximal
cylinder of $a$. Note that if $a \in \A(X)$ is itself represented by a saddle
connection of $q$, then $\s(a)=\{a\}$. 

The following lemma shows that $\s$ is well-defined and is a
coarse $1$-Lipschitz retraction, in the above sense.


\begin{lemma} \label{saddle_proj}
For adjacent vertices $a,b \in \A(X)$,  the vertices of $\s(a)$ and
$\s(b)$ are pairwise adjacent or equal.
\end{lemma}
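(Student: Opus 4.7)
The plan is to reduce to the following observation: two saddle connections of $q$ with no transverse intersection either coincide or have disjoint interiors, and hence represent equal or adjacent vertices of $\A(X)$. Granting this, the task becomes showing that when $a,b$ are adjacent, every saddle connection of $\s(a)$ fails to cross every saddle connection of $\s(b)$ transversely.

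First I would realize $a$ and $b$ with disjoint interiors in $X$ and pass to their $q$-geodesic representatives $a_q,b_q$ in $\bar X$, which exist and decompose into saddle connections by the discussion in \Cref{AY in flat geometry}. Lifting to $\hat X$ via \Cref{same compactification}, each component of $\til a_q$ and $\til b_q$ is a complete $q$-geodesic line with well-defined endpoints on $\partial\beta(\til X)$, and for a pair of lifts descending from disjoint representatives in $X$ those endpoints interleave trivially. Applying \Cref{cor:side_coherence} to each such pair of lifts, $\til b_q$ lies on a single side of $\til a_q$. Consequently $a_q$ and $b_q$ have no transverse intersections in $\bar X$, so in the case where neither $a$ nor $b$ is a cylinder curve (so $\s(a),\s(b)$ are exactly the saddle-connection decompositions of $a_q,b_q$), each pair $\sigma\in\s(a)$, $\sigma'\in\s(b)$ either coincides or is transverse-intersection-free; since saddle connections are maximal geodesic segments between singularities, this forces equality or disjoint interiors, yielding the conclusion.

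For the remaining cases I would handle cylinder curves directly. If $a$ is a cylinder curve with maximal flat cylinder $C_a$, then by definition $\s(a)$ consists of the saddle connections comprising $\partial C_a$. Since $b$ is disjoint from $a$ as homotopy classes, the $q$-geodesic representative of $b$ (or, if $b$ is a cylinder curve, the entirety of $C_b$) cannot cross $\mathrm{int}(C_a)$ transversely: any such crossing would produce an unremovable intersection with some core curve of $C_a$, contradicting disjointness of $a$ and $b$. Hence each saddle connection of $\s(b)$ either lies along $\partial C_a$ -- and so, by maximality of saddle connections, coincides with an element of $\s(a)$ -- or has interior disjoint from $\partial C_a$. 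The subcase where both $a,b$ are cylinder curves is similar: either $C_a = C_b$ (giving $a=b$) or the cylinders have disjoint interiors and their boundary saddle connections are pairwise equal or have disjoint interiors.

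The main obstacle I anticipate is bookkeeping around flat cylinders and shared endpoints at punctures: I need to ensure that a saddle connection of $\s(b)$ running along $\partial C_a$ genuinely equals an element of $\s(a)$, rather than being a proper sub-segment or an extension of one, and that non-transverse intersections at singular points translate cleanly into adjacency in $\A(X)$. Both points follow from the observation that a saddle connection is a geodesic segment with singularities \emph{only} at its endpoints, combined with the uniqueness portion of \Cref{same compactification}(3) applied outside flat-strip regions.
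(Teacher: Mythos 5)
Your proposal is correct and follows essentially the same route as the paper: both arguments rest on \Cref{cor:side_coherence} to convert disjointness of the hyperbolic representatives of $a$ and $b$ into the absence of transverse crossings between their $q$-geodesic representatives, together with the observation that two saddle connections without transverse crossings are equal or have disjoint interiors. The only deviation is your separate cylinder-curve case, where the asserted ``unremovable intersection with some core curve of $C_a$'' is itself just \Cref{cor:side_coherence} applied with a core (or boundary) geodesic of the maximal cylinder serving as the $q$-geodesic representative of $a$, so that case folds into the main argument exactly as in the paper's proof.
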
 

\begin{proof}
Recall that adjacency of vertices in $\A(X)$ corresponds to disjointness of their
hyperbolic geodesic representative, and for vertices realized by
saddle connections, this corresponds to the lack of transverse
intersection of their interiors.
But if any arcs of $\s(a)$ and $\s(b)$ have crossing interiors,
 \Cref{cor:side_coherence} implies that the hyperbolic geodesic
representatives of $a$ and $b$ must cross as well. The lemma follows.
\end{proof}

Combining this lemma with \Cref{r disjoint} gives us the proof of
\Cref{th:total geodesic}, which we restate here in somewhat more
precise language:

\restate{th:total geodesic}{
  {\rm (Geodesically connected theorem).}
  Let $(X,q)$ be fully punctured with associated veering triangulation $\tau$. 
The composition $\rhull \circ \s \colon \A(X) \to \A(\tau)$ is a coarse
$1$--Lipschitz retraction in the sense that it takes diameter $\le 1$ sets
to diameter $\le 1$ sets, and is the identity on the $0$-skeleton of $\A(\tau)$.
Hence, any two vertices in $\A(\tau)$ are joined by a geodesic of $\A(X)$ that lies
in $\A(\tau)$.}

\begin{proof}
\Cref{saddle_proj} says that $\s:\A(X)\to\mathcal{SC}(q)$ is a coarse $1$-Lipschitz
retraction. \Cref{r disjoint}, interpreted as a statement about the
arc and curve complexes, says the same for
$\rhull:\mathcal{SC}(q)\to\A(\tau)$. The theorem follows. 
\end{proof}


\section{Projections and compatible subsurfaces}
\label{surface_reps}

In this section we show that if $Y\subset X$ is a compact essential subsurface of large projection distance $d_Y(\lambda^+,\lambda^-)$, then $Y$ has
particularly nice representations with respect to, first, the quadratic
differential $q$ and, second, the veering triangulation $\tau$. We
emphasize that in this section, the surface $X$ is not necessarily
fully-punctured. 


\subsection{Projection and $q$--compatibility}
Recall the $q$-convex hull map $\hat\iota_q \colon Y \to \bar X_Y$
constructed in \Cref{q tight}. We
say that $Y$ is {\em $q$-compatible} 
if $\hat\iota_q$ is an embedding of $Y' = Y\ssm \boundary_0 Y$, as in part (6) of \Cref{q tight}.
(Recall that $\boundary_0 Y$ 
 maps to completion points of $\til \PP_Y$). 
 This condition implies
 a little more:
 
 \realfig{Y_q_cartoon_2}{The image of a $q$-compatible subsurface $Y$ in $\bar X_Y$ under $\hat \iota_q$. Open circles are points of $\til \PP_Y$ (corresponding to the image of $\partial_0 Y$) and dots are singularities not contained in $\til \PP_Y$. The ideal boundary of $X_Y$ is in blue.}


\begin{lemma} \label{lem:int_embed}
  If $Y \subset X$ is $q$-compatible, then
  \begin{enumerate}
    \item the projection $\iota_q \colon Y \to \bar X$ of $\hat
      \iota_q$ to $\bar X$ is an embedding from $\int(Y)$ into $X$
      which is homotopic to the inclusion, and
      \item $\hat\iota_q(\boundary'Y)$ does not pass through points of $\til\PP_Y$.
  \end{enumerate}
\end{lemma}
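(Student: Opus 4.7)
I would prove (2) first and use the set-theoretic observation it produces to handle (1).

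For (2), suppose toward a contradiction that $\hat\iota_q(y) = p$ for some $y \in \partial'Y$ and $p \in \til\PP_Y$. By \Cref{q tight}(2), $p = \hat\iota_q(c)$ for the component $c$ of $\partial_0Y$ corresponding to $p$. Pick a point $c_0 \in c$ and a sequence $y_n \in \int(Y)$ with $y_n \to c_0$ in $Y$. Continuity of $\hat\iota_q$ gives $\hat\iota_q(y_n) \to \hat\iota_q(c_0) = p = \hat\iota_q(y)$. Since $y$ and every $y_n$ lie in $Y'$, and $q$-compatibility (via \Cref{q tight}(6)) says $\hat\iota_q|_{Y'}$ is a homeomorphism onto its image, continuity of $\hat\iota_q^{-1}$ forces $y_n \to y$ in $Y$; but $y_n \to c_0 \neq y$, contradicting Hausdorffness of $Y$.

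For (1), the same sequence argument, applied with $y \in \int Y$ in place of $y\in\partial' Y$, shows $\hat\iota_q(\int Y) \cap \til\PP_Y = \emptyset$, hence $\hat\iota_q(\int Y) \subset X_Y$ and $\iota_q(\int Y) \subset X$. The homotopy in $X$ from the inclusion $\int Y \hookrightarrow X$ to $\iota_q|_{\int Y}$ is obtained by applying $p$ to $h_t$ of \Cref{q tight}(1): what we just showed gives $h_t(\int Y) \subset X_Y$ for all $t \in [0,1]$, and $p \circ h_0 = p \circ \hat\iota$ is the inclusion.

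For injectivity of $\iota_q|_{\int Y}$, suppose $\iota_q(y_1) = \iota_q(y_2)$ with $y_1 \neq y_2$ in $\int Y$. Lift to $\til X = \HH^2$: choose lifts $\til z_i$ of $\hat\iota_q(y_i)$ with $\til z_2 = g \cdot \til z_1$ for some $g \in \pi_1(X)$; we have $g \notin \pi_1(Y)$ since $\hat\iota_q(y_1) \neq \hat\iota_q(y_2)$ by injectivity on $Y'$. By the construction in \Cref{q tight}, $\widetilde{\hat\iota_q}$ is a $\pi_1(Y)$-equivariant map $\CH(\Lambda_Y) \to \CH_q(\Lambda_Y)$, and $q$-compatibility makes its restriction to interiors a homeomorphism; so $\til z_2 \in \int \CH_q(\Lambda_Y)$ and simultaneously $\til z_2 = g \til z_1 \in g \cdot \int \CH_q(\Lambda_Y) = \int \CH_q(g \Lambda_Y)$. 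Since $Y$ embeds in $X$, the hyperbolic hulls $\CH(\Lambda_Y)$ and $\CH(g\Lambda_Y)$ have disjoint interiors in $\HH^2$ for $g\notin\pi_1(Y)$. By \Cref{cor:side_coherence}, the $q$-replacements of their boundary geodesics remain on consistent sides of each other, so $\int \CH_q(\Lambda_Y)$ and $\int \CH_q(g\Lambda_Y)$ are also disjoint, a contradiction.

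The main obstacle is the last step: transferring disjointness of hyperbolic hulls to disjointness of $q$-hulls in the injectivity argument. The $q$-geodesic boundaries can meet at parabolic fixed points (e.g.\ if $Y$ and $gYg^{-1}$ share a puncture-parallel boundary component), so one cannot simply appeal to the hyperbolic picture; \Cref{cor:side_coherence} is precisely the tool designed for this situation.
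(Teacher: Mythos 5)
Your overall route is the same as the paper's. For part (1) the paper argues exactly as you do: a failure of injectivity of $\iota_q$ on $\int(Y)$ produces a deck transformation $g\notin\pi_1(Y)$ for which $\int\CH_q(\Lambda_Y)$ and $g\cdot\int\CH_q(\Lambda_Y)$ overlap, and \Cref{cor:side_coherence} is then the tool that transfers this to an overlap of the distinct hyperbolic hulls, contradicting that $Y$ is embedded (the paper states this as the contrapositive of your last step). Your part (2) is the paper's collar-neighborhood argument recast as a sequence/continuity argument using that $\hat\iota_q|_{Y'}$ is a homeomorphism onto its image; that is fine. One cosmetic point: when $Y$ is an annulus the hyperbolic hulls are single geodesics, so ``disjoint interiors'' is vacuous; what you actually need (and what the argument via \Cref{cor:side_coherence} delivers) is that the two distinct axes do not cross, so that no $q$-geodesic with the endpoints of one can pass from one open side of a $q$-geodesic with the endpoints of the other to the other side, even through a point of $\hat\PP$.

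There is one genuine (though localized and fixable) gap: the inference ``$\hat\iota_q(\int Y)\cap\til\PP_Y=\emptyset$, hence $\hat\iota_q(\int Y)\subset X_Y$.'' The completion points of $\bar X_Y$ are not only the finite-angle points $\til\PP_Y$; every puncture of $X$ not encircled by $\partial_0 Y$ contributes infinite-angle completion points to $\bar X_Y$, and your sequence argument says nothing about those, since they are not images of components of $\partial_0Y$ under $\hat\iota_q$ and hence admit no analogous approximating sequence. (Indeed, the image of $\partial'Y$ genuinely can pass through such points; only $\til\PP_Y$ is excluded in part (2).) The correct justification that $\int(Y)$ maps into $X_Y$ is the disk hypothesis itself: $q$-compatibility says $\int\bigl(\CH_q(\Lambda_Y)\bigr)$ is an open disk in $\hat X$, and an open subset of $\hat X$ containing a point of $\hat\PP$ cannot be a surface at that point (its link there is a line, not a circle), so the interior of the hull, whose quotient is $\hat\iota_q(\int Y)$ by the construction in \Cref{q tight}(6), contains no completion points at all. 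With that sentence inserted, your proof of the containment in $X$, the homotopy statement, and the injectivity argument all go through.
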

Recall that $\boundary'Y = \boundary Y \ssm \boundary_0Y$.

\begin{proof}
Recall from \Cref{q tight} that $q$-compatibility of $Y$ is equivalent
to the statement that 
the interior of the $q$-hull $\CH_q(\Lambda) \subset
\hat X$ is a disk (i.e. it is not pinched along singularities or
saddle connections).

If $\iota_q \colon \int(Y) \to X$ fails to be an
embedding, then it must be that for some deck transformation $g$ of
the universal covering $\til X \to X$ the interiors of $\CH_q(\Lambda)$
and $g \cdot \CH_q(\Lambda)$ are distinct and overlap. But then it
follows immediately from \Cref{cor:side_coherence} that the distinct
hyperbolic convex hulls $\CH(\Lambda)$ and $g \cdot \CH(\Lambda)$
overlap, contradicting that $Y$ is a subsurface of $X$. This proves
part (1).

For part (2), let $\beta$ be a component of $\boundary_0 Y$. Since
$\hat\iota_q$ embeds $Y\ssm \boundary_0Y$, a collar
neighborhood $U$ of $\beta$ in $Y$ maps to a neighborhood $V$ of the puncture
$p = \hat\iota_q(\beta)$. Now if $\gamma$ is a component of
$\boundary'Y$, $q$-compatibility again implies its
image must avoid $V\ssm p$. Since $\hat\iota_q(\gamma)$ cannot
equal $p$, it must be disjoint from it. 
\end{proof}

Note that $Y$ is a $q$-compatible annulus if and only if the core of $Y$ is a cylinder curve in $X$. In this case, the corresponding open flat cylinder in $X$ is $\iota_q(\int (Y))$. In general, if $Y$ is $q$-compatible then one component of $X \ssm \partial_q Y$ is an open subsurface isotopic to the interior of $Y$; this is the image $\iota_q(\int(Y))$ and is denoted $\int_q(Y)$.

The following proposition shows that mild assumptions on
$d_Y(\lambda^+,\lambda^-)$ imply that
$Y$ is $q$-compatible.

\begin{proposition}[$q$-Compatibility] \label{prop: q_compatible}
Let $Y\subset X$ be an essential subsurface. 

If $Y$ is non-annular and $d_Y(\lambda^+,\lambda^-) > 0$,  then $Y$ is $q$-compatible. 

If $Y$ is an annulus and $d_Y(\lambda^+,\lambda^-) > 1$, then
$Y$ is $q$-compatible. In this case, $\int_q(Y)$ is a flat cylinder. 
\end{proposition}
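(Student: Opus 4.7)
My plan is to prove both parts by contrapositive. By part (6) of \Cref{q tight}, $Y$ is $q$-compatible iff the interior of the $q$-hull $\CH_q(\Lambda_Y)\subset \hat X$ is a topological disk. So failing $q$-compatibility is equivalent to the existence of a \emph{pinch point} $p\in \hat X$ at which the boundary $\bigcup l_q$ of the hull self-touches. The boundary consists of $q$-geodesics, and by \Cref{cor:side_coherence} two distinct $l_q, l_q'$ cannot cross each other; combined with CAT(0)-uniqueness of geodesics through non-singular points, this forces any such $p$ to be a singularity of $\hat X$ (a zero of $q$ or a point of $\hat\PP$). Locally the hull at $p$ is then a union of two or more \emph{lobes} joined only at $p$.

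For the non-annular case, suppose $Y$ is not $q$-compatible and pick two lobes $S_1,S_2$ meeting at $p$. Because $p$ is a singularity, the total cone angle at $p$ is $\ge 2\pi$, and each lobe has angular width at least $\pi$; in particular each lobe contains at least one $\lambda^+$-prong and one $\lambda^-$-prong emanating from $p$. Choosing vertical prongs into $S_1$ and $S_2$ and extending outward as generalized $\lambda^+$-leaves (passing through further singularities as needed), \Cref{same compactification} produces a complete $q$-geodesic line $\alpha^+$ through $p$; repeating with horizontal prongs gives $\alpha^-$. The two ends of $\alpha^\pm$ lie in the pair of arcs of $\partial\beta(\til X)$ determined by the lobe-openings at $p$, so after descending to $X_Y$ the arcs $\bar\alpha^+,\bar\alpha^-$ are essential properly-embedded arcs representing vertices of $\pi_Y(\lambda^+)$ and $\pi_Y(\lambda^-)$ respectively, by \Cref{q arcs for AY}. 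Since both arcs end in the same pair of components of $C(X_Y)\ssm X_Y$, they are properly isotopic in $X_Y$ and represent the same vertex of $\A(Y)$, forcing $d_Y(\lambda^+,\lambda^-)=0$ and contradicting $d_Y>0$.

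For the annular case, $\Lambda_Y$ has exactly two points, so $\CH_q(\Lambda_Y)$ is either a flat Euclidean strip (with disk interior, hence $q$-compatible) or a single $q$-geodesic (not $q$-compatible, equivalently the core of $Y$ is not a cylinder curve). In the latter case the $q$-representative $c_q$ of the core must pass through at least one zero $p$, and I run the same pinch construction at $p$ to produce arcs $\bar\alpha^+,\bar\alpha^-$ in the annular cover $X_Y$. Here the two arcs may differ by one unit of twisting around the core: the two lobes at $p$ yield only two choices of lobe-pair and a vertical and horizontal prong pair into the same lobes can have angular directions differing by at most $\pi$. A direct local count at $p$ shows that the algebraic twisting of $\bar\alpha^+$ and $\bar\alpha^-$ around the core differ by at most $1$, giving $d_Y(\lambda^+,\lambda^-)\le 1$ and contradicting $d_Y>1$. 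The "$\int_q(Y)$ is a flat cylinder" assertion is immediate from \Cref{q tight}(3) together with \Cref{lem:int_embed}(1): $\hat\iota_q(Y)$ is a maximal flat cylinder in $\bar X_Y$, and it projects embedded to a flat cylinder in $X$.

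The main obstacle I anticipate is confirming that both ends of each constructed generalized leaf $\alpha^\pm$ actually land in the pair of boundary arcs of $\partial\beta(\til X)$ determined by the lobe openings at $p$, so that $\bar\alpha^\pm$ have matching end-behavior in $C(X_Y)$ and are properly isotopic. This requires careful local prong analysis at $p$ together with the description of $\beta(\til X)$ from \Cref{same compactification}, particularly when extensions of the leaves must pass through further zeros or points of $\hat\PP$ before terminating.
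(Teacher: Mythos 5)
Your reduction via part (6) of \Cref{q tight} and your treatment of the flat-cylinder statement are fine, but the core of the argument has a genuine gap: you direct the prongs \emph{into the lobes of the hull} and then try to control where the resulting generalized leaves go. First, the claim that each lobe subtends angle at least $\pi$ at the pinch point (and hence contains both a vertical and a horizontal prong) is unjustified and in general false: at a cone point of angle $3\pi$, say, the two \emph{complementary} openings already account for at least $\pi$ each (they are bounded by the incoming and outgoing strands of a single boundary geodesic through $p$, and a geodesic subtends angle $\ge \pi$ on each side), so the lobes can have width well below $\pi$ and may contain no prong of one or both foliations. Second, and more seriously, a leaf ray sent into a lobe enters the hull and is in no way confined: for a pseudo-Anosov pair $\lambda^\pm$ the leaves are typically dense, their endpoints on $\boundary\beta(\til X)$ are not determined by the ``lobe openings,'' and even if both ends of $\bar\alpha^+$ and $\bar\alpha^-$ did exit the same pair of components of $C(X_Y)\ssm X_Y$, that does not make two proper arcs properly isotopic (the isotopy class of an arc is not determined by which ends it exits, and in the annular case endpoints matter). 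If your argument worked it would show $d_Y(\lambda^+,\lambda^-)=0$ whenever the hull boundary merely passes through a singularity, which is absurd. The annular step is affected even more: the ``algebraic twisting differs by at most $1$ by a direct local count at $p$'' is not a proof, since twisting around the core is a global quantity that local angle data at $p$ cannot control once the leaves are allowed to run through the hull.

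The paper's proof avoids exactly this by sending the vertical and horizontal rays \emph{out of} the hull, into the complementary side regions ($A_\gamma$, the images of the open sides $\openside{l}$). A ray leaving $p$ into such a side cannot re-cross the bounding geodesic (two geodesics in the CAT(0) space $\hat X$ meet in a connected set), so it stays trapped in a region that is an annulus or a chain of disks parallel to a boundary component of $Y$; confinement there is what forces the vertical and horizontal rays (or the glued generalized leaves $r_0^\pm\union_x r_1^\pm$ when the pinching occurs at an interior singularity or along a saddle connection) to represent the same vertex of $\A(Y)$, giving $d_Y=0$ in the nonannular case. In the annular case the paper does not attempt a twisting count at all: it proves via a Gauss--Bonnet argument that, unless the core bounds a flat cylinder, a vertical and a horizontal ray meeting at angle $\pi/2$ at a singular point of $\gamma_q$ have disjoint interiors, and then chooses the rays using the cyclic order of prongs at $x$ to conclude $d_Y(\lambda^+,\lambda^-)\le 1$. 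To repair your proposal you would need to redirect the construction into the complementary openings and replace ``same ends implies isotopic'' by a confinement argument of this kind; as written, the key homotopy (and the annular twisting bound) do not follow.
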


\begin{proof}
We treat the non-annular case first. Suppose that $d_Y(\lambda^+,\lambda^-)>0$.
  
Recall from \Cref{AY in flat geometry} that we have identified $\til X$ with $\HH^2$,
set $\Lambda\subset \boundary\HH^2$ to be the limit set of
$\Gamma = \pi_1(Y)$, set $\Omega = 
\boundary\HH^2\ssm \Lambda$, and defined $\hat\PP_Y\subset \Lambda$
to be the set of parabolic fixed points of $\pi_1(Y)$. Note that $\hat \PP_Y = \Lambda \cap \hat \PP$.
Further recall from part (6) of \Cref{q tight} that the
map from $Y'$ to $\CH_q(X_Y)$ is an embedding, 
provided the interior of $\CH_q(\Lambda)$ is a disk.
Since $\CH_q(\Lambda)$ is the result of deleting the interior of the side $\side{l}$ from $\hat X$ for each 
hyperbolic geodesic line $l$ in $\boundary \CH(\Lambda)$, it suffices to show that
\begin{enumerate}
\item for each geodesic line $l$ in $\boundary \CH(\Lambda)$, the
  interior of the corresponding $q$-geodesic $l_q$ does not meet $\partial \HH^2 \ssm \side{l}$, and
\item if $l$ and $l'$ are distinct geodesic lines in $\boundary \CH(\Lambda)$ then
$l_q$ and $l'_q$ do not meet in $\widetilde X$. 
\end{enumerate}

First suppose that condition $(1)$ is violated for some geodesic line $l$ in $\partial \CH(\Lambda)$ and point $\hat p \in \partial \HH^2 \ssm \side{l}$. Set $p$ to be the image of $\hat p$ in $\bar X_Y$. Letting $\gamma$ be the boundary component of $\boundary' Y$ that is the image of $l$ in $X_Y$, we see that the image of $l_q$ in $\bar X_Y$, which equals $\gamma_q = \hat \iota_q(\gamma)$,  passes through the point $p$. 

Since $l_q$ is a geodesic in $\hat X$, we see that $\hat p$ is a completion point and so either $\hat p \in \hat \PP_Y$ or $\hat p \in \hat \PP \ssm \hat \PP_Y$.

Assume that $\hat p\in \hat \PP_Y$. Then
 $p\in \widetilde \PP_Y$ corresponds to a puncture of $Y$.
 Recall that by \Cref{q tight}, the image of the open side
 $\openside{l} = \int(\side{l} \cap \widetilde X)$
 in $X_Y$ is either an open annulus or a disjoint union of open disks; in either case, set $A_\gamma$ equal to the component which contains $p$ in its boundary.
  The angle at $p$ in $A_\gamma$ between the incoming and
  outgoing edges of $\gamma_q$ is at least $\pi$, which implies that
  $A_\gamma$ contains a horizontal and a vertical ray $l^-,l^+$
  emanating from $p$. (\Cref{p_gamma}.)

  \realfig{p_gamma}{When $\hat\iota_q(\boundary' Y)$ passes through a 
    point of  $\widetilde \PP_Y$, $d_Y(\lambda^+,\lambda^-) =0$.}
 
  These rays are proper $q$-geodesic lines in $X_Y$ (because $p$ is a
  puncture, not a point of $X_Y$), and hence by \Cref{q arcs for
    AY} represent vertices of $\pi_Y(\lambda^-)$ and
  $\pi_Y(\lambda^+)$, respectively. Further, since the rays only intersect within the annulus or disk $A_\gamma$ and $Y$ is itself nonannular, we see that $l^-$ and $l^+$ in fact represent the same point in $\A(Y)$. (Actually, if $A$ does not contain a flat cylinder, then the interiors of $l^-$ and $l^+$ are disjoint as we show below). Either way, it follows that
  $$
  d_Y(\lambda^+,\lambda^-) =0,
  $$
  a contradiction.
  
 Next assume that $\hat p \in \hat \PP \ssm \hat \PP_Y$. Since $\hat p \notin \side{l} \cap \partial \HH^2$ we may set $A$ to be the component of the image of $\openside{l}$ 
 in $X_Y$ which contains $p \in \hat X_Y$ in its boundary.
 As before, the angle subtended by $\gamma_q$ at $x$ in the boundary of $A$
   is at least $\pi$ (see
  \Cref{modified_pinch-puncture}). A pair of rays $l^\pm$
  emanating from $x$ into $A$ are properly embedded lines and again
  represent the same vertex of $\A(Y)$, giving us
  $d_Y(\lambda^+,\lambda^-) =0$.
  
   \realfig{modified_pinch-puncture}{$Y_q$ is pinched at a completion point.}
  
We conclude that condition $(1)$ is satisfied.
  
Next suppose that geodesics $l$ and $l'$ in the boundary of $\CH(\Lambda)$ violate $(2)$, i.e. $l_q$ and $l'_q$ meet in $\til X$. Let $\til I = l_q \cap l'_q \subset \hat X$ which, since $\hat X$ is CAT$(0)$, is a connected subset of each of $l_q,l'_q$. 
In general, the intersection in $\hat X$ of two $q$-geodesic lines is either a single singularity (possibly a completion point) or a union of saddle connections. 
Because $l_q$ and $l'_q$ meet in $\widetilde X$, $\til I$ contains either a saddle connection or a singularity which is not a completion point.
Let $\gamma,\gamma',\gamma_q,\gamma'_q, I$, be the images in $\bar X_Y$ of $l,l',l_q,l'_q,\til I$, respectively.

Suppose first that $I$ contains a saddle connection $\sigma$.
In this case, let $A$ be the component of the image of 
the open side $\openside{l}$
in $X_Y$ which contains $\sigma$ in its boundary, and define $A'$ similarly. 
(Note that it is possible that $A = A'$ and that $A$ and $A'$ meet along other saddle connections and singularities besides $\sigma$, but this will not change the discussion.)
   
  Any point of $\sigma$ is crossed
  by a pair $l^+,l^-$ of leaves of $\lambda^+,\lambda^-$, which
  as proper arcs of $X_Y$ determine the same vertex of $\A(Y)$. Hence,
  we conclude once again that $d_Y(\lambda^+,\lambda^-) =0.$

  \realfig{common-saddle}{$Y_q$ is pinched along a saddle connection.}

  Finally, suppose that $I$ contains a singularity $x$ in $X_Y$ (i.e. $x$ is not a completion point). Again, set $A$ to be the component of the image of $\openside{l}$
in $X_Y$ which contains $x$ in its boundary and $A'$ to be the component of the image of $\openside{l'}$ in $X_Y$ which contains $x$ in its boundary.
  As before,   there is an angle of at least $\pi$ on the $A$ side of
  $\gamma_q$ and on the $A'$ side of $\gamma'_q$, 
  so we can find pairs of rays $r_0^\pm$ emanating from $x$ on
  the $A$ side, and $r_1^\pm$ emanating on the $A'$ side (see \Cref{common-sing}).
  The unions
  $l^+ = r_0^+\union_x r_1^+$ and   $l^- = r_0^-\union_x r_1^-$ are
  generalized leaves of $\lambda^+$ and $\lambda^-$, respectively, and 
  again  determine the same point in $\A(Y)$ so we conclude 
  that $d_Y(\lambda^+,\lambda^-) =0. $ 

  \realfig{common-sing}{$Y_q$ is pinched at a singularity which is not a completion point.}

   We conclude that if $Y$ is nonannular and $d_Y(\lambda^+,\lambda^-) >0$, then $Y$ is $q$-compatible.

  When $Y$ is an annulus, almost the same argument applies. The
  difference is that the arcs $l^\pm$ we obtain are not homotopic with
  fixed endpoints, and so do not determine the same vertex of
  $\A(Y)$. However, in each case we will show they have disjoint interiors, 
  concluding $d_Y(l^+,l^-) \le 1$, and so
  $$
  d_Y(\lambda^+,\lambda^-) \le 1.
  $$
  
To see this, let $\gamma$ denote the core of $Y$ and let $\gamma_q$ be a geodesic 
 representative in $\bar X_Y$. Supposing that $\int_q(Y)$ is not a flat annulus, we first claim the following:  For any singular point $p$ crossed by $\gamma_q$, if $l^+$ and $l^-$ are rays of $\lambda^+$ and $\lambda^-$, respectively, meeting with angle $\pi/2$ at $p$,  
then the interiors of $l^+$ and $l^-$ do not meet.

\realfig{GB_annulus_2}{The $q$-geodesic $\gamma_q$ is the black hexagon. An interior
  intersection between $l^+$ and $l^-$ contradicts the Gauss--Bonnet
  theorem.}
 
 To establish the claim, assume that the interiors of $l^{\pm}$ meet
 and refer to \Cref{GB_annulus_2}. Let $A_\gamma$ be the complementary
 region of $\gamma_q$ in $\bar X_Y$ containing $p'$, the interior intersection of
 $l^{\pm}$. If $A_\gamma$ is a disk, then the claim follows
 immediately from the uniqueness of geodesics in a CAT(0)
 space. Hence, we may assume that $A_\gamma$ is an annulus. Let
 $l^+_\ep$ be leaf of $\lambda^+$ parallel to $l^+$ and slightly
 displaced to the interior of $A_\gamma$, so that the region $R$
 bounded by $\gamma_q$ and the segments of $l^-$ and $l^+_\ep$
 is an annulus.  The total curvature of the $l^-l^+_\ep$ boundary of
 $R$ is 0 since it is straight except for two right turns of opposite
 signs, and the total curvature of $\gamma_q$ as measured from inside
 $R$ is nonpositive (since each singularity on $\gamma_q$ subtends at
 least angle $\pi$ within $R$).  Since $\chi(R)= 0$ and the Gaussian
 curvature in $R$ (including singularities) is nonpositive, the
 Gauss--Bonnet theorem implies that the total curvature of $\boundary
 R$ is nonnegative. This implies that the total curvature of
 $\gamma_q$ is 0, which means that $\gamma_q$ bounds a flat cylinder,
 contradicting our assumption. This establishes the claim.

We now return to the proof of the proposition. First suppose that $\gamma_q$ passes through a completion point $x$ of $\bar X_Y$. Then, just as in \Cref{modified_pinch-puncture}, we can find a pair of rays $l^\pm$ emanating from $x$ into $A_\gamma$. By the claim above, the interiors of these rays do not meet and so $d_Y(\lambda^+,\lambda^-) \le 1$ as desired. 

Finally, suppose that $\gamma_q$ remains in $X_Y$, i.e. it does not
pass through any completion points. It must still pass through a singularity $x$,
and we note that the total angle at $x$ is at
least $3\pi$. Recall that $\gamma_q$ subtends at least angle $\pi$ at
$x$ to either of its sides and we note that some side of $\gamma$ sees
angle at least $3\pi /2$ at $x$. Let $A$ denote this 
side of $\gamma_q$ and let $A'$ denote the other side. 
Note that $A \neq A'$ since $X_Y$ is an annulus which $\gamma_q$ separates.
The angle of $3\pi/2$ tells us there
are at least $3$ rays of $\lambda^\pm$ emanating into $A$. Now choose
rays $r_0^\pm$ of $\l^\pm$ emanating from $x$ on the $A'$ side. 
Because the $3$ (or more) rays of $\lambda^\pm$ emanating from $x$ into
$A$ alternate between $\lambda^+$ and $\lambda^-$, we can choose from
them two rays $r_1^\pm$ of $\l^\pm$ such that
$r_0^+,r_1^+,r_1^-,r_0^-$ are listed 
in the cyclic ordering of directions at $x$ (either
clockwise or counterclockwise).
The generalized leaves $l^+ = r_0^+\union_x r_1^+$
and $l^- = r_0^-\union_x r_1^-$ then represent arcs in the projections
$\pi_Y(\l^+)$ and $\pi_Y(\l^-)$ and after a slight perturbation these
leaves have disjoint interiors. Hence, again we see that
$d_Y(\l^+,\l^-)\le 1$. 

We conclude that  if $Y$ is an annulus with $d_Y(\l^+,\l^-)\ge 1$ then $Y$ is
$q$-compatible. \qedhere

  \end{proof}

\subsection{Projections and $\tau$-compatibility}
\label{sec: hulls_punctured}
We now show how to associate to a subsurface $Y$ of 
large projection a representative of $Y$ which is ``simplicial'' with
respect to the veering
triangulation. This will later be used to prove that such a subsurface induces a 
pocket of the veering triangulation $\tau$.

Informally, we start with a $q$-compatible subsurface $Y \subset X$
and homotope $\hat \iota_q$ by pushing $\partial_q Y$ onto
$\tau$-edges (this process is depicted locally in \Cref{thull-twice}). Formally, this is done in two steps using the map $\thull(\cdot)$
described in \Cref{sec:t_hulls}, although some care must be taken in
order to ensure that the resulting object gives an embedded
representative of $\int(Y)$.

Call a subsurface $Y \subset X$ \emph{$\tau$-compatible} if the map $\hat\iota_q:Y \to \bar X_Y$ is homotopic rel $\partial_0 Y$ to a map
$\hat\iota_\tau:Y\to \bar X_Y$ which is an embedding on $Y' = Y\ssm \partial_0Y$ such that
\begin{enumerate}
\item $\hat\iota_\tau$ takes each component of  $\boundary'Y = \partial Y \ssm \partial_0 Y$ to a simple curve in $\bar X_Y \ssm \til{\PP}_Y$ composed of
a union of $\tau$-edges and 
  \item the map $\iota_\tau \colon Y \to \bar X$ obtained by composing
    $\hat \iota_\tau$ with $\bar X_Y \to \bar X$ restricts to an embedding from $\int(Y)$ into $X$.
\end{enumerate}

We will show that when $d_Y(\lambda^-,\lambda^+)$ is sufficiently large, the subsurface $Y$ is $\tau$-compatible and in this case we set $\partial_\tau Y = \iota_\tau(\partial'Y)$ which is a collection of $\tau$-edges with disjoint interiors. We call $\partial_\tau Y$ the \emph{$\tau$--boundary} of $Y$ and consider it as a $1$-complex of $\tau$-edges. Similar to the situation of a $q$-compatible subsurface, if $Y$ is $\tau$-compatible then one component of $X \ssm \partial_\tau Y$ is an open subsurface isotopic to the interior of $Y$; this is the image $\iota_\tau(\int(Y))$ and is denoted $\int_\tau (Y)$.

\begin{theorem}[$\tau$-Compatibility]\label{thm: tau-compatible}
Let $Y\subset X$ be an essential subsurface. 
\begin{enumerate}
\item If $Y$ is nonannular and $d_Y(\lambda^+,\lambda^-) > 0$, 
 then $Y$ is $\tau$-compatible.
 \item If $Y$ is an annulus and $d_Y(\lambda^+,\lambda^-) > 1$, then
$Y$ is $\tau$-compatible. 
\end{enumerate}
\end{theorem}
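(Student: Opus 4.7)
The plan is to begin with the $q$-compatible representative supplied by \Cref{prop: q_compatible} and then, in two successive rounds of homotopy, push $\partial_q Y$ outward onto a cycle of $\tau$-edges using the triangle-hull map $\thull(\cdot)$ from \Cref{sec:t_hulls}. Since the hypotheses on $d_Y(\lambda^+,\lambda^-)$ match those of \Cref{prop: q_compatible}, we immediately obtain an embedding $\hat\iota_q\colon Y'\to \bar X_Y$ whose boundary image $\partial_q Y$ is a disjoint (except possibly at singularities) union of cycles of saddle connections avoiding $\til\PP_Y$ (by \Cref{lem:int_embed}). Orient each boundary cycle with transverse orientation pointing out of $\int_q(Y)$, and form $\thull(\partial_q Y)$, a homotopic cycle sitting just outside $\int_q(Y)$. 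This gives the first step, producing a new (still possibly non-$\tau$) saddle-connection boundary; iterating this process once more yields a cycle whose constituent saddle connections each span a singularity-free rectangle, i.e. are $\tau$-edges.

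The key reason to expect that two applications of $\thull$ suffice is that a single application of $\thull$ replaces a saddle connection $\sigma$ by the hypotenuses $h(t)$ of maximal right triangles $t\in\cT(\sigma)$, so each new edge has singularities at two opposite corners of its spanning rectangle. Any remaining singularities in the interior of this rectangle lie on the side of $h(t)$ away from $\sigma$, and a second application of $\thull$ in that direction transfers each such singularity onto the new boundary, eliminating the obstruction to being a $\tau$-edge. A local case analysis, of the type depicted in \Cref{thull-twice}, handles each type of triangle-rectangle configuration.

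For embeddedness of $\hat\iota_\tau\colon Y'\to \bar X_Y$ and the avoidance of $\til\PP_Y$ by the new boundary, I would apply \Cref{disjoint_thulls} to each pair of adjacent boundary saddle connections of $\partial_q Y$: any arc of $\lambda^+$ (or $\lambda^-$) joining two such saddle connections lies outside $\int_q(Y)$, so the polygons $P(\sigma)$ have pairwise disjoint interiors, and pushing along these polygons enlarges but does not pinch the $q$-compatible region. Since the homotopy fixes $\partial_0 Y$ at completion points throughout, the resulting map remains a homotopy of $\hat\iota_q$ rel $\partial_0 Y$. To descend to an embedding of $\int(Y)$ into $X$, I would combine this with \Cref{r disjoint}: since $\partial_\tau Y$ is a union of $\tau$-edges with disjoint interiors in $\bar X_Y$, projecting to $\bar X$ preserves disjointness of interiors, and the interior-embedding conclusion in \Cref{lem:int_embed}(1) for the $q$-compatible case carries over to the $\tau$-compatible representative. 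The annular case ($d_Y(\lambda^+,\lambda^-)>1$) proceeds identically after noting that $\int_q(Y)$ is a flat cylinder by \Cref{prop: q_compatible}, and we push each of its two boundary circles outward.

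The main obstacle I anticipate is verifying that exactly two applications of $\thull$ suffice to produce $\tau$-edges while simultaneously maintaining embeddedness: the first push may create new singularities on the boundary where the $P(\sigma)$ polygons meet adjacent ones, and one must check carefully that after the second push the various polygons still meet only in the prescribed combinatorial manner. Controlling this requires combining the Gauss--Bonnet/parallelism constraints used in the proof of \Cref{disjoint_thulls} with the geometry of $\cT$-triangles along a single boundary cycle, so that no two pushed edges create a spurious intersection in $\bar X_Y$ or, after projection, in $\bar X$.
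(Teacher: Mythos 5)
Your overall strategy (start from the $q$-compatible representative of \Cref{prop: q_compatible}, then push $\partial_q Y$ onto $\tau$-edges via two applications of $\thull$) sounds superficially like the paper's proof, but the core mechanism you propose --- two successive \emph{outward} pushes --- does not work, and the claimed reason is geometrically backwards. For a maximal right triangle $t\in\cT(\sigma)$ attached to $\sigma$ on the outward side, put the right-angle vertex at the origin with the legs on the axes and the singularities at $(p,0)$ and $(0,q)$. The new edge $h(t)$ spans the rectangle $[0,p]\times[0,q]$; the half of this rectangle on the side of $h(t)$ \emph{away} from $\sigma$ (containing the origin) lies inside $t$ and is automatically singularity-free, while the half \emph{toward} $\sigma$ may spill past the hypotenuse back into $\int_q(Y)$, where singularities of $q$ can sit. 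So the obstruction to $h(t)$ being a $\tau$-edge lies on the \emph{inward} side (this is exactly why the paper notes, after \Cref{thull structure}, that edges of $\thull(\sigma)$ need not be $\tau$-edges), and no number of further outward pushes can remove singularities on that side: each outward application produces edges whose potentially singular half-rectangle again points back toward the surface. Your claim that "any remaining singularities lie on the side of $h(t)$ away from $\sigma$, and a second application of $\thull$ in that direction transfers each such singularity onto the new boundary" is therefore false in general, and with it the assertion that two outward rounds land on $\tau$-edges.

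The paper resolves this by pushing in the opposite order: first an \emph{inner} $\thull$ (transverse orientation pointing into $Y$), then an \emph{outer} one, and it is the composite that coincides with the rectangle hull $\rhull(\thull(\hat\iota_q(\partial'Y)))$, whose edges are $\tau$-edges by construction (cf.\ \Cref{thull-twice} and \Cref{rmk:fully}). Crucially, the distance hypothesis is what legitimizes the inner step: it is used to show the inward-pointing triangles contain no completion points, are entirely contained in $Y_q$, and that triangles based on different boundary segments cannot overlap, so the intermediate map $\hat\iota'$ is still an embedding; a further universal-cover argument (disjointness of the vertical push-fibers $n_p$ by convexity of $\CH_q(\Lambda)$) gives embeddedness of $\hat\iota_\tau$, and \Cref{disjoint_thulls} applied to deck translates gives the embedding of $\int(Y)$ in $X$ --- not \Cref{r disjoint}, which concerns rectangle hulls of disjoint saddle connections and does not directly apply to your doubly-pushed boundary. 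A symptom of the gap is that your scheme uses $d_Y(\lambda^+,\lambda^-)$ only through $q$-compatibility; in the paper the hypothesis is invoked repeatedly precisely to control the inner push, which your outline never confronts. The annular case has the same issue: there the paper shows (via Gauss--Bonnet) that the \emph{inner} $\thull$ is the identity on the flat-cylinder boundary and then performs the single outer step, rather than pushing both circles outward twice.
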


\begin{proof}
Suppose that $d_Y(\lambda^+,\lambda^-) > 0$ if $Y$ is nonannular and 
$d_Y(\lambda^+,\lambda^-) > 1$ otherwise.
  By \Cref{prop: q_compatible}, $Y$ is $q$-compatible and so $\hat \iota_q:Y\to \bar X_Y$ is an
  embedding on $Y'$.
  Let $Y_q$ denote its image. We first suppose that $Y$ is not an annulus.

 Give $\partial ' Y$ the transverse orientation pointing into $Y$. For
 any saddle connection $\sigma$ in $\hat\iota_q(\boundary' Y)$ and any
 triangle $t \in \cT(\sigma)$ pointing into $Y$ (see
 \Cref{sec:t_hulls} for definitions), note that the
 singularities of $\bar X_Y$ in $\partial t$ are \emph{not} completion
 points of $\bar X_Y$, that is they do not correspond to punctures of
 $X$. This is because any completion point lying in $t$ is the
 endpoint of leaves $l^\pm$ of $\lambda^\pm$ whose initial segments
 lie in $t$. These leaves correspond to essential proper arcs of $X_Y$
 which are homotopic giving $d_Y(\lambda^-,\lambda^+) =0$, a
 contradiction.
 
 Similarly, we can conclude that for each saddle connection $\sigma$
 in $\hat\iota_q(\boundary' Y)$ and any $t \in \cT(\sigma)$ pointing
 into $Y$, the triangle $t$ is entirely contained in $Y_q$. Otherwise,
 similar to the proof of \Cref{prop: q_compatible}, we find leaves
 $l^+$ and $l^-$ in $\bar X_Y$ whose intersection with $Y_q$ is
 contained in $t$ and hence whose projections to $\A(Y)$ are
 equal. See the left side of \Cref{modified_thull-overlap}. Since $d_Y(\lambda^+,\lambda^-)>0$ this is impossible.
 
  Hence, the map $\thull^+(\hat\iota_q|_{\boundary' Y})$ (as defined in
  (\ref{thull f}) in \Cref{sec:t_hulls}) is
  homotopic to $\hat\iota_q|_{\boundary' Y}$ in $\bar X_Y \ssm \til{\PP}_Y$ 
  by pushing across the polygonal regions
  given by \Cref{thull structure} along leaves of $\lambda^+$. This
  extends to a homotopy of $\hat
  \iota_q$ to a map $\hat \iota' \colon Y \to \bar X_Y$
  which we claim is still an embedding. 
  (Note that, in the case that $X$ is fully-punctured,
  $\hat \iota' = \hat\iota_q$, 
  since all singularities of fully-punctured surfaces are completion points.)

  To prove that $\hat\iota'$ is an embedding, let $C$ be a component of the preimage of
  $\hat\iota_q(Y')$ in 
  $\hat X$
  (using the notation of \Cref{AY in flat geometry}, $C$ is a translate of $\CH_q(\Lambda)$).
   If $\alpha$ is a geodesic segment in $\boundary C$, the
  triangles used in the hull construction are attached to $\alpha$ and are contained in $C$. 
  If such a triangle $t$ intersects a triangle $t'$ from
  a different segment $\alpha'$, they overlap as in the right side of
  \Cref{modified_thull-overlap}. Any  two
  arcs $l^+,l^-$ of $\lambda^+$ and $\lambda^-$ passing through a point
  in the overlap must intersect both $\alpha$ and $\alpha'$.
  These arcs are
  at distance 0 in $\A(Y)$, since they can be isotoped to each other rel $\boundary
  Y$.
  Hence $d_Y(\lambda^-,\lambda^+) = 0$, contradicting the
  hypothesis. Therefore, $t,t'$ cannot overlap.


\realfig{modified_thull-overlap}{Left: If $t \in \cT(\sigma)$ (in red) is not contained in $Y_q$ then $d_Y(\lambda^+,\lambda^-) =0$. Right:
An overlap of two hull triangles. Any completion point in the boundary of a hull triangle does not correspond to a puncture in $\til{\PP}_Y$.}

We conclude that the polygonal regions of our homotopy are embedded
and disjoint, and thus the homotopy can be chosen so that
$\hat \iota'$ is an
embedding. Since the image of $\hat \iota'$ is contained in the image of $\hat \iota_q$, we apply \Cref{lem:int_embed} to get that the projection $\iota' \colon Y \to \bar X$ restricts to an embedding on $\int(Y)$.


  Now orient $\boundary'Y$ in the opposite direction, pointing out of
  the surface, and apply $\thull$ again, this time to $\hat \iota'(\partial' Y)$.
  The triangles in the construction now extend outside the surface,
  and the result of the operation is the rectangle hull
  $\rhull(\thull(\hat\iota_q(\boundary' Y)))$,
 which is therefore composed of $\tau$-edges. 
  Using the homotopy pushing $\hat \iota'|_{\partial' Y}$
  outward along leaves of $\lambda^+$ to $\thull^+(\hat\iota'|_{\boundary' Y})$ (again using
   \Cref{thull structure}) we obtain our final map 
  $\hat\iota_\tau$. See \Cref{thull-twice}. It remains to show that 
  $\hat\iota_\tau \colon Y  \to \bar X_Y$ has the required properties.
To prove this, let us recapitulate the construction in the universal
cover. 


  \realfig{thull-twice}{An inner $\thull$ followed by outer $\thull$
    yields $\tau$-edges. This locally depicts the homotopy from $\hat \iota_q$ to $\hat \iota_\tau$.}

As before, let $C = \CH_q(\Lambda)$. The map $\hat \iota'$ lifts to a
$\pi_1(Y)$-equivariant homeomorphism $C \to C'$, where $C'$ is
obtained by giving each saddle connection $\kappa$ in the boundary of
$C$ the transverse orientation pointing into $C$ and removing the
polygons $P(\kappa)$ given in \Cref{thull structure}. This map is
equivariantly homotopic to the identity by pushing along leaves of the
vertical foliation. 

The outward step of our construction then pushes back along leaves of
the vertical foliation to obtain a $\pi_1(Y)$-equivariant map $C'\to
C_\tau\subset \hat X$, so that the composition $C\to C'\to C_\tau$ is a lift of the map
$\hat\iota_\tau \colon Y'
\to \bar X_Y$. 
To show that $\hat\iota_\tau \colon Y'  \to
\bar X_Y$ is an embedding, it suffices to show that the composition
$C\to C_\tau$ is a homeomorphism.


For every non-singular point $p\in \boundary C$ there is an arc $n_p$ in
$\lambda^+$ such that the deformation of $C$ to $C_\tau$ is supported
on the union $\bigcup n_p$, and preserves each $n_p$. Thus to show
that $C\to C_\tau$ is a homeomorphism it suffices to show that $n_p
\intersect n_{p'} =\emptyset$ for each $p\ne p'$ in $\boundary C$.
The interior pieces,  $n_p\intersect C$, are already disjoint for
distinct points, by our
construction. Thus if $n_p$ intersects $n_{p'}$ their union is an
interval $J$ in a leaf of $\lambda^+$ with some subinterval between $p$ and $p'$ lying outside
$C$. This contradicts the convexity of $C$.


To show that $\iota_\tau$ is an embedding when restricted to $\int(Y)$, it suffices to check that the interior of $C_\tau$ is disjoint from all its translations under the entire deck group $\pi_1(X)$. To see this, take $g \in \pi_1(X)$ so that $C_\tau$ and $g \cdot C_\tau$ are distinct and intersect. Since $\iota' \colon \int(Y) \to X$ is an embedding, $C'$ and $g \cdot C'$ meet only along their boundary. Further, if $\sigma$ is a saddle connection in $\partial C' \cap \partial (g \cdot C')$, then $\sigma$ is the hypotenuse of a singularity-free triangle pointing into $C'$ as well as one pointing into $g\cdot C'$. Hence, $\sigma$ is a $\tau$-edge and so is fixed under the map $C' \to C_\tau$. 

Now if the interiors of $C_\tau$ and $g\cdot C_\tau$ intersect there must be saddle connections $\sigma,\kappa \subset \partial C'$ 
such that
$P(\sigma)$ and  $P(g \cdot \kappa)$ have intersecting interiors. (Here, $\sigma,\kappa$ are oriented out of $C'$.) 
By the previous paragraph, $\sigma$ and $g\cdot \kappa$ are distinct.
As $C'$ and $g \cdot C'$ meet only along their boundary, $\sigma$ and
$g \cdot \kappa$ have disjoint interiors and any arc $l$ of $\lambda^+$ joining $\sigma$ to $g \cdot \kappa$ within $P(\sigma) \cup P(g \cdot \kappa)$ lives outside of $C'$ and $g \cdot C'$. In particular, the chosen transverse orientations on $\sigma$ and $g \cdot \kappa$ point to the interior of $l$.
However, by \Cref{disjoint_thulls}, in this situation, the interiors
of $P(\sigma)$ and  $P(g \cdot \kappa)$ do not intersect. It follows that $\iota_\tau \colon \int(Y) \to  X$ is an embedding.

It only remains to prove property $(1)$ of the definition of $\tau$-compatible. 
Since $\hat \iota_\tau \colon Y' \to
\bar X_Y$ is an embedding, it follows that 
$\hat \iota_\tau|_{\partial' Y}$ is an embedding, and its image
does not meet
$\widetilde P_Y$ by the same argument used to prove item $(2)$ of
\Cref{lem:int_embed}. 
By construction the image $\hat \iota_\tau (\partial' Y)$ is composed of
$\tau$-edges.

Now suppose that $Y$ is an annulus. Then $\hat \iota_q(Y)$ is the
(nondegenerate) maximal flat cylinder of $\bar X_Y$ by \Cref{prop:
  q_compatible}. Choosing the inward-pointing orientation for
$\boundary Y$, we claim that 
$\thull^+(\hat \iota_q|_{\partial Y}) = \hat \iota_q|_{\partial Y}$:
Otherwise, there must be a
saddle connection $\sigma$ on the boundary of the flat annulus
$\hat \iota_q(Y)$, and a triangle $t$ pointing into the annulus
with hypotenuse on $\sigma$, which encounters a singularity or
puncture $x$ on the other side of the annulus.
The picture is similar to the left side of \Cref{modified_thull-overlap}.
A variation on the Gauss--Bonnet argument
in the annulus case of \Cref{prop: q_compatible} then produces 
vertical and horizontal leaves passing through $x$ which have disjoint
representatives, and hence $d_Y(\lambda^+,\lambda^-) \le 1$.
Thus the inward step of the process is the identity, and the outward
step and the rest of the proof proceed just as in the nonannular case. 
\end{proof}

\begin{remark} \label{rmk:fully}
From the proof of \Cref{thm: tau-compatible}, we record the fact that if $X$ is fully-punctured and $Y$ satisfies 
the hypotheses of \Cref{thm: tau-compatible}, then $\thull^+(\hat \iota_q|_{\partial Y}) = \hat \iota_q|_{\partial Y}$
and $\iota' = \hat \iota_q$.
Hence, in this case we have that $\partial_\tau Y = \rhull(\partial_q Y)$.
\end{remark}

\section{Embedded pockets of the veering triangulation and bounded projections}
\label{pockets}

In this section, let $X$ be fully-punctured with respect to the
foliations $\lambda^\pm$ of a pseudo-Anosov $f:X\to X$, and let $M$ be
the mapping torus. Recall that every fiber associated to the fibered
face $\FF$ of $X$ must also be fully-punctured because they are transverse
to the same suspension flow, and hence that $\FF$ is
a \emph{fully-punctured fibered face}. 

We now prove our two main theorems on the structure of subsurface projections 
in a fully-punctured fibered face, \Cref{th:bounding projections} and
\Cref{th:sub_dichotomy_fully_punctured}. The main tools in the proof
are the structure and embedding theorems for pockets associated with
high-distance subsurfaces, which we develop below. Recall that $\diam_Z(\cdot)$ denotes the diameter of $\pi_Z(\cdot)$ in $\A(Z)$ and that subsurfaces $Y$ and $Z$ \emph{overlap} if, up to isotopy, they are neither disjoint nor nested.

\subsection{Projections and $\tau$--compatible subsurfaces}
We begin by discussing projection to $\tau$-compatible subsurfaces.

\begin{lemma} \label{lem:overlap_tau}
Let $Y$ and $Z$ be $\tau$-compatible subsurfaces of $X$ and let $K \subset X$ 
be a disjoint collection of saddle connections which correspond to edges from $\tau$. 
Then 
\begin{enumerate}
\item If $K$ meets  $\int_\tau(Y)$, then $\pi_Y(K) \neq \emptyset$, and  $\diam_Y(\pi_Y (K)) \le 1$. 
\item If $Y$ and $Z$ are disjoint, then so are $\int_\tau(Y)$ and $\int_\tau(Z)$.
\item If $Y$ and $Z$ overlap, then $\diam_Z(\partial Y \union \partial_\tau Y) \le 1$.
\item The subsurface $\int_\tau(Y)$ is in minimal position with the foliations $\lambda^\pm$. In particular, the arcs of $\int_\tau(Y) \cap \lambda^\pm$ agree with the arcs of $\pi_Y(\lambda^\pm)$.
\end{enumerate}
\end{lemma}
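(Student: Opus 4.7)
The plan is to prove the four parts in order, exploiting the CAT(0) geometry of $(\hat X, q)$ and the structural properties of $\tau$-compatible representatives established in \Cref{surface_reps}. The common thread is an essentiality/bigon argument using uniqueness of geodesics in the CAT(0) completion $\hat X$.

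For (1), I would observe that $K \cap \int_\tau(Y)$ is a disjoint collection of subarcs of $\tau$-edges, each a $q$-geodesic segment with endpoints on $\partial_\tau Y$; distinct $\tau$-edges have disjoint interiors (as simplices of $\tau$), so these arcs meet $\partial_\tau Y$ only at their endpoints. Lifting to $X_Y$ (where $\int_\tau(Y)$ embeds by $\tau$-compatibility), the collection remains pairwise disjoint, hence yields a simplex of $\A(Y)$ provided each component is essential. The key step is essentiality: a boundary-parallel component $\alpha$ would cobound a disk with an arc of $\partial_\tau Y$, and lifting this disk to $\hat X$ would give a bigon bounded by two distinct $q$-geodesics with common endpoints, violating CAT(0) uniqueness of geodesics. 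Thus $\pi_Y(K)$ is a nonempty simplex of $\A(Y)$, of diameter at most $1$.

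For (2), I would apply (1) with $K = \partial_\tau Z$: since $Y$ and $Z$ admit disjoint representatives, $\pi_Y(\partial Z) = \emptyset$, and isotopy of $\partial_\tau Z$ to $\partial Z$ gives $\pi_Y(\partial_\tau Z) = \emptyset$. The contrapositive of (1) then forces $\partial_\tau Z \cap \int_\tau(Y) = \emptyset$; by symmetry, $\partial_\tau Y \cap \int_\tau(Z) = \emptyset$. Since $\int_\tau(Z)$ is open and connected with topological boundary in $\partial_\tau Z \subset X \ssm \int_\tau(Y)$, it lies in a single component of $X \ssm \partial_\tau Y$, and this component cannot be $\int_\tau(Y)$ itself (otherwise $Z$ would be isotopic into $Y$, contradicting that $Y$ and $Z$ are distinct and disjoint up to isotopy). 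For (3), since $\partial_\tau Y$ and $\partial Y$ are isotopic, their projections agree, so $\diam_Z(\partial Y \cup \partial_\tau Y) = \diam_Z(\partial Y)$; the components of $\partial Y$ are pairwise disjoint curves in $X$, which lift to a pairwise disjoint family in $X_Z$, so $\pi_Z(\partial Y)$ spans a simplex of $\A(Z)$ and has diameter at most $1$.

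For (4), minimal position of $\int_\tau(Y)$ with $\lambda^\pm$ reduces to the absence of bigons between $\partial_\tau Y$ and leaves of $\lambda^\pm$; since both the $\tau$-edges of $\partial_\tau Y$ and the leaves of $\lambda^\pm$ are $q$-geodesic, any such bigon would lift to two distinct $q$-geodesics in $\hat X$ sharing their endpoints, contradicting CAT(0) uniqueness. The same argument rules out boundary-parallel components of $\int_\tau(Y) \cap \lambda^\pm$, so every arc of the intersection is an essential proper arc of $\int_\tau(Y)$, and these match $\pi_Y(\lambda^\pm)$ via \Cref{q arcs for AY}. The main difficulty running through all four parts is the essentiality/minimality argument, which is handled uniformly by lifting to the completion $\hat X$ and applying CAT(0) uniqueness of geodesics.
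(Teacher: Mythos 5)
The unifying flaw in your proposal is that you treat $\partial_\tau Y$ (and $\partial_\tau Z$) as if it were $q$-geodesic, or isotopic to $\partial Y$, when it is neither: it is the rectangle-hull boundary, a $1$-complex of saddle connections meeting at punctures, obtained by pushing the genuinely geodesic $\partial_q Y$ outward across the polygons $P(\sigma)$. This breaks your argument for (1): a ``bigon'' bounded by a geodesic subarc of a $\tau$-edge and a path in $\partial_\tau Y$ is a disk bounded by a geodesic and a piecewise-geodesic path, and CAT(0) uniqueness of geodesics does not exclude such a disk. The paper's proof of (1) does its real work exactly at the point your argument skips: it first shows, using the spanning rectangle of the edge and the fact that $\partial_\tau Y=\rhull(\partial_q Y)$ in the fully-punctured case (\Cref{rmk:fully}), that a $\tau$-edge meeting $\int_\tau(Y)$ must already meet $\int_q(Y)$ (otherwise it would lie in, or cross, some $P(\sigma)$ and be forced either into $\rhull(\sigma)\subset\partial_\tau Y$ or across $\sigma\subset\partial_q Y$), and only then invokes the local convexity of the $q$-hull (\Cref{q tight}) to get an essential lift to $\bar X_Y$. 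The same defect undermines your (4): a bigon between a leaf and $\partial_\tau Y$ can run through several edges and vertices, so it does not lift to two geodesics with common endpoints; the paper proves minimal position first for $\int_q(Y)$, where the boundary is geodesic and your CAT(0) argument is valid, and then transfers it to $\int_\tau(Y)$ using that the homotopy $\partial_q Y\to\partial_\tau Y$ moves points along leaves of $\lambda^+$ (or $\lambda^-$).

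In (2), the step ``$\pi_Y(\partial_\tau Z)=\emptyset$ since $\partial_\tau Z$ is isotopic to $\partial Z$'' is unjustified: $\partial_\tau Z$ is a complex of proper puncture-to-puncture arcs, not a multicurve isotopic to $\partial Z$, and its edges protrude to the outside of $Z$ --- precisely the region where $Y$ lives --- so showing that they do not cross $Y$ is the heart of statement (2), not something that follows from the disjointness of $Y$ and $Z$ (an arc disjoint from a representative of $Z$ can perfectly well cross $Y$ essentially). The paper supplies the missing geometric input: the $q$-hulls of $Y$ and $Z$ have disjoint interiors because their limit sets do not link (\Cref{cor:side_coherence}), and the attached polygons $P(\sigma)$, $P(\kappa)$ for $\sigma\subset\partial_q Y$, $\kappa\subset\partial_q Z$ have disjoint interiors by \Cref{disjoint_thulls}; none of this appears in your argument, so as written it is circular. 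The premise in (3) that $\partial_\tau Y$ is isotopic to $\partial Y$ is false for the same reason, though there the repair is easy and is what the paper does: since $\int_\tau(Y)$ is an embedded copy of $\int(Y)$, the multicurve $\partial Y$ has a representative disjoint from the saddle connections of $\partial_\tau Y$, which gives $\diam_Z(\partial Y\cup\partial_\tau Y)\le 1$ by disjointness rather than by equality of projections.
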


\begin{proof}
For item (1), the main point is to show that an edge of $K$ that
meets $\int_\tau(Y)$ lifts to an {\em essential edge} in $\bar X_Y$.
This is true for edges meeting $\int_q(Y)$, using the local CAT(0) geometry
of $\bar X_Y$ and the fact that $\hat\iota_q(Y')$ is a locally convex embedding.


Thus it will suffice to show that any $\tau$-edge $e$ meeting
$\int_\tau(Y)$ must also meet $\int_q(Y)$.
Suppose, on the contrary, that $e$ meets $\int_\tau(Y)$ but not $\int_q(Y)$.
Then $e$ meets the interior of a polygon $P(\sigma)$ where $\sigma$ is
an outward-oriented saddle connection in $\boundary_q Y$
(recall from \Cref{rmk:fully} 
that, since $X$ is fully-punctured, the
inner $\thull$ step in the construction of $\hat\iota_\tau$ is the
identity, and the outer $\thull$ is in fact a rectangle hull).
Let
$R$ be the singularity-free rectangle spanned by $e$. If $e$ is
contained in $P(\sigma)$ then $R$ can be extended to a rectangle whose
diagonal lies in $\sigma$, and hence $e$ is one of the edges of
$\rhull(\sigma)$; but this contradicts the assumption that $e$ meets
$\int_\tau(Y)$. Thus $e$
crosses some edge $f$ of
$\rhull(\sigma)$. However, $f$ is contained in a singularity-free
triangle whose hypotenuse lies along $\sigma$ and so $\sigma$ must
cross the rectangle $R$ either top-to-bottom or side-to-side. In
either case, we see that $e$ crosses $\sigma \subset \partial_q Y$,
a contradiction.
We conclude that if a $\tau$-edge meets $\int_\tau(Y)$, then it also meets $\int_q(Y)$ 
and hence has a well-defined projection to $Y$.
The diameter bound in item $(1)$ is then immediate since $K$ is a disjoint collection of essential arcs of $\A(X)$. 

For item $(2)$, first note that when $Y$ and $Z$ are disjoint subsurfaces of $X$, 
the interiors $\int_q(Y)$ and $\int_q(Z)$ are also disjoint. This follows from \Cref{cor:side_coherence} and the $q$-hulls construction in \Cref{q tight}. More precisely, 
let $\Lambda_Y$ and $\Lambda_Z$ be the limit sets of $Y$ and $Z$ in $\partial \HH^2$ (using our identifications from \Cref{AY in flat geometry}). Since $Y$ and $Z$ do not intersect, $\Lambda_Y$ and $\Lambda_Z$ do not link in $\partial \HH^2$ and so $\CH_q(\Lambda_Y)$ and $\CH_q(\Lambda_Z)$ have disjoint interiors by \Cref{cor:side_coherence}. This implies that $\int_q(Y)$ and $\int_q(Z)$ are disjoint in $X$.

To obtain $\int_\tau (Y)$ from $\int_q (Y)$ we append to each saddle
connection $\sigma$ in $\partial_q Y$ the (open) polygon $P(\sigma)$,
where $\sigma$ is oriented out of $Y$. We obtain $\int_\tau (Z)$  from
$\int_q (Z)$ by the same construction. 
Since  $\int_q(Y)$ and $\int_q(Z)$ are disjoint in $X$, it suffices to show that $P(\sigma)$ and $P(\kappa)$ have disjoint interiors, where $\sigma \subset \partial_qY$ and $\kappa \subset \partial_q Z$. If $\sigma = \kappa$, then this saddle connection spans a singularity-free rectangle and $P(\sigma) = \sigma = \kappa = P(\kappa)$. Otherwise, $\sigma$ and $\kappa$ have disjoint interiors and \Cref{disjoint_thulls} implies that $P(\sigma)$ and $P(\kappa)$ have disjoint interiors, as required.
This proves item (2).


Since $\int_\tau(Y)$ is an embedded representative of the interior of $Y$, $\partial Y$
has a representative disjoint from the collection of saddle
connections in $\partial_\tau Y$. Hence $\diam_Z(\partial Y \union
\partial_\tau Y) \le 1$, proving item $(3)$. For
item $(4)$, first note that the subsurface $\int_q(Y)$ is in minimal position with the foliations $\lambda^\pm$. This is immediate from the local CAT$(0)$ geometry in $\bar X_Y$ and
the fact that $\lambda^\pm$ are geodesic: any bigon in $\bar X_Y$ between $\hat \iota_q(\partial' Y)$ and a leaf of $\lambda^\pm$ would lift to a bigon in $\hat X$ bounded by two geodesic segments, a contradiction to uniqueness of geodesics in $\hat X$.
 The statement for
$\int_\tau(Y)$ then follows from the fact that the homotopy from
$\boundary_q Y$ to $\boundary_\tau Y$ can be taken to move either along
vertical or along horizontal leaves, using either $\thull^+$ or $\thull^-$ as in 
the proof of \Cref{thm: tau-compatible}. 
\end{proof}

\subsection{Pockets for a $\tau$-compatible subsurface} 
Suppose that $Y \subset X$ is \linebreak $\tau$--compatible. By
\Cref{cor:top}, the set $T(\boundary_\tau Y)$ of sections containing
$\boundary_\tau Y$ contains a top and a bottom section, denoted $T^+ =
T^+(\boundary_\tau Y)$
and $T^- = T^-(\boundary_\tau Y)$, which between them bound a number of
pockets. See \Cref{sections} for terminology related to sections and pockets.
Our assumption on $d_Y(\lambda^-,\lambda^+)$ will imply that one
of these pockets is isotopic to a thickening of $Y$, as explained in
the following proposition:

\begin{proposition}[Pockets in $\tau$]\label{Y pocket}
  Let $(X,q)$ be fully-punctured and $Y\subset X$ an essential
  nonannular subsurface. 
  \begin{enumerate}
    \item If $d_Y(\lambda^-,\lambda^+) > 0$ then
$d_Y(T^+,\lambda^+) = d_Y(T^-,\lambda^-) = 0$. 
    \item If $d_Y(\lambda^-,\lambda^+) > 2$ then
 $T^+$ and  $T^-$ bound
      a pocket $U_Y$ whose interior is isotopic to a thickening of
      $\int(Y)$.
  \end{enumerate}
  When $Y$ is an annulus, 
  \begin{enumerate}
    \item If $d_Y(\lambda^-,\lambda^+) > 1$ then
$d_Y(T^+,\lambda^+) = d_Y(T^-,\lambda^-)  = 1$. 
    \item If $d_Y(\lambda^-,\lambda^+) > 4$ then
 $T^+$ and  $T^-$ bound
      a pocket $U_Y$ whose interior is isotopic to a thickening of
      $\int(Y)$.
  \end{enumerate}

\end{proposition}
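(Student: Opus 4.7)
The plan is to establish part $(1)$ first---which controls how the top and bottom sections $T^\pm$ interact with the foliations $\lambda^\pm$---and then derive part $(2)$ by a triangle-inequality argument that prohibits $T^+$ and $T^-$ from sharing any edge in the interior of $\int_\tau(Y)$. I will focus on the nonannular case; the annular case is parallel with an additive shift of $1$.

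For part $(1)$, the goal is to exhibit an edge $e$ of $T^+$ inside $\int_\tau(Y)$ whose isotopy class in $\A(Y)$ (using endpoint-sliding equivalence) coincides with that of a (generalized) leaf of $\lambda^+$. The key is the extremality of $T^+$: by \Cref{cor:top}, $T^+$ admits no upward flip that stays in $T(\partial_\tau Y)$, so no edge of $T^+$ interior to $\int_\tau(Y)$ is upward-flippable (any such flip would not disturb $\partial_\tau Y$, contradicting top-ness). Choosing the uppermost (maximal-slope) interior edge $e$ of $T^+$ then yields an $e$ whose spanning maximal singularity-free rectangle $R_e$ must extend vertically past $\partial_\tau Y$ on both sides; a central vertical leaf of $R_e$ is then a segment of a leaf of $\lambda^+$ which, viewed as a proper arc in $\int(Y)$, realizes the same vertex of $\A(Y)$ as $e$ after endpoint sliding on the $\partial' Y$-components. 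The symmetric argument, using downward flippability, gives $d_Y(T^-,\lambda^-) = 0$. For the annular case the analogous argument produces $d_Y(T^\pm,\lambda^\pm) = 1$, since on an annulus nonequal arcs are already separated by twisting and the hypothesis $d_Y(\lambda^-,\lambda^+) > 1$ is needed (via \Cref{thm: tau-compatible}) merely to make $Y$ $\tau$-compatible in the first place.

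For part $(2)$, assume $(1)$. By \Cref{lem:overlap_tau}$(1)$, the projection $\pi_Y(T^\pm)$ is a simplex in $\A(Y)$ of diameter $\le 1$. Suppose for contradiction that $T^+$ and $T^-$ share some $\tau$-edge $e$ lying in $\int_\tau(Y)$. Then $e \in \pi_Y(T^+) \cap \pi_Y(T^-)$, and the triangle inequality in $\A(Y)$ yields
\[
  d_Y(\lambda^+, e) \;\le\; d_Y(\lambda^+, T^+) + \diam_Y(\pi_Y(T^+)) \;\le\; 0 + 1 = 1,
\]
and symmetrically $d_Y(e, \lambda^-) \le 1$. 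Hence $d_Y(\lambda^-,\lambda^+) \le 2$, contradicting the hypothesis $d_Y(\lambda^-,\lambda^+) > 2$. So $T^+$ and $T^-$ have no shared edges in the interior of $\int_\tau(Y)$; thus $\int_\tau(Y)$ is a full component of $X\setminus \pi(T^+\cap T^-)$ and is the base of a pocket $U_Y$ between $T^+$ and $T^-$. That $T^+$ lies above $T^-$ over $U_Y$ follows from \Cref{lem:slope_drop} together with the slope extremality of $T^\pm$. The pocket $U_Y$ is bounded above by $T^+|_{\int_\tau(Y)}$ and below by $T^-|_{\int_\tau(Y)}$, both of which are simplicial copies of $\int_\tau(Y)\cong \int(Y)$, so its interior is isotopic to the thickening $\int(Y)\times(0,1)$. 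For the annular case with $d_Y(\lambda^-,\lambda^+) > 4$, the same argument using $d_Y(T^\pm,\lambda^\pm) = 1$ gives $d_Y(\lambda^\pm,e) \le 2$, hence $d_Y(\lambda^-,\lambda^+) \le 4$, contradicting the hypothesis.

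The main obstacle is part $(1)$. Part $(2)$ is a clean application of the triangle inequality and the diameter bound of \Cref{lem:overlap_tau}, with the numerical thresholds $2$ and $4$ being forced by $d_Y(T^\pm,\lambda^\pm)$ plus the $+1$ diameter slack on each side. By contrast, part $(1)$ requires the geometric identification of an edge of $T^+$ with a leaf of $\lambda^+$, and this hinges on a delicate analysis of the uppermost edge's spanning rectangle and its crossings with $\partial_\tau Y$---one must verify both that the rectangle spills out of $\int_\tau(Y)$ vertically and that the resulting endpoints of the vertical leaf lie on the same pair of $\partial' Y$-components as those of $e$.
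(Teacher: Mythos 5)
Your part~(2) is fine and is essentially the paper's argument: by part~(1) and the diameter bound of \Cref{lem:overlap_tau}(1), a common edge of $T^+$ and $T^-$ inside $\int_\tau(Y)$ would force $d_Y(\lambda^-,\lambda^+)\le 2$ (resp.\ $\le 4$), so $T^\pm$ share no interior edges and bound a pocket over $\int_\tau(Y)$. The problem is part~(1), where your proposed mechanism does not work and where the paper's real work lies.

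First, the claim that the ``uppermost (maximal-slope)'' interior edge $e$ of $T^+$ has a maximal vertical rectangle spilling past $\partial_\tau Y$ on both sides is unjustified, and I do not see how to make it true. The vertical extension of the rectangle spanned by $e$ terminates at the first singularities it meets above and below, and since $X$ is fully-punctured these can perfectly well be punctures in the interior of $Y$; the new edge $f$ it produces (the top edge of the tetrahedron above $e$, with $\sl(f)>\sl(e)$) need not lie in $T^+$, so neither the maximality of slope among interior $T^+$-edges nor the extremality of $T^+$ (which only says no interior edge is upward flippable, i.e.\ widest in its quadrilateral --- a horizontal condition, not a vertical one) yields any contradiction. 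Second, even granting that the rectangle exits $Y$, a ``central vertical leaf'' of it crosses $e$ transversely once; both are $q$-geodesics, so lifting to the CAT(0) cover $X_Y$ shows the crossing is essential and the leaf is \emph{not} the same vertex of $\A(Y)$ as $e$. At best you would get $d_Y(T^+,\lambda^+)\le 1$ in the nonannular case, which is weaker than the claimed equality $=0$ and would degrade the thresholds in part~(2). (Your parenthetical ``after endpoint sliding on the $\partial' Y$-components'' also misfires: the endpoints of $e$ are punctures/ideal vertices, and endpoint sliding cannot remove an essential crossing.) You flagged both of these verifications as outstanding; they are exactly where the argument breaks.

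The paper's route is different and is worth internalizing: every section has an upward-flippable edge (its widest edge), and since $T^+$ is the top of $T(\partial_\tau Y)$, any upward-flippable edge of $T^+$ must lie in $\partial_\tau Y$. Flipping such an edge $e\subset\partial_\tau Y$ produces $f>e$ crossing $\partial_\tau Y$, and \Cref{near lambda} --- whose proof is the substantive step, requiring one to pass to $\widetilde X$ because the spanning rectangle of $f$ is only immersed in $X$, and to show the polygon $B=Q\cap\int_\tau(Y)$ embeds --- gives $d_Y(f,\lambda^+)=0$ (resp.\ $\le 1$ for annuli, with $\ne 0$ because no leaf has both ends at completion points). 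Finally $f$ is isotopic in $Y$ to a side $e'$ of the flip quadrilateral, which \emph{is} an edge of $T^+$, giving $d_Y(T^+,\lambda^+)=0$. So the missing ingredients in your proposal are precisely the boundary-flip observation and a statement of the strength of \Cref{near lambda}; without them part~(1), and hence the numerical thresholds in part~(2), are not established.
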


\begin{proof}
Begin with the following lemma:

\begin{lemma}\label{near lambda}
Suppose that $Y \subset X$
is $\tau$-compatible,
let $e$ be an edge of $\boundary_\tau Y$ and let $f$ be a $\tau$-edge
crossing $e$ with $f>e$. Then $d_Y(f,\lambda^+) \le 1$ if $Y$ is an annulus and $d_Y(f,\lambda^+) =0$ otherwise. Similarly if
$f<e$ then the same statement holds for $d_Y(f,\lambda^-)$.
\end{lemma}

\realfig{f-above-e_2}{Local picture near the $\tau$-edge $e$ of  $\partial_\tau Y$ with $\int_\tau(Y) \subset X$ shaded. When $f>e$, the edge $l^+$ of $Q$ represents
  $\pi_Y(\lambda^+)$ and is disjoint from $f$. Note that $Q$ is \emph{immersed} in $X$.}

The key idea of the proof is pictured in \Cref{f-above-e_2}. Here it is shown that if $f$ crosses $e \subset \partial_\tau Y$ with $f>e$, then some component of the intersection of $f$ with $\int_\tau(Y)$ is disjoint from some arc in $\pi_Y(\lambda^+)$. However, the spanning rectangle $Q$ for $f$ is immersed in $X$ (rather than necessarily embedded).
To handle this issue, we work in the cover $\widetilde X$.

\begin{proof}
Let $C^{\mathrm{o}}$ be a component of the preimage of $\int_\tau(Y)$ under $\widetilde X \to X$ and choose a saddle connection $\til e$ in the boundary of $C^{\mathrm{o}}$ which projects to $e$. 
Further, let $\til f$ be any lift of $f$ which crosses $\til e$.
Since $f$ is a $\tau$-edge, $\til f$ spans a singularity-free rectangle $\til Q$ whose immersed image in $X$ we denote by $Q$. 

Every $\tau$-edge which crosses $\til Q$ does so either
top to bottom or side to side.
Since $f>e$, $\til e$ must cross $\til Q$ from side to side (see
\Cref{sections}). 
Since all $\tau$-edges in $\boundary C^{\mathrm{o}}$ are disjoint, they all must
cross $\til Q$ from side to side. 

Since $\int_\tau(Y)$ is in minimal position with $\lambda^+$
(\Cref{lem:overlap_tau}), $C^{\mathrm{o}}$ intersects each leaf of the vertical 
foliation in a connected set. 
Together these observations
imply that $\til Q \intersect C^{\mathrm{o}}$ is a single polygon $\til B$,
bounded by at least one edge crossing $\til Q$ from side to side 
(which we have called $\til e)$. See \Cref{f-above-e_cover}.



\begin{figure}[htbp]
\begin{center}
\includegraphics[scale = .7]{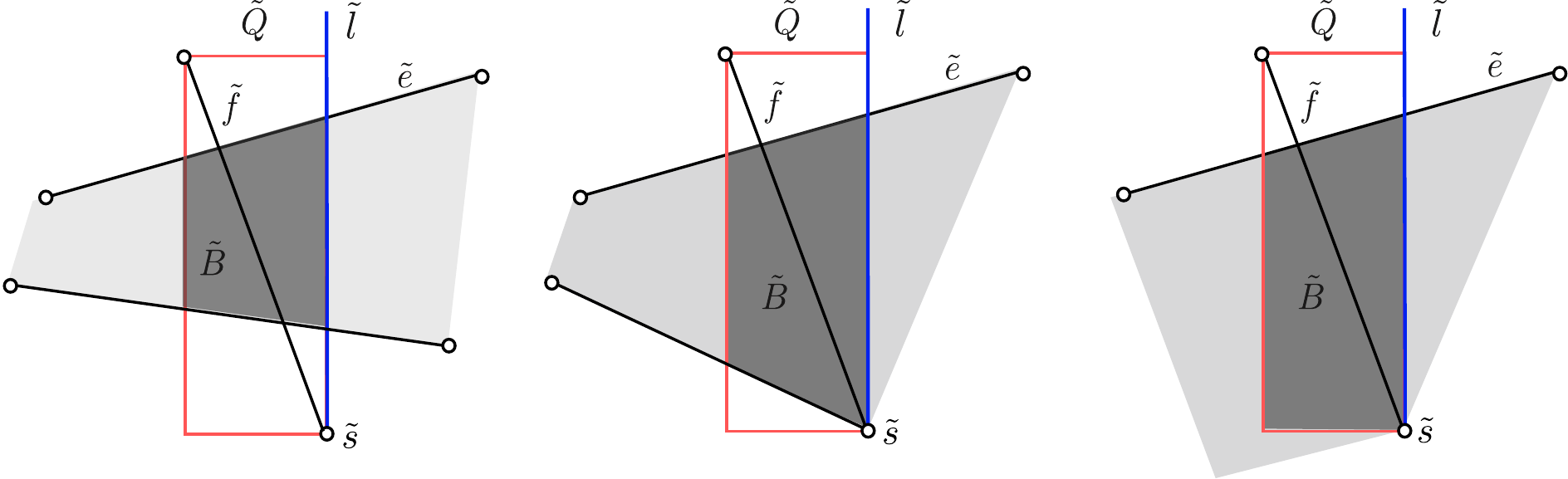}
\caption{The 3 possibilities for $\til B$. The lightly shaded region is part of $C^{\mathrm{o}}$ in $\widetilde X$.}
\label{f-above-e_cover}
\end{center}
\end{figure}

\begin{claim*}
$\til B$ embeds in $\int_\tau (Y)$ under the covering $\til X \to X$.
\end{claim*}

\begin{proof}[Proof of claim]
Since $\til B \subset C^{\mathrm{o}}$, the image of $\til B$ is contained in $\int_\tau (Y)$.
Suppose that $x,y \in \til B$ map to the same point in $\int_\tau(Y)$, and denote by $l_x$ and $l_y$ the vertical leaf segments in $\til X$ starting at $x$ and $y$, respectively, and continuing to $\til e$. Since $\til B$ is convex, 
$l_x,l_y \subset \til B$. Suppose that $l_x$ is no longer than $l_y$ and let $l_y'$ be the subsegment of $l_y$ with length equal to that of $l_x$. Then $l_x$ and $l_y'$ are identified under the map $\til X \to X$. But the identification of $ \partial l_x \ssm \{x\} \subset \til e$ and $ \partial l'_y \ssm \{y\} \subset \til B \cup \til e \subset C^{\mathrm{o}} \cup \til e$ gives a contradiction, unless $x=y$: the edge $\til e$ is mapped injectively into $X$ with image $e\subset \partial_\tau Y$ disjoint from the image of $C^{\mathrm{o}}$, which is $\int_\tau (Y)$.
\end{proof}

Let $\til s$ be the vertex of $\til f$ which
is on the same side of $\til e$ as $\til B$.
Let $\til l$ be the vertical side of $\til Q$ starting at $\til s$. 

Let $B$ be the image of $\til B$ in  $X$. By the claim, $B$ is a
singularity-free quadrilateral in $X$ whose interior is contained in
$\int_\tau(Y)$. The images in $X$ of $\til f \intersect \til B$ and $\til l
\intersect \til B$ are therefore disjoint proper arcs in
$\int_\tau(Y)$, which by 
\Cref{lem:overlap_tau} are representatives of
$\pi_Y(f)$ and $\pi_Y(\lambda^+)$, respectively. Moreover, these arcs are properly 
homotopic in $\int_\tau(Y)$ by a homotopy supported in $B$.

Hence, when $Y$ is nonannular, we conclude that  $d_Y(f,\lambda^+) =0$.  
If $Y$ is an annulus, we project the picture to the annular cover
$X_Y$, where
 we note that the image $l$ of $\til l$, continued to infinity, cannot intersect
 $f$ without meeting $Q$, and hence $e$, again. Since $l$ can only
 meet $\boundary_q Y$ once in the annular cover, we conclude it is
 disjoint from $f$ and so
$d_Y(f,\lambda^+) =1$.

 The case $f<e$ is similar, so \Cref{near lambda} is proved.
\end{proof}

%
%

We return to the proof of \Cref{Y pocket}.
Let $Y$ be nonannular.
Note that by definition the only
upward-flippable edges in $T^+$ must lie in $\boundary_\tau Y$. Let
$e$ be such an edge and consider the single flip move that replaces
$e$ with an edge $f$. Then $f>e$, so by \Cref{near lambda},
$d_Y(f,\lambda^+) = 0$. On the other hand $f$ and $e$ are diagonals
of a quadrilateral made of edges of $T^+$, at least one of which, $e'$,
gives the same element of $\A(Y)$ as $f$. Hence 
$d_Y(T^+,\lambda^+) = 0.$

If $Y$ is an annulus, we note that $e'$ and the vertical leaf in the
proof of \Cref{near lambda} give adjacent vertices of $\A(Y)$, so 
$d_Y(T^+,\lambda^+)\le 1$. Note that $d_Y(T^+,\lambda^+) \ne 0$ because no 
leaf of the foliation $\lambda^+$ has both its endpoints terminating at
completion points.

To prove the statements about pockets, let $K$ be the common edges of  
$T^+$ and  $T^-$,
viewed as a subcomplex of $X$.
If $\int_\tau(Y)$ contains an edge of $K$ then from the triangle inequality,
 together with the first part of the proposition, 
we obtain $d_Y(\lambda^+,\lambda^-) \le 2$ when $Y$ is
nonannular, and 
$d_Y(\lambda^+,\lambda^-) \le 4$ when $Y$ is an annulus. By our
hypotheses this does not happen, so we conclude that 
$T^+,T^- \in T(\partial_\tau Y)$ have no common edges contained
in $\int_\tau (Y)$. Hence $T^+$ and $T^-$ bound a pocket $U_Y$ whose base is
$\int_\tau(Y)$. This completes the proof.
\end{proof}

\subsection{Isolated pockets and projection bounds}

Let $X$ be a fiber in $\R_+\FF$, and 
let $Y$ be a $\tau$--compatible subsurface of $X$ 
such that $d_Y(\lambda^-,\lambda^+)>4$.
An \emph{isolated pocket} for $Y$ in $(X \times \mathbb{R}, \tau)$ is a subpocket $V = V_Y$ of $U_Y$ with base $\int_\tau (Y)$ such that 
\begin{enumerate}
\item For each edge $e$ of $V$ which is not contained in  $\partial_\tau Y$,
\[
d_Y(e,\lambda^+) \ge 3 \quad \text{and} \quad d_Y(e,\lambda^-) \ge 3
\]
if $Y$ is nonannular, and 
\[
d_Y(e,\lambda^+) \ge 4 \quad \text{and} \quad d_Y(e,\lambda^-) \ge 4
\]
if $Y$ is an annulus.
\item Denoting by $V^\pm$ the top and bottom of $V$ with their induced triangulations, 
\[
d_Y(V^-, V^+) \ge 1.
\]
\end{enumerate}

Note that condition $(2)$ guarantees that $\mathrm{int}(V_Y) \cong \int_\tau(Y) \times (0,1)$ is still a pocket just as in \Cref{Y pocket}. The next lemma shows that for $Y$ with $d_Y(\lambda^-,\lambda^+)$ sufficiently large, $Y$ has an isolated pocket with $d_Y(V^-,V^+)$ roughly $d_Y(\lambda^-,\lambda^+)$.

\begin{lemma} \label{lem:iso_pocket}
Suppose that $Y$ is a nonannular subsurface of $X$ with
$d_Y(\lambda^-,\lambda^+) > 8$. Then $Y$ has an isolated pocket $V$
with $d_Y(V^-,V^+) \ge d_Y(\lambda^-,\lambda^+) - 8$.

If $Y$ is an annulus with
$d_Y(\lambda^-,\lambda^+) > 10$, then $Y$ has an isolated pocket $V$
with $d_Y(V^-,V^+) \ge d_Y(\lambda^-,\lambda^+) - 10$.
\end{lemma}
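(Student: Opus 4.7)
The approach is to construct the isolated pocket $V$ by shrinking the maximal pocket $U_Y$ of \Cref{Y pocket} from above and below. Let $T^+$ and $T^-$ be the top and bottom of $U_Y$, so $d_Y(T^\pm,\lambda^\pm) = 0$ by \Cref{Y pocket}. I treat the nonannular case in detail; the annular case proceeds identically with $3$ replaced by $4$ and $8$ replaced by $10$. Call a $\tau$-edge $e$ of $U_Y$ not contained in $\partial_\tau Y$ \emph{top-close} if $d_Y(e,\lambda^+) < 3$ and \emph{bottom-close} if $d_Y(e,\lambda^-) < 3$. By the triangle inequality in $\A(Y)$ together with $d_Y(\lambda^-,\lambda^+) > 8$, no such edge is simultaneously top-close and bottom-close.

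The plan is to construct $V^+$ by iteratively flipping $T^+$ downward through tetrahedra whose top edge is top-close, terminating at a section $V^+ \in T(\partial_\tau Y)$ with no top-close interior edges. Because the peeling only modifies non-boundary edges, it stays in $T(\partial_\tau Y)$, so $T^- \leq V^+ \leq T^+$ by \Cref{cor:top}. Symmetrically, I construct $V^-$ by upward flips on $T^-$ through tetrahedra whose bottom edge is bottom-close. I will then argue that $V^+$ lies above $V^-$ in $U_Y$, that $V$ is a genuine sub-pocket of $U_Y$, and that every interior edge of $V$ is neither top- nor bottom-close (yielding condition (1) of isolated pocket), all relying on disjointness of the top-close and bottom-close conditions together with the observation that any edge introduced during the $V^+$ peeling has $d_Y(\cdot,\lambda^-) > d_Y(\lambda^-,\lambda^+) - 4 > 4$ from $\lambda^-$ by the triangle inequality, so the two peelings cannot interfere. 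I expect the \textbf{main obstacle} to be verifying the peeling step: namely, that any section in $T(\partial_\tau Y)$ containing a top-close interior edge also contains such an edge that is downward-flippable. The strategy is to consider the top-close edge $e$ of maximal $q$-height in the section; if $e$ were not downward-flippable, one of its two adjacent triangles would contain an edge $e^*$ of strictly greater $q$-height, and a rectangle argument in the spirit of \Cref{near lambda} together with the slope comparisons of \Cref{veering defs} should force $e^*$ to also be top-close, contradicting the maximal choice.

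For the distance estimate, the four side edges of the last tetrahedron flipped in constructing $V^+$ survive in $V^+$ and are disjoint in $X$ from the last removed top-close edge $e$; hence each such side edge $s$ satisfies $d_Y(s,\lambda^+) \leq d_Y(s,e) + d_Y(e,\lambda^+) \leq 1 + 2 = 3$. This gives $d_Y(V^+,\lambda^+) \leq 3$, and symmetrically $d_Y(V^-,\lambda^-) \leq 3$. The projections $\pi_Y(V^\pm)$ each have diameter at most $1$ in $\A(Y)$ since $V^\pm$ restrict to triangulations of a region containing $Y$. The triangle inequality then gives
$$d_Y(\lambda^-,\lambda^+) \leq d_Y(\lambda^+,V^+) + 1 + d_Y(V^+,V^-) + 1 + d_Y(V^-,\lambda^-) \leq 8 + d_Y(V^+,V^-),$$
so $d_Y(V^-,V^+) \geq d_Y(\lambda^-,\lambda^+) - 8$, establishing the main bound; in particular $d_Y(V^-,V^+) \geq 1$ since $d_Y(\lambda^-,\lambda^+) > 8$, giving condition (2).
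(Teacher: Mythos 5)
Your overall shape (find a subpocket bounded by two sections sitting "in the middle" of $U_Y$, then run the triangle inequality) is the right one, and your final counting matches the paper's. But the construction you propose rests on a step you yourself flag and do not prove, and your sketch for it does not work. You need that any section in $T(\partial_\tau Y)$ containing a top-close interior edge contains one that is downward flippable. Your plan is to take the top-close edge $e$ of maximal $q$-height and, if it is not flippable, find a strictly taller edge $e^*$ in an adjacent triangle and argue $e^*$ is also top-close. However, $e^*$ shares a triangle with $e$, so it is \emph{disjoint} from $e$ rather than crossing it; the slope comparison of \Cref{veering defs} and the rectangle argument of \Cref{near lambda} concern crossing edges and give no information here. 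Disjointness only yields $d_Y(e^*,\lambda^+)\le 1+d_Y(e,\lambda^+)\le 3$, which does not contradict maximality since "top-close" means $<3$; moreover $e^*$ may lie in $\partial_\tau Y$ or fail to meet $\int_\tau(Y)$ essentially, in which case it is not eligible to be top-close at all. Separately, even granting that both peelings terminate and that $V^-$ lies below $V^+$, condition (1) of an isolated pocket must hold for \emph{every} edge of $V$ not in $\partial_\tau Y$, including edges strictly between $V^-$ and $V^+$ that appear on neither section; your proposal only asserts this, and the hint you give (control of edges introduced during the peeling) does not address edges in the interior of the final subpocket.

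The paper's proof sidesteps your "main obstacle" entirely, and you could repair your argument the same way. By \Cref{prop:connect} choose a monotone flip sequence $T^-=T_0\to T_1\to\cdots\to T_N=T^+$ in $T(\partial_\tau Y)$; by \Cref{Y pocket}, $d_Y(T^-,\lambda^-)\le 1$ and $d_Y(T^+,\lambda^+)\le 1$. With $c=3$ (nonannular) or $c=4$ (annular), let $a$ be the largest index with $d_Y(T_{a-1},\lambda^-)<c$ and $b$ the smallest index greater than $a$ with $d_Y(T_{b+1},\lambda^+)<c$; then $d_Y(T_i,\lambda^\pm)\ge c$ for all $a\le i\le b$. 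Taking $V$ to be the pocket between $T_a$ and $T_b$ over $\int_\tau(Y)$, every edge of $V$ not in $\partial_\tau Y$ lies on some $T_i$ with $a\le i\le b$ (the subpocket is exactly swept by these sections), so condition (1) is automatic, and the triangle inequality with the diameter-$1$ bound on triangulations gives $d_Y(V^-,V^+)=d_Y(T_a,T_b)\ge d_Y(\lambda^-,\lambda^+)-2c-2\ge 1$, which is both condition (2) and the stated estimate. In short: no local flippability criterion is needed, only the existence of a sweep and the selection of a sub-interval of its sections by their distances to $\lambda^\pm$.
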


\begin{proof}
  Let $c=4$ if $Y$ is an annulus and $c=3$ otherwise, and assume that
  $d_Y(\lambda^+,\lambda^-) > 2c+2$.
Since the pocket $U = U_Y$ is connected (\Cref{prop:connect}), there is a sequence of
sections  $T^- = T_0,  T_1, \ldots, T_N = T^+$ in $T(\partial_\tau Y)$
such that $T_{i+1}$ differs from $T_i$
by an upward diagonal exchange. From \Cref{Y pocket}, we know that
$d_Y(T^- , \lambda^-) \le 1$ and $d_Y(T^+ , \lambda^+) \le 1$. Let $0
< a < N$ be largest integer such that  
$d_Y(T_{a-1}, \lambda^-) < c$; hence $d_Y(T_{i}, \lambda^-) \ge c$
for all $i\ge a$. Now let $b<N$ be the smallest integer greater than
$a$ such that
$d_Y(T_{b+1},\lambda^+) < c$; then $d_Y(T_{i},\lambda^+) \ge c$ for
all $a\le i \le b$.

Note that these indices exist since $d_Y(\lambda^-,\lambda^+) \ge 2c+1$.

Now let $V$ be the pocket between $T_a$ and $T_b$ with base contained in
$\int_\tau(Y)$ and note that $V$ is a subpocket of $U$. 
Any edge $e$ of $V$ not contained in $\boundary_\tau Y$ is
contained in a section
$T_i \in T(\partial_\tau Y)$ for $a \le i \le b$. Since we have
$d_Y(T_i,\lambda^\pm)\ge c$, we have $d_Y(e,\lambda^\pm)\ge c$. Thus it
only remains to get a lower bound on $d_Y(V^+,V^-)$.

The triangle
inequality (and diameter bound on $T_a$ and $T_b$) gives
\[
d_Y (V^-,V^+) = d_Y(T_a,T_b) \ge d_Y(\lambda^-,\lambda^+) - 2c -2 \ge 1.
\]
This implies that $\int_\tau (Y)$ is the base of $V$ and completes the proof. 
\end{proof}

The following proposition shows that isolated pockets coming from either disjoint or overlapping subsurfaces of $X$ have interiors which do not meet.

\begin{proposition}[Disjoint pockets] \label{prop:disjoint_pockets}
Suppose that $Y$ and $Z$ are subsurfaces of $X$ with isolated pockets $V_Y$ and $V_Z$. Then, up to switching $Y$ and $Z$, either $Y$ is nested in $Z$, or the isolated pockets $V_Y$ and $V_Z$ have disjoint interiors in $X \times \mathbb R$.
\end{proposition}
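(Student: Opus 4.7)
The plan is to split into three cases depending on the relative position of $Y$ and $Z$: nested, disjoint, or overlapping. The nested case is allowed by the ``up to switching'' clause in the statement, so I focus on the other two.

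For the disjoint case I would argue as follows. By \Cref{lem:overlap_tau}(2), $\int_\tau(Y)\cap\int_\tau(Z)=\emptyset$. Since each pocket projects into the closure of its base, $\pi(V_Y)\subseteq\overline{\int_\tau(Y)}$ and $\pi(V_Z)\subseteq\overline{\int_\tau(Z)}$, so $\pi(V_Y\cap V_Z)\subseteq\partial_\tau Y\cap\partial_\tau Z$, which is a $1$-complex in $X$. Hence $V_Y\cap V_Z$ lies inside a $2$-complex of $X\times\R$ and has empty $3$-dimensional interior.

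For the overlap case I will argue by contradiction. Suppose $V_Y\cap V_Z$ has nonempty interior, so there is a common tetrahedron $\Delta\subset V_Y\cap V_Z$. Let $R=\pi(\Delta)$ be the maximal singularity-free rectangle associated to $\Delta$. Because $\pi(V_Y)\subseteq\overline{\int_\tau(Y)}$ and $\partial_\tau Y$ is only $1$-dimensional, the open rectangle must satisfy $\int R\subset\int_\tau(Y)$, and similarly $\int R\subset\int_\tau(Z)$. Each of the six $\tau$-edges of $\Delta$ projects to a straight saddle connection joining two of the four boundary singularities on $\partial R$, with interior contained in $\int R$. Hence no edge of $\Delta$ is contained in $\partial_\tau Y\cup\partial_\tau Z$, and the isolated pocket conditions applied to both pockets yield, for every edge $e$ of $\Delta$,
\[
d_Y(e,\lambda^\pm)\ge c \qquad\text{and}\qquad d_Z(e,\lambda^\pm)\ge c,
\]
where $c=3$ (nonannular) or $c=4$ (annular).

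The hard part will be turning these simultaneous bounds into a contradiction; this is where the overlap hypothesis must enter. Since the interior of $e$ is disjoint in $X$ from both $\partial_\tau Y$ and $\partial_\tau Z$, their lifts to $X_Y$ and $X_Z$ remain disjoint from the corresponding lifts of $e$, which gives $d_Y(e,\partial_\tau Z)\le 1$ and $d_Z(e,\partial_\tau Y)\le 1$. Combining these with $\diam_Y(\partial Z\cup\partial_\tau Z)\le 1$ from \Cref{lem:overlap_tau}(3) (and its analogue in $Z$) produces $d_Y(e,\partial Z)\le 2$ and $d_Z(e,\partial Y)\le 2$. I anticipate closing the argument with a Behrstock-style inequality for the overlapping pair $(Y,Z)$ applied to the filling lamination $\lambda^+$, which forces one of $d_Y(\partial Z,\lambda^+)$ or $d_Z(\partial Y,\lambda^+)$ to be bounded by a small universal constant; a triangle-inequality chase then contradicts $d_Y(e,\lambda^+)\ge c$ (or its $Z$-analogue). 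If the generic Behrstock constant turns out to be too loose in the arc-complex setting, the intended sharpening uses the slope comparison relation on $\tau$-edges from \Cref{veering defs}: at every crossing of a saddle connection of $\partial_\tau Y$ with one of $\partial_\tau Z$ there is a definite ordering, and this extra veering combinatorics should tighten the bound enough to close the gap.
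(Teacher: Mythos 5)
Your disjoint case is fine and matches the paper, but the overlap case has a genuine gap precisely at the step you flag as ``the hard part.'' The Behrstock-style inequality cannot close the argument numerically: even in its sharpest forms its constant is at least $3$--$4$ (and the standard elementary proof gives $10$), so the chain $d_Y(e,\lambda^+)\le d_Y(e,\partial Z)+\diam_Y(\partial Z)+d_Y(\partial Z,\lambda^+)$ only yields something like $d_Y(e,\lambda^+)\le 8$ or worse, which does not contradict the isolated-pocket thresholds $d_Y(e,\lambda^\pm)\ge 3$ (nonannular) or $\ge 4$ (annular). Moreover Behrstock only bounds the projection in \emph{one} of the two overlapping surfaces, with no control over which, and your simultaneous bounds in $Y$ and $Z$ do not repair the numerical shortfall. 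The ``veering sharpening'' you gesture at in the last sentence is exactly the missing content, and it is not a routine tightening: it is \Cref{near lambda}, which says that a $\tau$-edge $f$ crossing an edge $e$ of $\partial_\tau Y$ with $f>e$ satisfies $d_Y(f,\lambda^+)=0$ (resp.\ $\le 1$ in the annular case), and symmetrically for $f<e$ and $\lambda^-$.

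The paper's route through the overlap case is: (i) show there exists a $\tau$-edge $f$ contained in $\int_\tau(Z)$ that crosses $\partial_\tau Y$ --- this itself requires an argument, namely that otherwise every $\tau$-triangulation of $\int_\tau(Z)$, in particular $T^\pm(\partial_\tau Z)$, would contain edges of $\partial_\tau Y$, forcing $d_Z(\lambda^-,\lambda^+)\le 2+\diam_Z(\partial_\tau Y)\le 3$ via \Cref{Y pocket} and \Cref{lem:overlap_tau}, contradicting the distance hypothesis implicit in $Z$ having an isolated pocket; (ii) apply \Cref{near lambda} to that crossing to conclude $d_Y(\partial_\tau Z,\lambda^+)\le 1$ or $d_Y(\partial_\tau Z,\lambda^-)\le 1$ (note this bound lives in the fixed surface $Y$, with only the sign $\pm$ undetermined --- strictly stronger than what Behrstock provides); (iii) observe that any edge $e$ of $U_Y\cap U_Z$ not lying in $\partial_\tau Y\cup\partial_\tau Z$ is disjoint from $\partial_\tau Z$, hence $d_Y(e,\lambda^+)\le 2$ or $d_Y(e,\lambda^-)\le 2$ (with $3$ in the annular case), which violates the isolated-pocket condition for $V_Y$; hence $V_Y\cap V_Z\subset\partial_\tau Y\cup\partial_\tau Z$. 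Your reduction to a common tetrahedron $\Delta$ and the bounds $d_Y(e,\partial Z)\le 2$, $d_Z(e,\partial Y)\le 2$ are correct as far as they go, but without steps (i) and (ii) --- the existence of the crossing edge inside $\int_\tau(Z)$ and the sharp distance-$\le 1$ estimate of \Cref{near lambda} --- the contradiction is not reached, so as written the proof is incomplete.
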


\begin{proof} 
If the subsurfaces $Y$ and $Z$ are disjoint, then $\int_\tau(Y)$ and $\int_\tau(Z)$ are also disjoint by \Cref{lem:overlap_tau}. Hence, the maximal pockets $U_Y$ and $U_Z$ have disjoint interiors by definition. 

Now suppose that $Y$ is not an annulus.
We claim that if $Y$ and $Z$ overlap then either
\[
d_Y(\partial_\tau Z, \lambda^+)\le 1 \; \text{ or } \; d_Y(\partial_\tau Z,\lambda^-) \le 1.
\]
To see this, first note that there is some edge $f$ 
contained in $\int_\tau(Z)$
such that $f$ crosses some edges of $\partial_\tau Y$. 
Otherwise, every triangulation of $\int_\tau(Z)$ by $\tau$--edges
would contain edges from $\partial_\tau Y$. But then applying this to
$T^\pm(\partial_\tau Z)$ and using \Cref{Y pocket}, 
we would have that
\[
d_Z(\lambda^-,\lambda^+) \le 2 + \diam_Z(\partial_\tau Y) \le 3, 
\]
contradicting our assumption on the subsurface $Z$.
Now if $f$ intersects an edge $e$ of $\partial_\tau Y$ and $f>e$, then
by \Cref{near lambda}, $d_Y(\partial_\tau Z, \lambda^+)\le
d_Y(\partial_\tau Z, f) \le 1$. If $f<e$ then \Cref{near lambda} gives
$d_Y(\partial_\tau Z, \lambda^-)\le d_Y(\partial_\tau Z, f) \le 1$. 

Now suppose that $e$ is an edge of $U_Y \cap U_Z$ which is not
contained in $\partial_\tau Y \cup \partial_\tau Z$. Then $e$, as a
$\tau$-edge in $X$, is disjoint from $\partial_\tau Z$ and so
$d_Y(e,\lambda^+) \le 2$ or
$d_Y(e,\lambda^-) \le 2$.
Hence $e$ cannot be contained in $V_Y$.
We conclude that $V_Y \cap V_Z \subset \partial_\tau Y \cup
\partial_\tau Z$. This completes the proof when $Y$ is not an
annulus. 

When $Y$ is an annulus, then a similar argument using the annular case of \Cref{near lambda} shows that if $Y$ and $Z$ overlap then either
\[
d_Y(\partial_\tau Z, \lambda^+)\le 2 \; \text{ or } \; d_Y(\partial_\tau Z,\lambda^-) \le 2.
\]
Hence, if $e$ is an edge of $U_Y \cap U_Z$ which is not
contained in $\partial_\tau Y \cup \partial_\tau Z$, then
$d_Y(e,\lambda^\pm) \le 3$.  So again $e$ cannot be contained in $V_Y$ and we conclude that $V_Y \cap V_Z \subset \partial_\tau Y \cup \partial_\tau Z$ as required.
\end{proof}

We next prove that isolated pockets embed into the fibered manifold
$M$. This is \Cref{thm:pocket summary}, which we restate here in more
precise language. 

\restate{thm:pocket summary}{
{\rm (Embedding the pocket).}
  Suppose $Y$ is a subsurface of a fully-punctured fiber $X$ with
  $d_Y(\lambda^-,\lambda^+) > \beta$, where $\beta=8$ if $Y$ is
  nonannular and $\beta=10$ if $Y$ is an annulus.
Then $Y$ has an isolated pocket $V_Y$ in $X \times \R$, and the
covering map $X \times \R \to M$ restricts to an embedding of the
subcomplex $V_Y \to M$. 
}

\begin{proof}
Let $\Phi$ be the simplicial isomorphism of $X \times \R$ induced by $f$ as in \Cref{gueritaud construction}.
Note that if $T$ is a section of $\tau$, then $\Phi (T)$ is the section of $\tau$ whose corresponding triangulation of $X$ is $f(T)$. 
Hence, $\Phi(T(\partial_\tau Y)) = T(\partial_\tau{f (Y)})$.

By \Cref{lem:iso_pocket}, $Y$ has an 
isolated pocket $V =V_Y$. Note that $V$ embeds into $M$ if and only if it is disjoint from its translates $V_i = \Phi^i(V)$ for each $i \neq 0$. By the remark above, each $V_i$ is itself an isolated pocket for the subsurface $Y_i = f^i(Y)$, and any two of these subsurfaces are either disjoint or overlap in $X$. Hence, by \Cref{prop:disjoint_pockets} the isolated pockets $V_i$ are disjoint as required. 
\end{proof}

We will now prove \Cref{th:bounding projections}, whose
statement we recall here:

\restate{th:bounding projections}{
Let $M$ be a hyperbolic 3-manifold with fully-punctured fibered face
$\FF$ and veering triangulation $\tau$.
For any subsurface $W$ of any fiber of $\FF$,
\[
\alpha \cdot (d_W(\lambda^- ,\lambda^+) -\beta) < |\tau|, 
\]
where $|\tau|$ is the number of tetrahedra in $\tau$, 
$\alpha = 1$ and $\beta = 10$ when $W$ is an annulus and
$\alpha = 3|\chi(W)|$ and $\beta = 8$ when $W$ is not an annulus.
}

\begin{proof}
Suppose that $W$ is any nonannular subsurface of any fiber $F$ in $\R_+  \FF$.
We may assume that $d_W(\lambda^-,\lambda^+) >8$. 
Then \Cref{lem:iso_pocket} implies that $W$ has an isolated pocket $V_W$ in $(F \times \R, \tau)$ such that $d_W(V_W^-,V_W^+)\ge d_Y(\lambda^-,\lambda^+) -8$.
By \Cref{thm:pocket summary}, the isolated pocket $V_W \subset (F \times
\R, \tau)$ embeds into $(M,\tau)$. Hence
$|V_W| \le |\tau|$, where $|V_W|$ denotes the number of
tetrahedra of $V_W$. 
Now each tetrahedron of $V_W$ corresponds to a diagonal exchange between
the triangulations $V_W^-$ and $V_W^+$ of $W_\tau$ and each diagonal
exchange replaces a single edge of the triangulation. 
There are at least $3|\chi(W)| + 1$ non-boundary edges to each triangulation of $W$,
and the diameter in $\A(W)$ of an ideal triangulation is 1, so we conclude
\begin{align} \label{ineq:pocket_growth}
|\tau| &\ge |V_W| = \#\{\text{diagonal exchanges from } V_W^- \text{ to } V_W^+\}\\
&> 3|\chi(W)| \cdot d_W(V^-,V^+) \nonumber \\
&\ge 3|\chi(W)| \cdot (d_W(\lambda^-,\lambda^+) - 8) \nonumber.
\end{align}
This completes the proof when $W$ is nonannular. 

When $W$ is an annulus, we use the annular case of
\Cref{lem:iso_pocket} to obtain an isolated pocket $V_W$ in $(F \times
\R, \tau)$ such that $d_W(V_W^-,V_W^+)\ge d_Y(\lambda^-,\lambda^+)
-10$. Noting that a triangulation of the annulus contains at least 2
(non-boundary) edges, the same argument implies that  
\begin{align*}
|\tau| &\ge |V_W| = \#\{\text{diagonal exchanges from } V_W^- \text{ to } V_W^+\}\\
&> d_W(V^-,V^+) \nonumber \\
&\ge d_W(\lambda^-,\lambda^+) - 10 \nonumber, 
\end{align*}
as required.
\end{proof}

\subsection{Sweeping through embedded pockets}
We are now ready to prove \Cref{th:sub_dichotomy_fully_punctured},
whose statement we reproduce below. 
This theorem relates subsurfaces of large projections among different fibers of a
fixed face. 

\restate{th:sub_dichotomy_fully_punctured}{
Let $M$ be a hyperbolic 3-manifold with fully-punctured fibered face
$\FF$ and suppose that $S$ and $F$ are each fibers in $\R_+\FF$.
If $W$ is a subsurface of $F$, then either $W$ is isotopic
along the flow to a subsurface of $S$, or
$$3|\chi(S)| \ge d_W(\lambda^-,\lambda^+) -\beta,$$
where $\beta =10$ if $W$ is an annulus and $\beta = 8$ otherwise.
}

Recall from \Cref{lem:subgroup_projection} that
we can identify $d_W(\lambda^+,\lambda^-)$ with
$d_W(\Lambda^+,\Lambda^-)$, agreeing with the statement given in the introduction.

We will require the following lemma, which essentially states that immersed subsurfaces with large projection are necessarily covers of subsurfaces. Recall that in \Cref{sec: arc_complex} we defined the distance $d_W(\lambda^+,\lambda^-)$ when $W$ is a compact core of a cover $X_\Gamma \to X$ corresponding to a finitely generated subgroup $\Gamma \le \pi_1(X)$.

\begin{lemma}[Immersion to cover] \label{lem:embedding_fullly_punctured}
Suppose that $(X,q)$ is a fully-punctured surface.
Let $\Gamma$ be a finitely generated subgroup of $\pi_1(X)$ and let $W$ be a compact core of the cover $X_\Gamma \to X$. 
If $W$ is nonannular and $d_W(\lambda^-,\lambda^+) > 4$ or 
if $W$ is an annulus and
$d_W(\lambda^-,\lambda^+) >6$,
then there is a subsurface $Y$ of $X$ such that 
$W \to X$ is homotopic to a finite cover $W \to Y \subset X$. 

In particular, $\Gamma$ is
a finite index subgroup of $\pi_1(Y)$.
\end{lemma}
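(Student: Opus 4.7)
The plan is to adapt the $\tau$-hull construction of \Cref{thm: tau-compatible} to the cover $X_\Gamma \to X$ and then verify that the induced immersion $W \to X$ factors through an embedded subsurface $Y$.

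First, the proofs of \Cref{prop: q_compatible} and \Cref{thm: tau-compatible} depend only on the limit set $\Lambda_\Gamma \subset \partial \HH^2$ of $\Gamma$ and on the flat-geometry framework of \Cref{same compactification}, so they apply verbatim to any finitely generated subgroup $\Gamma \le \pi_1(X)$ and not only to fundamental groups of embedded subsurfaces of $X$. Applied to $W$, they produce a map $\hat\iota_\tau \colon W \to \bar X_\Gamma$ that embeds $W' = W \ssm \partial_0 W$, sends $\partial_0 W$ to points of $\widetilde \PP_W$, and sends $\partial' W$ to a disjoint collection $\partial_\tau W$ of $\tau$-edges avoiding $\widetilde \PP_W$. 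The one clause of $\tau$-compatibility that does \emph{not} come for free is embeddedness of the projection $\iota_\tau \colon \int(W) \to X$, and producing such an embedded representative (which then automatically factors through a cover of an embedded subsurface) is essentially the content of the lemma. Let $\tilde W \subset \hat \HH^2$ be a lift of $\hat\iota_\tau(W')$, so $\tilde W / \Gamma \cong \int_\tau(W) \subset X_\Gamma$.

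Next I would set $G = \mathrm{Stab}_{\pi_1(X)}(\Lambda_\Gamma)$. Then $\Gamma \le G$, $G$ preserves $\tilde W$ setwise, and a $q$-area comparison---$\Gamma$ acts on $\tilde W$ cocompactly with fundamental domain of area $\mathrm{Area}_q(W) < \infty$, while $G$ is a discrete overgroup also acting on $\tilde W$---forces $[G : \Gamma] < \infty$. The candidate subsurface is $Y := \tilde W / G \subset X$, and the lemma reduces to showing $Y$ is embedded, i.e., that for every $g \in \pi_1(X) \ssm G$ the interiors $\int(\tilde W)$ and $\int(g \tilde W)$ are disjoint in $\hat \HH^2$. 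Granting this, $W \to X$ factors as $W \to Y \hookrightarrow X$ with $W \to Y$ a degree-$[G : \Gamma]$ cover, and $\Gamma$ is finite-index in $\pi_1(Y)$ as required.

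To establish disjointness, suppose for contradiction that $\int(\tilde W) \cap \int(g \tilde W) \neq \emptyset$ for some $g \notin G$. Then some boundary $\tau$-edge of $g \tilde W$ enters $\int(\tilde W)$, and its image $\delta$ in $X_\Gamma$ is a $\tau$-edge that essentially crosses $\int_\tau(W)$; the cover-level version of \Cref{lem:overlap_tau}(1), whose proof is local and carries over without change, gives $\pi_W(\delta) \neq \emptyset$ with $\diam_W(\pi_W(\delta)) \le 1$. Since $\delta$ must cross some edge $e$ of $\partial_\tau W$ in $X_\Gamma$, \Cref{near lambda} applied in $X_\Gamma$ bounds $d_W(\delta, \lambda^+) \le 1$ or $d_W(\delta, \lambda^-) \le 1$ in the nonannular case (respectively $\le 2$ in the annular case), according to whether $\delta > e$ or $\delta < e$. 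Convexity of both $\tilde W$ and $g \tilde W$ supplies a second boundary crossing of opposite slope, yielding the complementary bound by the same argument. Chaining these estimates with the $\diam \le 1$ bound on $\pi_W(\delta)$ via the triangle inequality gives $d_W(\lambda^-, \lambda^+) \le 4$ (respectively $\le 6$), contradicting the hypothesis. The main obstacle will be this slope-analysis step: verifying that any interior overlap of the convex $\tau$-hulls $\tilde W$ and $g \tilde W$ necessarily forces both a crossing with $f > e$ and one with $f < e$ among their boundary $\tau$-edges, so that \Cref{near lambda} controls both $d_W(\cdot, \lambda^+)$ and $d_W(\cdot, \lambda^-)$. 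This is the cover-setting analog of the key argument in \Cref{prop:disjoint_pockets}, and the constants $4$ and $6$ in the hypothesis are exactly what is needed to close the triangle-inequality chain $\lambda^+ \to \delta_1 \to \delta_2 \to \lambda^-$ once the two crossings are produced.
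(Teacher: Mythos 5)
Your first two steps (running the hull construction of \Cref{prop: q_compatible} and \Cref{thm: tau-compatible} in the cover $X_\Gamma$, and reducing the lemma to disjointness of $\mathrm{int}(\widetilde W)$ from its translates $g\widetilde W$, $g\notin G$) are sound, and the stabilizer/area reduction is a reasonable framing. The genuine gap is the contradiction step. Your mechanism needs an overlap of $\mathrm{int}(\widetilde W)$ and $\mathrm{int}(g\widetilde W)$ to force boundary crossings of \emph{both} slope types, so that \Cref{near lambda} controls both $d_W(\cdot,\lambda^+)$ and $d_W(\cdot,\lambda^-)$; but convexity does not supply this, and the claim is false in general. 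For example, let $\Gamma$ be cyclic, generated by a closed $q$-geodesic with a single self-intersection at which one strand is strictly more vertical than the other: then every crossing between edges of $\partial\widetilde W$ and of $\partial(g\widetilde W)$ is of one type. Deck transformations preserve slopes, so applying $g^{-1}$ only produces the opposite-type crossing between $\widetilde W$ and a \emph{different} translate $g^{-1}\widetilde W$; the two edges so obtained need not be close in $\A(W)$, so your triangle-inequality chain does not close. With only one slope type you merely get $d_W(\partial_\tau(gW),\lambda^+)\le 1$ (say), which contradicts nothing, no matter how large $d_W(\lambda^-,\lambda^+)$ is --- this is exactly why \Cref{prop:disjoint_pockets} needs the isolated-pocket hypotheses rather than a direct two-sided slope argument. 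You also do not treat the case where a boundary edge of $g\widetilde W$ lies entirely inside $\mathrm{int}(\widetilde W)$ without crossing $\partial_\tau W$ at all (your assertion that ``$\delta$ must cross some edge $e$ of $\partial_\tau W$'' fails there), nor the nested case; the contained-edge case is precisely where the threshold $4$ rather than $2$ is needed. (Minor: \Cref{near lambda} gives $d_Y(f,\lambda^\pm)=0$ in the nonannular case and $\le 1$ in the annular case, not $\le 1$ and $\le 2$.)

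The paper closes this gap by a different mechanism that your proposal lacks: it never compares two translates of the hull directly. Using LERF it passes to a finite cover $p\colon\check X\to X$ in which $W$ embeds, and then interposes \emph{sections of the base manifold} as mediators. By \Cref{gue-sweep} one can sweep $(X\times\R,\tau)$ and find a section $T$ with $d_W(T,\lambda^\pm)\ge 1$ (else two consecutive sections would force $d_W(\lambda^-,\lambda^+)\le 2$); \Cref{near lambda} then forces the lift $p^{-1}(T)$ to contain every edge of $\partial_\tau W$, and since $p^{-1}(T)$ is invariant under the deck group of $\check X\to X$ it contains all of $p^{-1}(p(\partial_\tau W))$. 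A section consists of disjoint edges, so no translate of $\partial_\tau W$ can cross $\partial_\tau W$; and a second sweep, producing sections within distance $1$ of $\lambda^-$ and of $\lambda^+$ whose lifts would both contain any translate edge lying in $\mathrm{int}_\tau(W)$, rules out the contained case using $d_W(\lambda^-,\lambda^+)>4$. If you want to salvage your stabilizer/area framing you would still need this section-mediated argument (or a genuine substitute) to establish the disjointness of translates; as written, the proposal does not prove the lemma.
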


\begin{proof}
Suppose that $d_W(\lambda^-,\lambda^+) > 4$ if $W$ is nonannular and 
$d_W(\lambda^-,\lambda^+) >6$ if $W$ is an annulus.
Let $p \colon \check X \to X$ be a finite cover to which $W \to X$
lifts to an embedding $W \to \check{X}$ (this exists since surface
groups are LERF \cite{scott-LERF}),
and identify $W$ with its
image in $\check{X}$. Lift $q$ along with the veering triangulation to
$(\check{X} \times \R,\tau)$. By \Cref{thm: tau-compatible},
$W$ is a $\tau$--compatible subsurface of $\check X$, and
by \Cref{thm: tau-compatible}
 and  \Cref{prop:connect}, 
$T_{\check{X}}(\partial_\tau W)$ is nonempty and connected. To prove the lemma, we
 show that $\int_\tau(W) \to X$ covers a subsurface of $X$. For this, it suffices to prove 
 that each edge of $p^{-1}(p(\partial_\tau W))$ is disjoint from $\int_\tau(W)$. Indeed, since
 $W$ is $\tau$--compatible, one component of $\check X \ssm \partial_\tau W$ is $\int_\tau(W)$. If $p^{-1}(p(\partial_\tau W))$ is disjoint from $\int_\tau(W)$, then $\int_\tau(W)$ is also a component of $\check X \ssm p^{-1}(p(\partial_\tau W))$. As components of $\check X \ssm p^{-1}(p(\partial_\tau W))$ cover components of $X \ssm p(\partial_\tau W)$, this will show that $\int_\tau(W) \to X$ covers a subsurface of $X$.

%

Hence, we must show that each edge of $p^{-1}(p(\partial_\tau W))$ is disjoint from $\int_\tau(W)$. This is equivalent to the statement that no edge of $p^{-1}(p(\partial_\tau W))$ crosses $\partial_\tau W$ nor is contained in $\int_\tau(W)$.

First suppose that $W$ is not an annulus.
If $\check T$ is a section
of $(\check{X} \times \R,\tau)$ with an edge $f$ such that $f>e$ for
an edge $e$ of $\partial_\tau W$,
then \Cref{near lambda} implies that
$d_W(\check T,\lambda^+) = 0$. Similarly if $f<e$ then  $d_W(\check T,\lambda^-) =
0$. Hence, if $T$ is \emph{any section of} $(X \times \R,\tau)$ such that
$d_W(T,\lambda^\pm) \ge 1$, then its lift
$\check{T} = p^{-1}(T)$ to $\check{X}$ must contain the edges of
$\partial_\tau W$ and so $\check{T} \in T_{\check{X}}(\partial_\tau
W)$.  Moreover, such a section $T$ of $(X \times \R,\tau)$ with $d_W(T,\lambda^\pm) \ge 1$ must exist. 
This is because
by \Cref{gue-sweep}, we may sweep through $X\times\R$ with sections going from
near $\lambda^-$ to near $\lambda^+$. If all sections were to have $d_W$--distance
$0$ from either $\lambda^-$ or $\lambda^+$, then there would be a pair $T,T'$ differing by
a single diagonal exchange such that $d_W(T, \lambda^-)= d_W(T',\lambda^+) = 0$. But this would imply that 
$d_W(\lambda^-,\lambda^+) \le 2$, contradicting our assumption on distance.

Putting these facts together, we conclude that 
 there exists a section $T$ of $(X \times \R,\tau)$ with $d_W(T,\lambda^\pm)\ge 1$, and that
for each such section 
\[
 p^{-1}(T) \in T_{\check{X}}(p^{-1}(p(\partial_\tau W))).
 \]
Note that this in particular implies that no edge of $p^{-1}(p(\partial_\tau W))$
crosses an edge of $\partial_\tau W$.
 

We claim now that no edge
$e$ in $p^{-1}(p(\partial_\tau W))$ can be
contained in $\int_\tau (W)$. Such an edge would have
a well-defined projection to
$\A(W)$  and 
would necessarily appear in each section of $T_{\check{X}}(p^{-1}(p(\partial_\tau W)))$ (by definition of $T_{\check X}(\cdot)$).
Using our conclusion from above, this would imply that
$d_W(p^{-1}(T),e) = 0$ whenever $d_W(T,\lambda^\pm)\ge 1$. 
But just as before, by sweeping through $X\times\R$ with sections going from
near $\lambda^-$ to near $\lambda^+$, 
we produce sections $T_1, T_2$ with $d_W(T_1, \lambda^-) =d_W(T_2,\lambda^+)=1$.
Since each of these sections' preimage in $\check X$ contains the edge $e$, we get that
$d_W(\lambda^\pm,e)\le 2$, which contradicts our hypothesis that
$d_W(\lambda^+,\lambda^-) > 4$.

This shows that no edge of $p^{-1}(p(\partial_\tau W))$ can meet $\int_\tau(W)$ and 
completes the proof when $W$ is nonannular.
When $W$ is an annulus, one proceeds exactly as above using the annular version of \Cref{near lambda}.
\end{proof}

\begin{proof}[Proof of \Cref{th:sub_dichotomy_fully_punctured}]
  We may assume that $W$ is a subsurface of $F$ such that
  $d_W(\lambda^-,\lambda^+) > \beta$. 

First suppose that $\pi_1(W)$ is contained in $\pi_1(S)$.
Then by \Cref{lem:embedding_fullly_punctured}, there is a subsurface
$Y$ of $S$ such that, up to conjugation in $\pi_1(S)$, $\pi_1(W) \le
\pi_1(Y)$ is a finite index subgroup; let
$n \ge 1$ denote this index. If $\eta_F \colon \pi_1(M) \to
\Z$ is the homomorphism representing the cohomology class dual to
$F$, then $\eta_F | \pi_1(Y)$ vanishes on the index $n$ subgroup
$\pi_1(W)$.  
Since $\Z$ is torsion-free we must have that $\eta$ vanishes on
$\pi_1(Y)$ and hence $\pi_1(Y)$ is contained in $\pi_1(F)$.
However, since the fundamental group of an embedded subsurface, in this case $W \subset F$, can not be nontrivially finite-index inside another subgroup of $\pi_1(F)$, we see that $n=1$ and $\pi_1(W) = \pi_1(Y)$.
That $W$ is isotopic along the flow in $M$ to $Y \subset S$ can be seen by lifting $W$ and $Y$ to the cover $S \times \mathbb{R} \to M$.


Hence, we may suppose by \Cref{lem:flow_to_fiber_2} that the image of
any $S \to M$ homotopic to the fiber $S$ intersects any isotope of  $W
\subset F$ essentially. Since $d_W(\lambda^-,\lambda^+) > \beta$, $W$
has a nonempty isolated pocket $V_W \subset F \times \mathbb R$
which simplicially embeds into $(M, \tau)$ by
\Cref{thm:pocket summary}. Let $\{W_i\}$ denote a sequence of sections of $V_W$ from $V^-_W$ to $V^+_W$ with $W_{i+1}$ differing from $W_i$ by an upward diagonal flip. Also, fix a simplicial map $f \colon S \to (M,\tau)$ which is obtained by composing a section of $(S \times \mathbb{R},\tau)$ with the covering map $S \times \mathbb{R} \to M$. 

Note that for each $i$, $f(S)$ meets at least one edge of the interior of $W_i$. Otherwise, the image of $S$ in $M$ misses the interior of $W_i$ contradicting our assumption. In fact, even more is true: Call a component $c$ of $f(S) \cap W_i$ \emph{ removable} if the triangles of $f(S)$
incident to the edges of $c$
lie locally to one side of $W_i$ in $M$. If $c$ is removable, then
there is an isotopy of $W_i$ supported in a neighborhood of $c$ which
removes $c$ from the intersection $f(S) \cap W_i$. Hence, if we denote
by $E_i$ the edges of $f(S) \cap W_i$ which do not lie in removable components , then $E_i$ must be nonempty for each $i$. 

We claim that for each $i$, $E_i$ shares an edge with $E_{i+1}$. Otherwise, both $E_i$ and $E_{i+1}$ consist of a single edge and
the tetrahedron corresponding to the diagonal exchange from $W_i$ to
$W_{i+1}$ has $E_i$ as its bottom edge and $E_{i+1}$ as its top
edge. But then both of these edges must be removable since pushing the bottom two faces of the tetrahedron slightly upward makes that intersection disappear, and similarly for the top. This contradicts our above observation and establishes that $E_i$ and $E_{i+1}$ have a common edge. 

We obtain a sequence in $\A(W)$,
\[
V^-_W \supset E_0 , E_1, \ldots, E_n \subset V^+_W,
\]
having the property that for each edge $e_i$ of $E_i$ there is an edge $e_{i+1}$ of $E_{i+1}$ such that $e_i$ and $e_{i+1}$ are disjoint. We conclude that the number of distinct edges in the sequence $E_0 , E_1, \ldots, E_n$ is at least $d_W(V^-_W, V^+_W)$.
Combining this with the fact that the number of edges in an ideal triangulation
of $S$ is $3|\chi(S)|$ and \Cref{lem:iso_pocket}, we see that
\[
3|\chi(S)| \ge d_W(V^-_W, V^+_W)  \ge d_W(\lambda^-,\lambda^+) - \beta,
\]
as required. 
\end{proof}

\medskip

We conclude the paper by recording the following corollary of
\Cref{lem:embedding_fullly_punctured} and the proof of
\Cref{th:sub_dichotomy_fully_punctured}. 

\begin{corollary}\label{always subsurface}
Let $M$ be a hyperbolic manifold with fully-punctured fibered face
$\FF$. Let $W$ be a subsurface of a fiber $F\in\R_+\FF$ such that
$d_W(\Lambda^+,\Lambda^-) > 4$ if $W$ is nonannular and
$d_W(\Lambda^+,\Lambda^-) > 6$ if $W$ is an annulus. If $S$ is any
fiber in $\R_+\FF$ such that $\pi_1(W) < \pi_1(S)$, then $W$ is isotopic
to a subsurface of $S$.
\end{corollary}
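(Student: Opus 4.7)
The plan is to extract the argument from the first case of the proof of \Cref{th:sub_dichotomy_fully_punctured} and verify that the weaker distance hypotheses here are exactly what \Cref{lem:embedding_fullly_punctured} needs. First, by \Cref{lem:subgroup_projection} the quantity $d_W(\Lambda^+,\Lambda^-)$ agrees with $d_W(\lambda^+,\lambda^-)$ computed in the cover of $S$ determined by the subgroup $\pi_1(W) < \pi_1(S)$; let $W'$ be a compact core of that cover. Since the hypotheses on $d_W$ match those of \Cref{lem:embedding_fullly_punctured}, I can apply that lemma in $S$ to obtain an embedded subsurface $Y \subset S$ such that $W' \to S$ is homotopic to a finite cover $W' \to Y$ of some degree $n \ge 1$. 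In particular $\pi_1(W)$ has index $n$ in a conjugate of $\pi_1(Y) \le \pi_1(S) \le \pi_1(M)$.

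Next I would run the cohomology argument. Let $\eta_F\colon \pi_1(M)\to \Z$ be the homomorphism representing the Poincar\'e dual of $F$. Because $W$ is actually an embedded subsurface of the fiber $F$, loops in $W$ can be homotoped off $F$, so $\eta_F$ vanishes on $\pi_1(W)$. Now for every $\gamma \in \pi_1(Y)$ the element $\gamma^n$ lies in $\pi_1(W)$, so $n\cdot \eta_F(\gamma) = \eta_F(\gamma^n) = 0$, and since $\Z$ is torsion-free, $\eta_F(\gamma) = 0$. Thus $\eta_F$ vanishes on $\pi_1(Y)$ as well, so $\pi_1(Y) \le \ker \eta_F = \pi_1(F)$.

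Finally, I need to promote this to $n=1$. Since $W$ is an embedded subsurface of the fiber $F$ and $\pi_1(W) \le \pi_1(Y) \le \pi_1(F)$, the subgroup $\pi_1(W)$ is the fundamental group of an embedded incompressible subsurface inside $F$. But the fundamental group of an embedded incompressible subsurface of a surface cannot be a proper finite-index subgroup of another subgroup of $\pi_1(F)$: passing to the cover of $F$ associated to $\pi_1(Y)$, the lift of $W$ would be an embedded incompressible subsurface whose fundamental group is finite-index in that of the whole cover, which forces the two to coincide. Therefore $n = 1$ and $\pi_1(W) = \pi_1(Y)$ up to conjugation in $\pi_1(F)$.

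The conclusion that $W$ is isotopic to a subsurface of $S$ then follows by the same flow argument used in \Cref{th:sub_dichotomy_fully_punctured}: lifting $W \subset F$ and $Y \subset S$ to the cover $S\times\R \to M$ (equivalently, using the identification of any fiber in $\R_+\FF$ with the leaf space of the suspension flow), the two lifts determine the same subgroup of $\pi_1(M)$ and hence can be connected by an isotopy following the flow lines, giving the isotopy $W \simeq Y \subset S$ in $M$. The only genuinely delicate step is ruling out $n > 1$, and it is handled entirely by the observation above combined with torsion-freeness of $\Z$; everything else is quoted directly from results already established in the paper.
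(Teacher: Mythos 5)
Your proposal is correct and is essentially the paper's own argument: the corollary is exactly the first case of the proof of \Cref{th:sub_dichotomy_fully_punctured} run under the weaker distance hypotheses that \Cref{lem:embedding_fullly_punctured} actually needs, namely apply that lemma in $S$ (using the fiber-independence of $d_W$ from \Cref{lem:subgroup_projection}), use the cohomology class $\eta_F$ and torsion-freeness of $\Z$ to place $\pi_1(Y)$ inside $\pi_1(F)$, force the covering degree $n=1$ because an embedded incompressible subsurface group cannot be a proper finite-index subgroup of another subgroup of $\pi_1(F)$, and finish with the isotopy along the suspension flow. One cosmetic slip: for an index-$n$ subgroup one cannot claim $\gamma^n \in \pi_1(W)$ for every $\gamma \in \pi_1(Y)$ (this fails for non-normal subgroups); instead some power $\gamma^k$ with $1 \le k \le n$ lies in $\pi_1(W)$, which is all the torsion-freeness argument requires.
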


\bibliography{pfnew.bbl}
\bibliographystyle{amsalpha}

\end{document}